\numberwithin{equation}{section}
\newtheorem{Theorem}{Theorem}[section]
\newtheorem{Corollary}[Theorem]{Corollary}
\newtheorem{Lemma}[Theorem]{Lemma}
\newtheorem{Proposition}[Theorem]{Proposition}
\theoremstyle{definition}
\newtheorem{Definition}[Theorem]{Definition}
\newcommand{\tab}{\textsf{Tab}}
\renewcommand{\S}{\mathbb{S}}
\newcommand{\ct}[2]{\textsf{CT}_{\mathbb{#1}}[#2]}
\renewcommand{\c}[1]{\textsf{CT}_{\mathbb{#1}}}
\begin{document}
\allowdisplaybreaks

\newcommand{\arXivNumber}{1909.00071}

\renewcommand{\thefootnote}{}

\renewcommand{\PaperNumber}{010}

\FirstPageHeading

\ShortArticleName{Singular Nonsymmetric Macdonald Polynomials and Quasistaircases}

\ArticleName{Singular Nonsymmetric Macdonald Polynomials\\ and Quasistaircases}

\Author{Laura COLMENAREJO~$^\dag$ and Charles F.~DUNKL~$^\ddag$}

\AuthorNameForHeading{L.~Colmenarejo and C.F.~Dunkl}

\Address{$^\dag$~Department of Mathematics and Statistics, University of Massachusetts at Amherst,\\
\hphantom{$^\dag$}~Amherst, USA}
\EmailD{\href{mailto:laura.colmenarejo.hernando@gmail.com}{laura.colmenarejo.hernando@gmail.com}}
\URLaddressD{\url{https://sites.google.com/view/l-colmenarejo/home}}

\Address{$^\ddag$~Department of Mathematics, University of Virginia, Charlottesville VA 22904-4137, USA}
\EmailD{\href{mailto:cfd5z@virginia.edu}{cfd5z@virginia.edu}}
\URLaddressD{\url{http://people.virginia.edu/~cfd5z/}}

\ArticleDates{Received September 06, 2019, in final form February 19, 2020; Published online February 27, 2020}

\Abstract{Singular nonsymmetric Macdonald polynomials are constructed by use of the representation theory of the Hecke algebras of the symmetric groups. These polynomials are labeled by quasistaircase partitions and are associated to special parameter values $(q,t)$. For $N$ variables, there are singular polynomials for any pair of positive integers $m$ and $n$, with $2\leq n\leq N$, and parameters values $(q,t)$ satisfying $q^{a}t^{b}=1$ exactly when $a=rm$ and $b=rn$, for some integer $r$. The coefficients of nonsymmetric Macdonald polynomials with respect to the basis of monomials $\big\{ x^{\alpha}\big\}$ are rational functions of $q$ and $t$. In this paper, we present the construction of subspaces of singular nonsymmetric Macdonald polynomials specialized to particular values of $(q,t)$. The key part of this construction is to show the coefficients have no poles at the special values of $(q,t)$. Moreover, this subspace of singular Macdonald polynomials for the special values of the parameters is an irreducible module for the Hecke algebra of type $A_{N-1}$.}

\Keywords{nonsymmetric Macdonald polynomials; Dunkl operators; Hecke algebra; critical pairs}

\Classification{33D52; 20C08; 33D80; 05E10}

\section{Introduction}

The Hecke algebra $\mathcal{H}_{N}(t)$ of the symmetric group $\mathcal{S}_{N}$ acting on $\{1,2,\ldots,N\}$ has representations on polynomials in $N$ variables as well as on finite-dimensional spaces spanned by reverse standard Young tableaux (RSYT) of shape~$\tau$, for each partition~$\tau$ of~$N$. Among the different polynomials related to the Hecke algebra, the nonsymmetric Macdonald polynomials are defined as homogeneous eigenvectors of the Cherednik operators.

In any structure of algebra and analysis that involves parameters, it is always crucial to know the effect of different parameter values, for instance, when shifted nonsymmetric Macdonald polynomials become homogeneous (see \cite[Proposition~2, p.~9]{DL2015}). Here we are concerned with parameters giving rise to singular nonsymmetric Macdonald polynomials. We analyze the situations where the Cherednik operators coincide with Jucys--Murphy elements of the Hecke algebra. It is remarkable that this leads directly to singular polynomials, which are defined to be in the joint kernels of Dunkl operators. We already looked at singular Macdonald polynomials in our work with Jean-Gabriel Luque in~\cite{CDL2019}, where the singular polynomials form the basic ingredient of the projection map described there.

In this paper we construct spaces of nonsymmetric Macdonald polynomials which admit a~representation isomorphic to the representation on finite-dimensional spaces spanned by RSYT for certain shape $\tau$ and parameter values $(q,t)$. Furthermore, the partitions that arise are related to quasistaircases. As a very initial example, let $N=10$ and consider the quasistaircase partition $\lambda= (4,3,3,2,2,0,0,0,0,0)$. In this case, we will look at RSYT of shape $\tau=(5,2,2,1)$ and parameter values satisfying $qt^{3}=1$.

It is known that the quasistaircases exhaust all singular nonsymmetric Jack polynomials and we suspect that this also holds for singular nonsymmetric Macdonald polynomials~\cite{D05}. By use of quasistaircases we will construct these subspaces. The idea is that, once we fix certain partition~$\tau$ and parameter values $(q,t)$, for each RSYT of shape $\tau$, denoted by~$\S$, there is a~label~$\alpha(\S)$ such that the associated set of nonsymmetric Macdonald polynomials $\{M_{\alpha(\S)}\}$ is a basis of isotype~$\tau$ and the spectral vectors satisfy that $\zeta_{\alpha(\S)}=\big[t^{\ct{S}{i}}\big]_{i=1}^{N}$.
The partition $\tau$ will be of the form $\tau=\big( dn-1,(n-1)^{K-1},\nu_{K}\big)$, for some specific parameters $d$, $n$, $K$, and $\nu_K$, and the specialization of the parameter will be of the form $\varpi=(q,t)=\big(\omega u^{-n/g},u^{m/g}\big)$, where $m$ and $n$ are integers such that $g=\gcd(m,n)$ and $u$ is not a root of unity and $u\neq0$. With this idea in mind, we state our main theorem now.

\begin{Theorem}\label{MainThm}The polynomials $\{ M_{\alpha(\S )}\colon \S \in \mathsf{Tab}_{\tau}\} $ specialized to $(q,t)=\varpi$ are a~basis of isotype $\tau$ and are singular.
\end{Theorem}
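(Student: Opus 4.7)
The plan is to prove the theorem in three coupled steps: a spectral identification, a regularity argument, and a deduction of the Hecke-module structure together with singularity from the spectra. First I would verify that, for each $\S\in\tab_\tau$, the Cherednik eigenvalue vector $\zeta_{\alpha(\S)}$ (a monomial in $q$ and $t$ in each coordinate) evaluated at $(q,t)=\varpi=\big(\omega u^{-n/g},u^{m/g}\big)$ reduces to $\big(u^{(m/g)\ct{S}{i}}\big)_{i=1}^{N}=\big(t^{\ct{S}{i}}\big)_{i=1}^{N}$. The resonance $q^{rm}t^{rn}=1$ and the quasistaircase form of the ambient partition pin down the exponents of $\alpha(\S)$ so that this identity holds on the nose; the calculation is a direct substitution, but it is the organizing principle for everything that follows, since the target vector is precisely the Jucys--Murphy spectrum on the irreducible Hecke-module of isotype $\tau$.

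The main technical obstacle, as flagged in the abstract, is showing that every coefficient of $M_{\alpha(\S)}$ in the monomial basis is regular at $(q,t)=\varpi$. These coefficients are built recursively via Knop--Sahi intertwiners, and their denominators are products of differences of spectral coordinates of intermediate compositions. A denominator vanishes at $\varpi$ precisely when the recursion crosses a \emph{critical pair} --- two compositions whose spectra collide on account of the resonance $q^{rm}t^{rn}=1$. I would enumerate all critical pairs that can arise among the compositions on the intertwiner path to $\alpha(\S)$ and for each exhibit a matching vanishing factor in the numerator (or avoid it through an alternative reduced expression). The quasistaircase hypothesis strictly constrains which critical pairs can occur, and the labeling $\alpha(\S)$ is engineered so that the harmful denominator factors are systematically cancelled. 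I expect this bookkeeping of critical pairs along the intertwiner chain to be the hardest part of the argument.

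Once regularity is in hand, linear independence follows because the specialized spectral vectors $\zeta_{\alpha(\S)}|_{\varpi}$ are pairwise distinct (distinct RSYT of shape $\tau$ have distinct content vectors) and the polynomials remain joint eigenvectors of the commuting Cherednik operators. The Hecke generators $T_i$ act on $\{M_{\alpha(\S)}\}$ through the standard intertwining formulas; verified at $\varpi$, these reduce to the Young seminormal action on $\tab_\tau$, so the span carries the $\mathcal{H}_N(t)$-irreducible of isotype $\tau$ and, by dimension count against $|\tab_\tau|$, is a basis of it. Singularity then follows by expressing each Dunkl operator $\mathcal{D}_i$ in terms of the Cherednik operator $\xi_i$, the affine shift, and the $T_j$'s: the particular spectral form $\big(t^{\ct{S}{i}}\big)$ characterizes the joint kernel of the $\mathcal{D}_i$ at these parameter values, so substitution forces $\mathcal{D}_i M_{\alpha(\S)}|_{\varpi}=0$ for every $i$ and every $\S$, completing the proof.
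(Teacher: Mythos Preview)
Your three-step architecture matches the paper's, and the spectral identification is exactly Proposition~\ref{specvecct}. But there is a genuine gap in the regularity step and in how you pass from regularity to the seminormal action.

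You propose to establish regularity of $M_{\alpha(\S)}$ for $\S\in\tab_\tau$ and then say the intertwining formulas for $T_i$ ``reduce to the Young seminormal action''. The problem is Case~(II), where $\col{S}{i}=\col{S}{i+1}$. Here $\alpha(\S)_i\neq\alpha(\S)_{i+1}$, so Proposition~\ref{PropMsiRels} expresses $M_{\alpha(\S)}T_i$ as a combination of $M_{\alpha(\S)}$ and $M_{\alpha(\S s_i)}$; the coefficient of the latter vanishes at $\varpi$ (since $\rho_i|_\varpi=t$), but $\S s_i\notin\tab_\tau$, and unless you also know $M_{\alpha(\S s_i)}$ is regular at $\varpi$ you are computing $0\times\infty$ and cannot conclude $M_{\alpha(\S)}T_i=-M_{\alpha(\S)}$. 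The paper singles this out as the crux (Section~\ref{SecProofMainThm}) and the entire machinery of Sections~\ref{Sec5}--\ref{Sec6} (the equipolar reductions to the extremal tableaux $\Theta_{j,k}$ and the critical-pair analysis for them) is devoted to proving Theorem~\ref{PropCriticalPairs}: both $M_{\alpha(\S)}$ \emph{and} $M_{\alpha(\S s_i)}$ are regular at $\varpi$. Your plan never mentions these boundary labels, so as written it cannot close Case~(II), and without Case~(II) you cannot identify the Jucys--Murphy action with the Cherednik action.

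Your regularity mechanism is also different from the paper's, and likely harder to execute. You speak of cancelling each vanishing denominator by a matching numerator factor along the intertwiner chain. The paper does not do this; it proves (Proposition~\ref{critpoles}) that $M_\alpha$ is regular at $\varpi$ whenever there is no composition $\beta$ with $\ell(\beta)\le N$ such that $(\alpha,\beta)$ is an $(m,n)$-critical pair, by building a projection operator $\mathcal{T}_\alpha$ out of the $\xi_i$. The work in Section~\ref{Sec6} then shows that for each relevant label the unique critical partner $\beta$ has $\ell(\beta)=N+j>N$, so it is invisible in $N$ variables. This sidesteps any numerator bookkeeping. Finally, your closing sentence should be sharpened: the spectral form $\big(t^{\ct{S}{i}}\big)$ does not by itself characterize the Dunkl kernel; rather, once the seminormal $T_i$-action is established, Proposition~\ref{eigenphi} gives $M_{\alpha(\S)}\phi_i=t^{\ct{S}{i}}M_{\alpha(\S)}$, which now equals $M_{\alpha(\S)}\xi_i$, and the equivalence $p\xi_i=p\phi_i\ \forall i\iff p\mathcal{D}_i=0\ \forall i$ is the content of the lemma following Definition~\ref{Def:Singular}.
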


This paper covers an explanation of all the concepts involved in Theorem~\ref{MainThm}, as well as the presentation of its proof. The presentation begins in Section~\ref{Sec2} with a concise overview of the background needed in this paper. This section includes some combinatorial definitions, together with an exposition of the representation theory of the Hecke algebra and of nonsymmetric Macdonald polynomials and singular polynomials. In Section~\ref{QUStair}, we introduce the quasistaircase partitions and the specialization that we will be considering through this paper. Section~\ref{Sec4} is dedicated to introduce the concept of the \emph{equipolar property} since it will simplify notably our study. We warn the reader that the sketch of the proof of Theorem~\ref{MainThm} is included in Section~\ref{SecProofMainThm}. The rest of the paper is dedicated to prove some technical results. In Section~\ref{Sec5}, we use the \emph{critical pair method} and we present the minimal set of configurations that need to be checked.
This is done in Section~\ref{Sec6}, where we finish our study by carefully analyzing the critical pairs for the quasistaircase partitions. Finally, we wrap up the paper with some concluding remarks and an illustrative example in Section~\ref{Sec7}.

\section{Background}\label{Sec2}

This paper relates concepts and uses notation from different areas. In this section we set up the foundations and the notation by reviewing the basic definitions and results that are involved in our study. The section is split according to the different areas.

\subsection{Combinatorics}

Let us start with the combinatorial objects. For more details, see~\cite{Macd1995, S1999}.

A \emph{partition} $\tau=(\tau_{1},\dots,\tau_{N})$ is a nonincreasing sequence such that $\tau_{i}\geq0$, for all $i$. The length of a partition $\tau$ is the number of nonzero parts of $\tau$, $\ell(\tau)=\max \{i\colon \tau_{i}>0\}$. Moreover, we say that $\tau$ is a partition of $n$, or that the size of $\tau$ is $n$, if $\sum_{i} \tau_{i}=n$. We denote by $\tau\vdash n$ or $|\tau|=n$ if $\tau$ is a partition of $n$ and by $\mathsf{Par}(n)$ the set of partitions of $n$. We consider the following \emph{partial order} on partitions. For $\tau,\gamma\in \mathsf{Par}(n)$, we say that $\tau$ dominates $\gamma$, and we write $\tau\succ\gamma$, if $\tau\neq\gamma$ and $\sum\limits_{i=1}^{j}\tau_{i} \geq\sum\limits_{i=1}^{j} \gamma_{i}$, for all $1\leq j\leq n$.

A \emph{composition} $\alpha=(\alpha_{1},\dots,\alpha_{N})$ is any permutation of a partition. We denote by $\alpha^{+}$ the unique nonincreasing rearrangement of $\alpha$ such that $\alpha^{+}$ is a partition. We say that~$\alpha$ is a~composition of $n$, or that has size $n$, if $|\alpha|=|\alpha^{+}|=n$.

The definition of the partial order on partitions applies also for compositions since it does not use that the sequences are weakly decreasing. We continue using the symbol $\succ$ for this order for compositions.
Moreover, it can be used to define another order. For $\alpha$ and $\beta$ compositions, we write $\alpha\triangleright\beta$ if $|\alpha|=|\beta|$, $\alpha\neq\beta$, and either $\alpha^{+}\succ\beta^{+}$, or $\alpha^{+}=\beta^{+}$ and $\alpha\succ\beta$.

\begin{remark*}Notice that, by definition, the partitions and compositions appearing in this paper are allowed to have zeros and are standardized to have $N$ entries in total (including the zeros). However, we omit the zero entries in those partitions for which they are not relevant. We mostly work with $\mathsf{Par} (N)$, the set of partitions $\tau=(\tau_{1},\dots,\tau_{N})$ with $\sum_{i} \tau_{i} = N$.
\end{remark*}

Given a composition $\alpha$, we associate to it a \emph{rank function} $r_{\alpha}= (r_\alpha(1),r_\alpha(2),\dots, r_\alpha(N) )$ by setting
\begin{gather}\label{Def:rankfunct}
r_{\alpha}(i)=\# \{ k \colon 1\leq k\leq N,\,\alpha_{k}>\alpha_{i} \} +\# \{ k \colon 1\leq k\leq i,\,\alpha_{k}=\alpha_{i}\},
\end{gather}
for $1\leq i\leq N$, where we use the notation $\# A$ to denote the size of the set $A$. It is important to point out that $r_{\alpha}$ is a permutation of $\{1,2,\dots,N\}$. Moreover, $r_{\alpha}=(1,2,\dots,N)$ if and only if~$\alpha$ is a partition. Therefore, $\alpha^{+}$ satisfies that $\alpha
_{r_{\alpha}(i)}^{+}=\alpha_{i}$, for $1\leq i \leq N$.

A \emph{Ferrers diagram of shape} $\tau\in\mathsf{Par}(n)$ is obtained by drawing $\tau_i$ boxes from bottom to top, all shifted to the left (corresponding to French notation). That is, we draw boxes at points $(i,j)$, for $1\leq i \leq\ell(\tau)$ and $1\leq j\leq\tau_{i}$, in the $xy$-plane. We define two fillings of a Ferrers diagram of shape $\tau\in\mathsf{Par}(n)$. A \emph{reverse standard Young tableau} (RSYT) is a filling such that the entries are exactly $\{1,2,\dots,n\}$ and are decreasing in rows and columns
when reading from left to right and from bottom to top.
A \emph{reverse row-ordered standard Young tableau} is a filling such that the entries are exactly $\{1,2,\dots,n\}$ and are decreasing in rows, with no condition on the columns. Our main objects are the RSYT, and therefore we denote by $\mathsf{Tab}_{\tau}$ the set of RSYT of shape $\tau$ and by
$V_{\tau}$ the space with orthogonal basis given by $\mathsf{Tab}_{\tau}$, i.e., $V_{\tau}=\mathrm{span}_{\mathbb{R}(t)} \{ \mathbb{S}\colon \mathbb{S\in}
\mathrm{Tab}_{\tau} \}$. We also denote by $\mathsf{RSTab}_{\tau}$ the set of reverse row-ordered standard Young tableaux of shape $\tau$. Note that $\mathsf{Tab}_{\tau}\subset
\mathsf{RSTab}_{\tau}$.

We finish this subsection introducing useful notation for the tableaux in $\mathsf{Tab}_{\tau}$. Let $\S \in\mathsf{Tab}_{\tau}$, for some partition $\tau \vdash N$. The entry $i$ of $\S$ is at \emph{coordinates} $(\mathsf{row}_{\mathbb{S}}[i],\mathsf{col}_{\mathbb{S}}[i])$, where $\mathsf{row}_\S[i]$ denotes the row in which $i$ appears (counting from bottom to top) and $\mathsf{col}_\S[i]$ denotes the column in which $i$ appears (counting from left to right). Moreover, the \emph{content} of the entry is $\mathsf{CT}_{\mathbb{S}}[i]=
\mathsf{col}_{\mathbb{S}}[i]-\mathsf{row}_{\mathbb{S}}[i]$. Then, each $\S \in\mathsf{Tab}_{\tau}$ is uniquely determined by its \emph{content vector} $\c{S} = [\mathsf{CT}_{\mathbb{S}}[i]]_{i=1}^{N}$.
For instance, $\S =\scalebox{0.7}{
\begin{tikzpicture}
\draw (0,0) rectangle (0.5,0.5);
\draw (0,0.5) rectangle (0.5,1);
\draw (0.5,0) rectangle (1,0.5);
\draw (0.5,0.5) rectangle (1,1);
\draw (1,0) rectangle (1.5,0.5);
\draw (1,0.5) rectangle (1.5,1);
\draw (1.5,0) rectangle (2,0.5);
\node at (0.25,0.25) {7};
\node at (0.75,0.25) {6};
\node at (1.25,0.25) {5};
\node at (1.75,0.25) {2};
\node at (0.25,0.75) {4};
\node at (0.75,0.75) {3};
\node at (1.25,0.75) {1};
\end{tikzpicture}}$\ has shape $\tau=(4,3)$ and content vector $\c{S}=[1,3,0,-1,2,1,0]$.

Given $\S \in\mathsf{Tab}_{\tau}$, we define $\S^{(i)}$ to be the RSYT obtained by exchanging $i$ and $i+1$ in the case that $\mathsf{row}_{\mathbb{S}}[i]<\mathsf{row}_{\mathbb{S}}[i+1]$ and $\mathsf{col}_{\mathbb{S}}[i]>\mathsf{col}_{\mathbb{S}}[i+1]$. We refer this map $\S \xrightarrow{s_i} \S^{(i)}$ as a
\emph{step}. We reserve the notation $\S s_{i}$, which again exchange $i$ and $i+1$, for the case in which $\S s_{i}$ is not a RSYT. We also set up that $d[i]=\mathsf{CT}_{\mathbb{S}}[i]-\mathsf{CT}_{\mathbb{S}}[i+1]$, since it will appear several times.

There is a partial order on $\mathsf{Tab}_{\tau}$ related to the inversion number:
\begin{gather*}
\text{inv}(\S) = \# \{ (i,j)\colon 1\leq i<j \leq N,\, \mathsf{row}_{\S}[i]<\mathsf{row}_{\S}[j] \}.
\end{gather*}
We denote by $\S_{0}$ the \emph{inv-maximal element} of $\mathsf{Tab}_{\tau}$, which has the numbers $N,N-1,\dots,1$ entered column-by-column, and by $\S_{1}$ the \emph{inv-minimal element} of $\mathsf{Tab}_{\tau}$, which has these numbers entered row-by-row.
Note that $\text{inv}(\S_{1})=0$ and that $\text{inv}
\big(\S^{(i)}\big)= \text{inv}(\S)-1$.
\begin{example*}
For the shape $(4,3)$, $\S_{0}=\scalebox{0.7}{
\begin{tikzpicture}
\draw (0,0) rectangle (0.5,0.5);
\draw (0,0.5) rectangle (0.5,1);
\draw (0.5,0) rectangle (1,0.5);
\draw (0.5,0.5) rectangle (1,1);
\draw (1,0) rectangle (1.5,0.5);
\draw (1,0.5) rectangle (1.5,1);
\draw (1.5,0) rectangle (2,0.5);
\node at (0.25,0.25) {7};
\node at (0.75,0.25) {5};
\node at (1.25,0.25) {3};
\node at (1.75,0.25) {1};
\node at (0.25,0.75) {6};
\node at (0.75,0.75) {4};
\node at (1.25,0.75) {2};
\end{tikzpicture}}$
 with $\text{inv}(\S_0)=6$, and $\S_{1}=\scalebox{0.7}{
\begin{tikzpicture}
\draw (0,0) rectangle (0.5,0.5);
\draw (0,0.5) rectangle (0.5,1);
\draw (0.5,0) rectangle (1,0.5);
\draw (0.5,0.5) rectangle (1,1);
\draw (1,0) rectangle (1.5,0.5);
\draw (1,0.5) rectangle (1.5,1);
\draw (1.5,0) rectangle (2,0.5);
\node at (0.25,0.25) {7};
\node at (0.75,0.25) {6};
\node at (1.25,0.25) {5};
\node at (1.75,0.25) {4};
\node at (0.25,0.75) {3};
\node at (0.75,0.75) {2};
\node at (1.25,0.75) {1};
\end{tikzpicture}}$ with $\text{inv}(\S_1)=0$.
\end{example*}

\subsection{The Hecke algebra and its representations}\label{SubSect:Heckealgebra}

Let $t$ be a formal parameter (or a complex number not a root of unity). The \emph{Hecke algebra}~$\mathcal{H}_{N}(t)$ is the associative algebra generated by $\{T_{1},T_{2},\ldots,T_{N-1}\}$ subject to the relations
\begin{alignat*}{3}
& (T_{i}+1)(T_{i}-t) = 0, \qquad&& \text{for} \quad 1\leq i \leq N-1,&\\
& T_{i}T_{i+1}T_{i} = T_{i+1}T_{i}T_{i+1}, \qquad && \text{for} \quad 1\leq i\leq N-2,&\\
& T_{i}T_{j} =T_{j}T_{i},\qquad && \text{for} \quad 1\leq i<j-1\leq N-2.&
\end{alignat*}

The irreducible modules of $\mathcal{H}_{N}(t)$ are indexed by partitions of $N$. In fact, there is a representation of $\mathcal{H}_{N}(t)$ on $V_{\tau}$, which we denote by $\tau$ (slight abuse of notation).

Following~\cite{DJ1986}, we describe the representation in terms of the actions of $T_{i}$ on the basis elements. For $\S \in\mathsf{Tab}_{\tau}$ and $i$, with $1\leq i <N$,
\begin{enumerate}\itemsep=0pt
\item[(I)] If $\mathsf{row}_{\mathbb{S}}[i]=\mathsf{row}_{\mathbb{S}}[i+1]$, then $\S \tau(T_{i})=t\S $.
\item[(II)] If $\mathsf{col}_{\mathbb{S}}[i]=\mathsf{col}_{\mathbb{S}}[i+1]$, then $\S \tau(T_{i})=-\S $.
\item[(III)] If $\mathsf{row}_{\mathbb{S}}[i]<\mathsf{row}_{\mathbb{S}}[i+1]$ and $\mathsf{col}_{\mathbb{S}}[i]>\mathsf{col}_{\mathbb{S}}[i+1]$, then ${\S \tau(T_{i}) = \S^{(i)}+\frac{t-1}{1-t^{-d[i]}}\S }$.
\item[(IV)] If $\mathsf{CT}_{\mathbb{S}}[i]-\mathsf{CT}_{\mathbb{S}}[i+1]\leq-2$, then $\S \tau(T_{i}) = \frac{t(t^{d[i]+1}-1)(t^{d[i]-1}-1)}{(t^{d[i]}-1)^{2}}\S^{(i)}+\frac{t^{d[i]}(t-1)}{t^{d[i]}-1}\S $.
\end{enumerate}

Observe that the last case can be obtained from Case (III) by
interchanging $\S $ and $\S^{(i)}$ and applying the relation $(\tau(T_{i})+I)(\tau(T_{i})-tI)=0$, where $I$ denotes the identity operator on $V_{\tau}$. We will refer to the formulas (I)--(IV) as the \emph{action formulas for $\tau(T_{i})$}.

Consider the following inner product on $V_{\tau}$. For $\S , \S^{\prime}\in\mathsf{Tab}_{\tau}$, $\langle\S ,\S^{\prime}\rangle_{t}=\delta_{\S ,\S^{\prime}}\cdot\gamma(\S ,t)$, with
\begin{gather*}
\gamma(\S ;t)= \prod_{\substack{i<j \\\mathsf{CT}_{\mathbb{S}}[j]-\mathsf{CT}_{\mathbb{S}}[i]\geq2}} \frac{\big(1- t^{\mathsf{CT}_{\mathbb{S}}[j]-\mathsf{CT}_{\mathbb{S}}[i]-1} \big)\big(1-t^{\mathsf{CT}_{\mathbb{S}}[j]-\mathsf{CT}_{\mathbb{S}}[i]+1}\big)} {\big(1-t^{\mathsf{CT}_{\mathbb{S}}[j]-\mathsf{CT}_{\mathbb{S}}[i]} \big)^{2}},
\end{gather*}
and extended by linearity. Note that this inner product satisfies that $\langle fT_{i},g\rangle = \langle f,gT_{i}\rangle $, for $f,g\in V_{\tau}$, and that it is invariant under the transformation ${t \longmapsto t^{-1}}$.

For $\mathcal{H}_{N}(t)$, a \emph{set of Jucys--Murphy elements} is defined by the following recursive formula:
\[
\begin{cases}
\phi_{N} = 1,\\
\phi_{i} = \dfrac{1}{t}T_{i}\phi_{i+1}T_{i},\hspace{0.5cm}\text{for }1\leq i<N.
\end{cases}
\]
In~\cite{DL2012}, there is described another set of Jucys--Murphy elements. The set described here is nicely linked to singularity and seems easier to manipulate in this setup. Next, we describe the action of this set of Jucys--Murphy elements on RSYT.

\begin{Proposition}\label{eigenphi}For $1\leq i\leq N$ and $\S \in\mathsf{Tab}_{\tau}$, $\S \tau (\phi_{i} )=t^{\mathsf{CT}_{\mathbb{S}}[i]}\S$.
\end{Proposition}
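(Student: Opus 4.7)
The plan is to prove Proposition~\ref{eigenphi} by downward induction on $i$, from $i=N$ down to $i=1$, using the recursion $\phi_i = t^{-1} T_i \phi_{i+1} T_i$ together with the action formulas (I)--(IV) for $\tau(T_i)$.

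For the base case $i=N$, note $\tau(\phi_N)=I$, so one only needs $\mathsf{CT}_\S[N]=0$. But $N$ is the largest entry of an RSYT and the entries decrease both rightward along rows and upward along columns, so $N$ must occupy the bottom-left corner $(1,1)$. Hence $\mathsf{CT}_\S[N]=1-1=0$, as required. For the inductive step, assume $\S'\tau(\phi_{i+1})=t^{\mathsf{CT}_{\S'}[i+1]}\S'$ for every $\S'\in\tab_\tau$; I then need to show $\S\tau(\phi_i)=t^{\mathsf{CT}_\S[i]}\S$. Since $\tau(\phi_{i+1})$ is diagonal in the RSYT basis, the calculation reduces to the at most two-dimensional subspace $\mathbb{R}(t)\langle\S,\S^{(i)}\rangle$ spanned by $\S$ and its $s_i$-neighbour. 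I split into the four cases from Section~\ref{SubSect:Heckealgebra} according to the relative positions of $i$ and $i+1$ in~$\S$.

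Cases~(I) and~(II) are immediate. In~(I), $i$ and $i+1$ share a row, so they are horizontal neighbours with $\mathsf{CT}_\S[i]-\mathsf{CT}_\S[i+1]=1$; since $\S\tau(T_i)=t\S$, one gets $\S\tau(\phi_i)=t^{-1}\cdot t\cdot t^{\mathsf{CT}_\S[i+1]}\cdot t\,\S=t^{\mathsf{CT}_\S[i+1]+1}\S$, matching the claim. Case~(II) is analogous, with $\S\tau(T_i)=-\S$ and $\mathsf{CT}_\S[i]-\mathsf{CT}_\S[i+1]=-1$, the two sign factors cancelling. The real work is Cases~(III)/(IV), where $\S^{(i)}$ genuinely enters. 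I note that in $\S^{(i)}$ the roles of $i,i+1$ are swapped, so $\mathsf{CT}_{\S^{(i)}}[i+1]=\mathsf{CT}_\S[i]$ and, crucially, $d_{\S^{(i)}}[i]=-d[i]$. Thus if $\S$ is in Case~(III) with $d=d[i]\ge 2$, then $\S^{(i)}$ falls into Case~(IV), and applying $T_i$, then $\phi_{i+1}$, then $T_i$ produces a $2\times 2$ computation in $\{\S,\S^{(i)}\}$ with coefficients built from $t,t^d,t^{\mathsf{CT}_\S[i]},t^{\mathsf{CT}_\S[i+1]}$.

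The main obstacle is verifying that this $2\times 2$ matrix calculation collapses correctly: the off-diagonal $\S^{(i)}$-coefficient must vanish and the $\S$-coefficient must equal $t^{1+\mathsf{CT}_\S[i]}$ (before dividing by $t$). For the off-diagonal, write $c=\frac{t-1}{1-t^{-d}}$; using $\mathsf{CT}_\S[i]-d=\mathsf{CT}_\S[i+1]$, the coefficient becomes
\[
t^{\mathsf{CT}_\S[i]}\cdot\frac{t^{-d}(t-1)}{t^{-d}-1}+c\,t^{\mathsf{CT}_\S[i+1]}=\frac{t^{\mathsf{CT}_\S[i+1]}(t-1)}{t^{-d}-1}-\frac{t^{\mathsf{CT}_\S[i+1]}(t-1)}{t^{-d}-1}=0.
\]
For the diagonal, setting $E=\mathsf{CT}_\S[i]$, the key identity to verify is
\[
t^{E+1}\bigl(t^{-d+1}-1\bigr)\bigl(t^{-d-1}-1\bigr)+(t-1)^2t^{E-d}=t^{E+1}\bigl(t^{-d}-1\bigr)^2,
\]
which expands on both sides to $t^{E+1-2d}-2t^{E-d+1}+t^{E+1}$. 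Dividing by $(t^{-d}-1)^2$ and then by the external $t^{-1}$ from $\phi_i=t^{-1}T_i\phi_{i+1}T_i$ yields $t^{E}\S=t^{\mathsf{CT}_\S[i]}\S$, completing Case~(III). Case~(IV) follows by swapping the roles of $\S$ and $\S^{(i)}$ in the same computation (or, equivalently, by invoking the Hecke relation $(\tau(T_i)+I)(\tau(T_i)-tI)=0$ as already noted after the action formulas). This exhausts all cases and closes the induction.
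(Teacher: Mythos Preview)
Your proof is correct and follows essentially the same approach as the paper: downward induction on $i$, treating Cases~(I) and~(II) directly and handling Cases~(III)--(IV) via the $2\times 2$ action on $\mathrm{span}\{\S,\S^{(i)}\}$. The only cosmetic difference is that the paper packages Cases~(III)--(IV) as a single matrix identity $\tfrac{1}{t}\mathcal{T}\Phi\mathcal{T}=\mathrm{diag}\big(t^{\mathsf{CT}_\S[i]},t^{\mathsf{CT}_\S[i+1]}\big)$ with $\varrho=t^{-d[i]}$, whereas you unwind the same computation coefficient by coefficient.
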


\begin{proof}
Arguing by induction, for $i=N$, the result is trivially true since $\mathsf{CT}_{\mathbb{S}}[N]=0$ and $\phi_{N}=1$. Now, suppose that $\S \tau(\phi_{i+1})=t^{\mathsf{CT}_{\mathbb{S}}[i+1]}\S $ for all $\S \in\mathsf{Tab}_{\tau}$. We want to prove that $\S \tau(\phi_{i})=t^{\mathsf{CT}_{\mathbb{S}}[i]}\S $. For that, we study the different cases according to the action formulas of~$\tau(T_{i})$:
\begin{enumerate}\itemsep=0pt
\item[](I)~If $\mathsf{row}_{\mathbb{S}}[i]=\mathsf{row}_{\mathbb{S}}[i+1]$, then $\S \tau(\phi_{i})=\frac{1}{t}\S \tau(T_{i})\tau(\phi_{i+1})\tau(T_{i})=t^{\mathsf{CT}_{\mathbb{S}}[i+1]+1}\S =t^{\mathsf{CT}_{\mathbb{S}}[i]}\S $.

\item[](II)~If $\mathsf{col}_{\mathbb{S}}[i]=\mathsf{col}_{\mathbb{S}}[i+1]$, then $\S \tau(\phi_{i})=\frac{1}{t}\S \tau(T_{i})\tau(\phi_{i+1})\tau(T_{i})=\frac{1}{t}t^{\mathsf{CT}_{\mathbb{S}}[i+1]}\S =t^{\mathsf{CT}_{\mathbb{S}}[i]}\S $.

\item[](III)--(IV)~We compute these two cases at the same time. Let $\mathcal{T}$ and $\Phi$ be the matrices of~$\tau(T_{i})$ and~$\tau(\phi_{i+1})$ respectively, with respect to the basis $\big[\S ,\S^{(i)}\big]$. That is,
\begin{gather*}
\mathcal{T}=
\begin{bmatrix}
-\dfrac{1-t}{1-\varrho} & 1\vspace{1mm}\\
\dfrac{(1-\varrho t)(t-\varrho)}{(1-\varrho)^{2}} & \dfrac{\varrho(
1-t)}{1-\varrho}\end{bmatrix}
, \qquad \Phi=
\begin{bmatrix}
t^{\mathsf{CT}_{\mathbb{S}}[i+1]} & 0\\
0 & t^{\mathsf{CT}_{\mathbb{S}}[i]}
\end{bmatrix},
\end{gather*}
where $\varrho=t^{\mathsf{CT}_{\mathbb{S}}[i+1] - \mathsf{CT}_{\mathbb{S}}[i]}$. A simple calculation shows that $\frac{1}{t}\mathcal{T}\Phi \mathcal{T}=
\left[\begin{smallmatrix}
t^{\mathsf{CT}_{\mathbb{S}}[i]} & 0\\
0 & t^{\mathsf{CT}_{\mathbb{S}}[i+1]}
\end{smallmatrix}\right]$.\hfill \qed
\end{enumerate}\renewcommand{\qed}{}
\end{proof}

The Hecke algebra $\mathcal{H}_{N}(t)$ also acts on polynomials. Let us denote by $\mathcal{P}$ the ring of polynomials $\mathbb{K}[x_{1},\dots,x_{N}]$, where $\mathbb{K}=\mathbb{Q}(t)$ (or $\mathbb{Q}(t,q)$ later on). We denote by~$x$ the set of variables $\{x_{1},\dots,x_{N}\}$ and, for a composition~$\alpha$, $x^{\alpha}=\prod\limits_{i=1}^{N}x_{i}^{\alpha_{i}}$ is a monomial of degree $|\alpha|$. The ring of polynomials $\mathcal{P}$ is graded and we denote by $\mathcal{P}_{n}$ the component of homogeneous polynomials of degree $n\geq0$, i.e., $\mathcal{P}_{n}$ is the span over $\mathbb{K}$ of the monomials~$x^{\alpha}$, for $\alpha$ a composition of~$n$.

We first describe the action of the transposition $s_{i}=(i,i+1)$, for $1\leq i \leq N-1$. For a~composition $\alpha$, $\alpha s_{i}$ is the composition obtained by exchanging $\alpha_{i}$ and $\alpha_{i+1}$. For a polynomial $p\in\mathcal{P}$, $p(x)s_{i} = p(xs_{i})$, that is the polynomial obtained by exchanging $x_{i}$ and $x_{i+1}$. Finally, for $1\leq i \leq N-1$, the operator $T_{i}$ acts on $p\in\mathcal{P}$ by
\begin{gather*}
p(x)T_{i}=(1-t)x_{i+1}\frac{p(x)-p (xs_{i} )}{x_{i}-x_{i+1}}+tp (xs_{i} ).
\end{gather*}
It can be shown straightforwardly that these operators satisfy the defining relations of $\mathcal{H}_{N}(t)$. Moreover, $ps_{i}=p$ if and only if $pT_{i}=tp$, and $pT_{i}=-p$ if and only if $p(x)= (tx_{i}-x_{i+1})p_{0}(x)$, where $p_{0}\in\mathcal{P}$ satisfies $p_{0}s_{i}=p_{0}$.

\begin{remark*}Note we are using the notation $T_{i}$ in some different ways. On one side, there is the abstract $T_{i}$, generator of $\mathcal{H}_{N}(t)$, for which $\tau(T_{i})$ denotes the representation as an operator on a~finite-dimensional vector space, for a given partition $\tau$. On the other side, $T_{i}$ also denotes an operator on the infinite-dimensional space of polynomials. Technically, we should denote it like~$\rho(T_{i})$ since this is another representation of $\mathcal{H}_{N}(t)$. However, one uses $T_{i}$ in both cases since the meaning is clear from the context.
\end{remark*}

Each space $\mathcal{P}_{n}$ can be completely decomposed into subspaces irreducible and invariant under the action of $T_{i}$ in $\mathcal{H}_{N}(t)$. These subspaces have bases of $\{\phi_{i}\}$-simultaneous eigenvectors (or even made up of Macdonald polynomials). Since this is one of the key points of this paper, we introduce the following concept.

\begin{Definition}
A basis $\{p_{\S }\colon \S \in\mathsf{Tab}_{\tau}\}$ of an invariant subspace of $\mathcal{P}_{n}$ is called a \emph{basis of isotype~$\tau$} if each $p_{\S }$ transforms under the action formulas for $T_{i}$ instead of $\tau(T_{i})$ (i.e., repla\-cing~$\tau(T_{i})$ by~$T_{i}$ in the action formulas).
\end{Definition}

The next result is a consequence of Proposition~\ref{eigenphi}.
\begin{Corollary}Let $\{ g_{\S }\colon \S \in\mathsf{Tab}_{\tau}\}$ be a set of polynomials that transforms under the formula actions of $\{T_{i}\}$. Then, $g_{\S }\phi_{i}=t^{\mathsf{CT}_{\mathbb{S}}[i]}g_{\S }$, for all~$\S $ and all~$i$.
\end{Corollary}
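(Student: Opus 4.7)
The plan is to observe that the proof of Proposition~\ref{eigenphi} uses only two ingredients: the recursive relation $\phi_{i}=\tfrac{1}{t}T_{i}\phi_{i+1}T_{i}$, which is an identity in $\mathcal{H}_{N}(t)$ and therefore holds in any representation, and the transformation formulas (I)--(IV) for the action of $T_{i}$ on the pair $\{\S,\S^{(i)}\}$. By hypothesis the polynomials $g_{\S}$ obey the very same transformation formulas, with the polynomial operator $T_{i}$ playing the role of $\tau(T_{i})$. So the entire calculation transfers verbatim to the family $\{g_{\S}\}$.

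Concretely, I would argue by downward induction on $i$, exactly as in the proposition. The base case $i=N$ is immediate: $\phi_{N}=1$ acts as the identity, and the largest entry $N$ of any RSYT must sit in the corner box $(1,1)$, so $\ct{S}{N}=0$ and the asserted eigenvalue $t^{\ct{S}{N}}=1$ is correct. For the inductive step, assume $g_{\S'}\phi_{i+1}=t^{\ct{S'}{i+1}}g_{\S'}$ for every $\S'\in\tab_{\tau}$.

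For the case analysis, Cases~(I) and (II) are one-dimensional: $g_{\S}T_{i}=tg_{\S}$ or $g_{\S}T_{i}=-g_{\S}$, and applying $\tfrac{1}{t}T_{i}\phi_{i+1}T_{i}$ yields $t^{\ct{S}{i+1}+1}g_{\S}$ or $t^{\ct{S}{i+1}-1}g_{\S}$ respectively; these equal $t^{\ct{S}{i}}g_{\S}$ because in Case~(I) the entries $i,i+1$ share a row so $\ct{S}{i}=\ct{S}{i+1}+1$, while in Case~(II) they share a column so $\ct{S}{i}=\ct{S}{i+1}-1$. In Cases~(III) and~(IV), the subspace spanned by $\{g_{\S},g_{\S^{(i)}}\}$ is $T_{i}$-invariant, and with respect to this ordered basis the matrices of $T_{i}$ and $\phi_{i+1}$ are exactly the matrices $\mathcal{T}$ and $\Phi$ written down in the proof of Proposition~\ref{eigenphi}; the computation $\tfrac{1}{t}\mathcal{T}\Phi\mathcal{T}=\operatorname{diag}\bigl(t^{\ct{S}{i}},t^{\ct{S}{i+1}}\bigr)$ done there gives the claim on both $g_{\S}$ and $g_{\S^{(i)}}$ simultaneously.

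There is no substantive obstacle, as the statement is essentially a formal consequence of Proposition~\ref{eigenphi}: the diagonalization argument depends only on the abstract action formulas and the algebraic identity defining $\phi_{i}$, so the eigenvalue computation is representation-independent. The only point worth emphasizing is that the hypothesis ``transforms under the action formulas for $\{T_{i}\}$'' is precisely what makes this transfer possible.
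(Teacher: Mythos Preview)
Your proposal is correct and follows the same approach as the paper, which simply records the corollary as ``a consequence of Proposition~\ref{eigenphi}'' without further argument. You have spelled out explicitly what the paper leaves implicit: the induction on $i$ and the case analysis in the proof of Proposition~\ref{eigenphi} depend only on the recursion $\phi_{i}=\tfrac{1}{t}T_{i}\phi_{i+1}T_{i}$ and the action formulas (I)--(IV), so they carry over verbatim to any family $\{g_{\S}\}$ satisfying those formulas.
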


The key point here is to figure out when a subspace can have a basis of isotype~$\tau$ made up of Macdonald polynomials, which we introduce in next section.

\subsection{Nonsymmetric Macdonald polynomials and singular polynomials}

In the literature, the different versions of the Macdonald polynomials are usually defined over the double affine Hecke algebra $\mathcal{H}_{N}(q,t)$, where $q$ and $t$ are parameters. For our purpose, it is enough to consider the Hecke algebra $\mathcal{H}_{N}(t)$ together with an extra parameter~$q$. Therefore, we work over the field $\mathbb{K}=\mathbb{Q}(q,t)$. Note that the action and representations defined in Section~\ref{SubSect:Heckealgebra} do not involve~$q$, and keep the same. Moreover, we focus our attention on the nonsymmetric Macdonald polynomials. First, we recall three families of operators~\cite{BF1997,DL2015}.

Given $p\in\mathcal{P}$, the \emph{shift operator} is defined as
\begin{gather*}
p\pi(x)=p (qx_{N},x_{1},x_{2},\ldots,x_{N-1} ).
\end{gather*}
This operator is commonly denoted by $\omega$, but we reserve that notation for the roots of unity that appear later on the paper. The \emph{Cherednik operators} are defined, for $1\leq i\leq N$, as
\begin{gather*}
\xi_{i} = t^{i-1}T_{i-1}^{-1}T_{i-2}^{-1}\cdots T_{1}^{-1}\pi T_{N-1}T_{N-2}\cdots T_{i},
\end{gather*}
where the operator $T_i^{-1} = \frac{1}{t}(T_i+1-t)$ is obtained from $T_i$.
Note that $\xi_{i}=\frac{1}{t}T_{i}\xi_{i+1}T_{i}$ and that the operators $\xi_{i}$ commute with each other. Finally, the \emph{Dunkl operators} are defined recursively by $\mathcal{D}_{N} = \frac{1}{x_{N}}(1-\xi_{N})$, and for $1\leq i \leq N-1$,
$\mathcal{D}_{i}= \frac{1}{t}T_{i}\mathcal{D}_{i+1}T_{i}$. It is a nontrivial but very useful result that $D_{i}$ maps $\mathcal{P}_{n}$ to $\mathcal{P}_{n-1}$.

For a composition $\alpha$, the \emph{nonsymmetric Macdonald polynomials} $M_{\alpha}$ are defined as the basis of simultaneous eigenfunctions for the Cherednik operators with
$\vartriangleright$-leading term $q^{\ast}t^{\ast}x^{\alpha}$, where~$q^{\ast}t^{\ast}$ denotes integer powers of $q$ and $t$, not necessarily the same. That is, for $1\leq i\leq N$,
\begin{gather*}
M_{\alpha}\xi_{i} = q^{\alpha_{i}}t^{N-r_{\alpha}(i)}M_{\alpha},
\end{gather*}
where the eigenvalues $\zeta_{\alpha}(i)=q^{\alpha_{i}}t^{N-r_{\alpha}(i)}$ form the \emph{spectral vector} $\zeta_\alpha= [\zeta_\alpha(1),\dots, \zeta_\alpha(N)]$.

The following result presents two relations that will be very
useful in our study.
\begin{Proposition}[\cite{DL2012}]\label{PropMsiRels}
Let $\rho_{i}=\frac{\zeta_{\alpha}(i+1)}{\zeta_{\alpha}(i)} =q^{\alpha_{i+1}-\alpha_{i}}t^{r_{\alpha}(i)-r_{\alpha}(i+1)}$. Then, for $1\leq i\leq N-1$,
\begin{itemize}\itemsep=0pt
\item if $\alpha_{i}<\alpha_{i+1}$, $\zeta_{\alpha s_{i}}=(\zeta_{\alpha})s_{i}$ and
\begin{gather*}
M_{\alpha}T_{i} = M_{\alpha s_{i}}-\frac{1-t}{1-\rho_{i}}M_{\alpha},\\
M_{\alpha s_{i}}T_{i} = \frac{(1-\rho_{i}t)(t-\rho_{i})}{(1-\rho_{i})^{2}}M_{\alpha}+ \frac{\rho_{i}(1-t)}{(1-\rho_{i})}M_{\alpha s_{i}},
\end{gather*}

\item if $\alpha_{i}=\alpha_{i+1}$, then $M_{\alpha}T_{i}=tM_{\alpha}$.
\end{itemize}
\end{Proposition}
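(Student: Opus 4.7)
The plan is the intertwiner method. Starting from the recursion $\xi_i = t^{-1} T_i \xi_{i+1} T_i$ and the Hecke quadratic relation $T_i^2 = (t-1)T_i + t$, a short manipulation yields
\[
T_i \xi_i = \xi_{i+1} T_i + (t-1)\xi_i, \qquad T_i \xi_{i+1} = \xi_i T_i + (1-t)\xi_i,
\]
while $T_i$ commutes with every $\xi_j$ for $j \neq i,i+1$. From these, one checks directly that the rational element
\[
\sigma_i := T_i + \frac{(1-t)\,\xi_i}{\xi_i - \xi_{i+1}}
\]
satisfies the exact intertwining identities $\sigma_i \xi_i = \xi_{i+1}\sigma_i$, $\sigma_i \xi_{i+1} = \xi_i \sigma_i$, and $\sigma_i \xi_j = \xi_j \sigma_i$ for $j \neq i,i+1$.

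For the case $\alpha_i < \alpha_{i+1}$, inspection of the rank function~\eqref{Def:rankfunct} shows that interchanging two strictly-ordered parts also interchanges $r_\alpha(i)$ and $r_\alpha(i+1)$, so $\zeta_{\alpha s_i} = (\zeta_\alpha)s_i$ and $\rho_i \neq 1$. The intertwining property then makes $M_\alpha \sigma_i$ a simultaneous $\xi_j$-eigenfunction with spectral vector $\zeta_{\alpha s_i}$, hence a scalar multiple of $M_{\alpha s_i}$. Evaluating the rational factor on the eigenvector $M_\alpha$ gives
\[
M_\alpha \sigma_i \;=\; M_\alpha T_i + \frac{1-t}{1-\rho_i}\,M_\alpha \;=\; c\,M_{\alpha s_i},
\]
and a leading-term comparison in the order $\triangleright$ pins $c = 1$: the polynomial-level formula for $T_i$ sends the leading monomial $x^\alpha$ of $M_\alpha$ to $x^{\alpha s_i}$ with coefficient $1$ (since $\alpha_i < \alpha_{i+1}$), matching the leading term of $M_{\alpha s_i}$. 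Solving for $M_\alpha T_i$ yields the first formula. For the second, apply $T_i$ once more, use $T_i^2 = (t-1)T_i + t$ to rewrite the left-hand side, substitute back the formula just derived, and isolate $M_{\alpha s_i} T_i$; the coefficients collapse to those stated.

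For the case $\alpha_i = \alpha_{i+1}$, we have $\alpha s_i = \alpha$, so the joint eigenspace argument (using that $T_i$ commutes with the symmetric functions of $\xi_i, \xi_{i+1}$ and with every other $\xi_j$) forces $M_\alpha T_i \in \mathbb{K}\,M_\alpha$, and the Hecke quadratic relation restricts the scalar to $\{t,-1\}$. Applying the polynomial-level formula for $T_i$ directly to $x^\alpha$ gives $x^\alpha T_i = t\,x^\alpha$, since $x^\alpha s_i = x^\alpha$ in this case; the leading-term comparison then selects the eigenvalue $t$.

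The only non-routine step I anticipate is the normalization $c = 1$: it requires verifying that no lower-order monomial in the triangular expansion $M_\alpha = x^\alpha + \sum_{\beta \triangleleft \alpha} c_\beta x^\beta$ produces an uncancelled copy of $x^{\alpha s_i}$ after $T_i$ is applied. This rests on the compatibility of the order $\triangleright$ with the polynomial action of $T_i$, which is a standard but technical feature of the nonsymmetric Macdonald theory.
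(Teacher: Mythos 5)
The paper does not actually prove this proposition; it is imported from \cite{DL2012}, so you are reconstructing an external argument. Your skeleton is the standard intertwiner method and most of it is sound: the relations $T_i\xi_i=\xi_{i+1}T_i+(t-1)\xi_i$ and $T_i\xi_{i+1}=\xi_iT_i+(1-t)\xi_i$ do follow from $\xi_i=t^{-1}T_i\xi_{i+1}T_i$ and the quadratic relation; $\sigma_i$ intertwines as claimed; swapping two strictly ordered parts swaps the ranks, so $\zeta_{\alpha s_i}=(\zeta_\alpha)s_i$ and $M_\alpha\sigma_i$ is a $\zeta_{\alpha s_i}$-eigenfunction; the passage from the first displayed identity to the second via $T_i^2=(t-1)T_i+t$ is a correct computation; and the $\alpha_i=\alpha_{i+1}$ case is handled correctly, since no composition $\beta$ with $\beta_i=\beta_{i+1}$ can have $r_\beta(i)>r_\beta(i+1)$, so the relevant joint eigenspace is one-dimensional.

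The gap is exactly in the normalization step you flagged, and it is a real error, not just a technicality. For $\alpha_i<\alpha_{i+1}$ the coefficient of $x^{\alpha s_i}$ in $x^\alpha T_i$ is $t$, not $1$: the divided-difference part of the action contributes only $x^\alpha$ and monomials strictly between $\alpha$ and $\alpha s_i$ in the order $\vartriangleright$, so the $x^{\alpha s_i}$ term comes entirely from $t\,p(xs_i)$ (for example $x_2T_1=tx_1+(t-1)x_2$). Moreover the paper's $M_\alpha$ are not monic; by Proposition~\ref{Prop:Acoefficients} the leading coefficient is $q^{\ast}t^{\ast}$. With your monic normalization $M_\alpha=x^\alpha+\cdots$ the leading-term comparison actually yields $c=t$, i.e.\ $M_\alpha T_i=tM_{\alpha s_i}-\frac{1-t}{1-\rho_i}M_\alpha$, which is not the stated identity and would also change the coefficient of $M_\alpha$ in the second formula to $\frac{(1-\rho_it)(t-\rho_i)}{t(1-\rho_i)^2}$. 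The constant $c=1$ is not forced by the eigenvalue argument; it encodes the Yang--Baxter-graph normalization of \cite{DL2012}, under which the leading coefficients satisfy $c_{\alpha s_i}=t\,c_\alpha$ --- indeed, in that construction the first displayed formula is essentially the definition of $M_{\alpha s_i}$ from $M_\alpha$. To close the gap you must either adopt that normalization explicitly or prove the relation $c_{\alpha s_i}=t\,c_\alpha$ between leading coefficients; as written, your argument establishes the identities with the wrong constants.
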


The next result presents an expansion of the nonsymmetric Macdonald polynomial emphasizing its leading term.
\begin{Proposition}[\cite{DL2012}] \label{Prop:Acoefficients}
The nonsymmetric Macdonald polynomials are of the form:
\[
M_{\alpha}(x)=q^{\ast}t^{\ast}x^{\alpha}+ \sum_{\alpha \vartriangleright\beta}A_{\alpha,\beta}(q,t)x^{\beta},
\]
where the coefficients $A_{\alpha,\beta}(q,t)$ are rational functions of~$q$ and~$t$ and whose denominators are of the form $\big(1-q^{a}t^{b}\big)$.
\end{Proposition}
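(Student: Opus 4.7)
The plan is to build $M_\alpha$ by a triangular change of basis from the monomials, using the $\vartriangleright$-triangular action of the Cherednik operators. The heart of the argument is the lemma that for every composition $\alpha$ and every $i$,
\[
x^\alpha\, \xi_i = \zeta_\alpha(i)\, x^\alpha + \sum_{\alpha\vartriangleright\gamma} b_{i,\alpha,\gamma}(q,t)\, x^\gamma,
\qquad b_{i,\alpha,\gamma}\in\mathbb{Z}\bigl[q^{\pm 1},t^{\pm 1}\bigr].
\]
Once this is in hand, the theorem reduces to linear algebra on the finite-dimensional space $\mathrm{span}\{x^\beta:|\beta|=|\alpha|\}$.

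To prove the triangularity lemma I would start with $i=N$, where $\xi_N=t^{N-1}T_{N-1}^{-1}\cdots T_1^{-1}\pi$. The shift $\pi$ acts as the cyclic permutation $x^\alpha\pi = q^{\alpha_1} x^{(\alpha_2,\dots,\alpha_N,\alpha_1)}$, and the explicit divided-difference action of each $T_j$ (together with its polynomial inverse $T_j^{-1}=t^{-1}(T_j+1-t)$) produces only $\vartriangleright$-smaller, Laurent-polynomial corrections to its input monomial. Bookkeeping of the leading monomial yields the eigenvalue $\zeta_\alpha(N)$, and the recursion $\xi_i=t^{-1}T_i\xi_{i+1}T_i$ then extends the statement to every $i$. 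A second ingredient needed is separation of spectral vectors: from $\zeta_\alpha(i)=q^{\alpha_i}t^{N-r_\alpha(i)}$ with $q,t$ algebraically independent one reads off both $\alpha_i$ and $r_\alpha(i)$, so $\zeta_\alpha=\zeta_\beta$ forces $\alpha=\beta$.

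Given these two facts, write $M_\alpha = c\,x^\alpha + \sum_{\alpha\vartriangleright\beta} A_{\alpha,\beta}\, x^\beta$ and impose $M_\alpha\,\xi_i=\zeta_\alpha(i) M_\alpha$ for all $i$. Processing the indices $\beta$ in decreasing $\vartriangleright$-order, each $A_{\alpha,\beta}$ is determined by a scalar equation of the form $(\zeta_\alpha(i)-\zeta_\beta(i))A_{\alpha,\beta} = (\text{previously computed data})$ for any $i$ with $\zeta_\alpha(i)\ne\zeta_\beta(i)$, which exists by separation. Since
\[
\zeta_\alpha(i)-\zeta_\beta(i) = q^{\alpha_i}t^{N-r_\alpha(i)}\bigl(1-q^{\beta_i-\alpha_i}t^{r_\alpha(i)-r_\beta(i)}\bigr),
\]
and the monomial prefactor is a unit in $\mathbb{Q}(q,t)$, the only new denominator introduced at each step is a factor of the form $1-q^a t^b$; induction over $\vartriangleright$ preserves this denominator structure for all coefficients appearing in $M_\alpha$. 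The main obstacle — the place where I would concentrate most care — is the triangularity lemma itself: one must verify that the rational divided-difference part of each $T_j$ actually produces corrections that are simultaneously Laurent-polynomial in $(q,t)$ and $\vartriangleright$-strictly smaller, and that these properties survive the long chain of conjugations and inverses defining $\xi_i$.
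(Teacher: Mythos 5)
The paper does not actually prove this proposition; it is imported by citation from [DL2012], so there is no in-text proof to compare against. Your argument is the standard one and is essentially sound: triangularity of the $\xi_i$ with Laurent-polynomial off-diagonal entries, separation of spectral vectors for algebraically independent $(q,t)$, and a recursive solve in $\vartriangleright$-decreasing order that introduces only denominators $1-q^{a}t^{b}$ (note the statement should be read as saying the denominators are \emph{products} of such factors, which is exactly what your induction produces). The closest analogue inside the paper is the proof of Proposition~\ref{critpoles}, which packages the same idea into the explicit intertwiner
\begin{gather*}
\mathcal{T}_{\alpha}=\prod_{\beta\vartriangleleft\alpha}\frac{\xi_{i[\beta]}-\zeta_{\beta}(i[\beta])}{\zeta_{\alpha}(i[\beta])-\zeta_{\beta}(i[\beta])},\qquad x^{\alpha}\mathcal{T}_{\alpha}=b_{\alpha,\alpha}(q,t)M_{\alpha},
\end{gather*}
so that the denominators are read off at once from the prefactors $\zeta_{\alpha}(i[\beta])-\zeta_{\beta}(i[\beta])=q^{\ast}t^{\ast}\big(1-q^{a}t^{b}\big)$. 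One point you should make explicit: solving for each $A_{\alpha,\beta}$ using a single chosen index $i$ with $\zeta_{\alpha}(i)\neq\zeta_{\beta}(i)$ produces a candidate polynomial, and you still need that it satisfies the eigenvalue equations for \emph{all} $i$. This is not automatic from the coefficient recursion alone; it follows either because $M_{\alpha}$ is already defined as the joint eigenfunction (so the scalar equations are necessary conditions that pin down its coefficients), or from commutativity of the $\xi_i$ together with distinctness of the spectral vectors on the invariant space $\mathrm{span}\{x^{\beta}\colon \alpha\trianglerighteq\beta\}$, which is exactly what the projection-operator formulation handles cleanly. With that remark added, and with the triangularity lemma carried out as you indicate, the proof is complete.
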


\begin{remark*}Usually in the literature, the nonsymmetric Macdonald polynomials are normalized so its leading coefficient is~1. In our presentation, we consider the construction of the nonsymmetric Macdonald polynomials that uses the Yang--Baxter graph~\cite{DL2011}, and so the multiples of $t$ and $q$ in the leading coefficient come from the raising operator.
\end{remark*}
We say that the parameters $(q,t)$ are \emph{generic parameters} if $q\neq1$ and $q^{a}t^{b}\neq1$, for $a,b\in\mathbb{Z}$ with $|b|\leq N$ and $|a|+|b|>0$.

Singular polynomials appear as a tool used to construct projection maps for vector-valued Macdonald polynomials and to find factorizations connected with highest weight symmetric polynomials~\cite{CDL2019}. In the most general setting, a polynomial $p\in\mathcal{P}$ is said to be \emph{singular} if there exist some specialization of~$(q,t)$ for which $p\xi_{i}=p\phi_{i}$, for all $1\leq i\leq N$. When it comes to nonsymmetric Macdonald polynomials, we have the following equivalent definition.

\begin{Definition}\label{Def:Singular}
A nonsymmetric Macdonald polynomial $M_{\alpha}$ is said to be \emph{singular} for a specific value of $(q,t)$ if the coefficients $A_{\alpha,\beta}(q,t)$ of $M_{\alpha}$ have no poles at $(q,t)$ and $M_{\alpha}\mathcal{D}_{i}=0$, for $1\leq i\leq N$.
\end{Definition}

This formulation of singularity is closely related to the problem of when a shifted (nonhomogeneous) Macdonald polynomial reduces to a homogeneous one (see~\cite[Proposition~2, p.~271]{DL2015}).

The following result shows why the definition for singular polynomials in general coincides with Definition~\ref{Def:Singular}.
\begin{Lemma}Let $p$ be a polynomial and $(q,t)$ be some fixed value. Then $p\xi_{i}=p\phi_{i}$ for all $i$ if and only if $p\mathcal{D}_{i}=0$ for all $i$.
\end{Lemma}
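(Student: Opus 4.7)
The plan is to prove the stronger pointwise identity $\ker \mathcal{D}_i = \ker(\xi_i - \phi_i)$ on $\mathcal{P}$ for every $i \in \{1,\ldots,N\}$ individually, and then obtain the lemma by intersecting these equalities over $i$. I will carry this out by downward induction on $i$, exploiting the fact that $\mathcal{D}_i$ and $\xi_i - \phi_i$ are built by the same conjugation recursion from a common base case at $i = N$.

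For the base case $i = N$, the plan is to use $\phi_N = 1$ together with the defining identity $\mathcal{D}_N = \frac{1}{x_N}(1-\xi_N)$. Rewriting this gives $x_N(p\mathcal{D}_N) = p(1 - \xi_N) = p(\phi_N - \xi_N)$, and since multiplication by $x_N$ is injective on the polynomial ring $\mathcal{P}$, we conclude $p\mathcal{D}_N = 0$ if and only if $p(\xi_N - \phi_N) = 0$.

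For the inductive step, the key observation is that $\xi_i - \phi_i$ obeys the same conjugation-by-$T_i$ recursion as $\mathcal{D}_i$: subtracting the given recursions $\xi_i = \frac{1}{t}T_i\xi_{i+1}T_i$ and $\phi_i = \frac{1}{t}T_i\phi_{i+1}T_i$ yields $\xi_i - \phi_i = \frac{1}{t}T_i(\xi_{i+1} - \phi_{i+1})T_i$, mirroring $\mathcal{D}_i = \frac{1}{t}T_i\mathcal{D}_{i+1}T_i$. Because $t \neq 0$, the operator $T_i$ is invertible on $\mathcal{P}$ (via $T_i^{-1} = \frac{1}{t}(T_i + 1 - t)$), so right-multiplication by $T_i$ is a bijection on $\mathcal{P}$. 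Consequently the conditions $p\mathcal{D}_i = 0$ and $p(\xi_i - \phi_i) = 0$ are equivalent respectively to $pT_i \in \ker \mathcal{D}_{i+1}$ and $pT_i \in \ker(\xi_{i+1} - \phi_{i+1})$, and by the inductive hypothesis these two target kernels coincide, yielding $\ker \mathcal{D}_i = \ker(\xi_i - \phi_i)$.

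I do not expect a substantive obstacle: the argument reduces to the observation that the two families propagate in lockstep under the same linear recursion, and the only small technical points are the injectivity of $x_N$-multiplication (used in the base case) and the invertibility of $T_i$ (used in the inductive step), both of which are standard under the assumption $t \neq 0$.
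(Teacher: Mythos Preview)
Your proposal is correct and follows essentially the same approach as the paper: downward induction on $i$, with the base case at $i=N$ using $\phi_N=1$ and $\mathcal{D}_N=\frac{1}{x_N}(1-\xi_N)$, and the inductive step using that both $\mathcal{D}_i$ and $\xi_i-\phi_i$ satisfy the conjugation recursion $\frac{1}{t}T_i(\cdot)T_i$ with $T_i$ invertible. You are slightly more explicit than the paper in articulating the stronger pointwise claim $\ker\mathcal{D}_i=\ker(\xi_i-\phi_i)$ and in flagging the injectivity of $x_N$-multiplication, but the argument is the same.
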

\begin{proof}The definition of $\mathcal{D}_{N}$ shows that $p\mathcal{D}_{N}=0$ if and only if $p\xi_{N}=p=p\phi_{N}$ (since $\phi_{N}=1$). Arguing by induction, suppose the statement holds for $k<i\leq N$. Then,
\begin{align*}
p\mathcal{D}_{k} =0&\iff pT_{k}\mathcal{D}_{k+1}T_{k}=0\iff pT_{k}\mathcal{D}_{k+1}=0 \iff pT_{k}\xi_{k+1}=pT_{k}\phi_{k+1} \\
&\iff pT_{k}\xi_{k+1}T_{k}=pT_{k}\phi_{k+1}T_{k} \iff tp\xi_{k}=tp\phi_{k}.
\end{align*}
This completes the induction.
\end{proof}

We finish this section with an example that illustrates the setup presented.
\begin{example*}Consider the isotype $\tau=(3,1)$ and the special value $qt^{2}=-1$. There are three RSYTs of shape~$\tau$, together with their content and their $\alpha(\S)$-label:
\begin{gather*}
\begin{array}{@{}ccccccc}
& \hspace{1cm}
&\begin{tikzpicture}
\draw (0,0) rectangle (0.5,0.5);
\draw (0,0.5) rectangle (0.5,1);
\draw (0.5,0) rectangle (1,0.5);
\draw (1,0) rectangle (1.5,0.5);
\node at (0.25,0.25) {4};
\node at (0.75,0.25) {3};
\node at (1.25,0.25) {2};
\node at (0.25,0.75) {1};
\end{tikzpicture} & \hspace{2cm} &
\begin{tikzpicture}
\draw (0,0) rectangle (0.5,0.5);
\draw (0,0.5) rectangle (0.5,1);
\draw (0.5,0) rectangle (1,0.5);
\draw (1,0) rectangle (1.5,0.5);
\node at (0.25,0.25) {4};
\node at (0.75,0.25) {3};
\node at (1.25,0.25) {1};
\node at (0.25,0.75) {2};
\end{tikzpicture} & \hspace{2cm} &
\begin{tikzpicture}
\draw (0,0) rectangle (0.5,0.5);
\draw (0,0.5) rectangle (0.5,1);
\draw (0.5,0) rectangle (1,0.5);
\draw (1,0) rectangle (1.5,0.5);
\node at (0.25,0.25) {4};
\node at (0.75,0.25) {2};
\node at (1.25,0.25) {1};
\node at (0.25,0.75) {3};
\end{tikzpicture} \\[0.1in]
\text{Content} & & [-1,2,1,0] & &
[2,-1,1,0] & & [2,1,-1,0] \\[0.07in]
\alpha(\S)\text{-labels} & & (2,0,0,0)
& & (0,2,0,0) & & (0,0,2,0)
\end{array}
\end{gather*}
The spectral vector for $(2,0,0,0)$ is $\big[q^{2}t^{3},t^{2},t,1\big]$, which equals $\big[t^{-1},t^{2},t,1\big]$ when $q^{2}=t^{-4}$. Similar relations hold for $(0,2,0,0)$ and $(0,0,2,0)$. The polynomials $M_{2000}$, $M_{0020}$, and $M_{0020}$ are indeed singular and one need only to show that none of $M_{2000}$, $M_{0200}$, $M_{0020}$, and $M_{0002}$ have poles\footnote{We use the term \emph{poles} to mean the one-dimensional varieties in the $(q,t)$-space $\mathbb{C}^{2}$ defined by the denominators of rational functions of $(q,t)$.} at $qt^{2}=-1$ (an easy computation). Then, $M_{0020}T_{3}=-M_{0020}$ when $qt^{2}=-1$ which follows from the general formula (see Proposition~\ref{PropMsiRels})
\[
M_{0020}T_{3}=\frac{q^{2}t^{3}(1-t)}{1-q^{2}t^{3}}M_{0020}+\frac{t\big(1-q^{2}t^{2}\big) \big(1-q^{2}t^{4}\big) }{\big(1-q^{2}t^{3}\big)^{2}}M_{0002},
\]
where we notice that the coefficients reduce to $-1$ and $0$ when specialized to $q=-t^{-2}$.
\end{example*}

\section{The quasistaircase partitions and the specialization}\label{QUStair}

The nonsymmetric Macdonald polynomials indexed by the quasistaircase and specialized to a~family of parameters are our main object of study in this paper. The quasistaircase partitions can be seen as a generalization of the staircase partitions, which are, in turn, a generalization of the rectangle, which have been studied before. The formula for the specialization of the Jack and Macdonald polynomials in connection with quasistaircases was introduced by Jolicoeur and Luque~\cite{JL2011}. Moreover, it lead to a collaboration between two of the authors of this paper \cite[Section~8]{DL2015}, in which they study the rectangular singular polynomials. Furthermore, it provides another link between nonsymmetric and symmetric polynomials.

In this section, we introduce both the quasistaircase partitions and the specialization, together with useful notation and properties.

The \emph{quasistaircase partition} associated to the parameters $m$, $n$, $d$, $K$, $N$ is the partition
\begin{gather}\label{Eq:quasistaircase}
\lambda=\big(((d+K-1)m)^{\nu_{K}},((d+K-2)m)^{n-1},\dots,(dm)^{n-1} ,0^{dn-1}\big),
\end{gather}
where $\nu_{K}=N-(dn-1)-(K-1)(n-1)$, so that $1\leq\nu_{K}\leq n-1$ and $\lambda$ has $N$ entries in total (including the zero entries).

From now on, $\lambda$ refers to a quasistaircase partitions with the parameters described in~\eqref{Eq:quasistaircase}, unless specified otherwise. We also associate to $\lambda$ two other partitions and a permutation of itself.

\begin{Definition}\label{DefTauNu} Let $\lambda$ be a quasistaircase partition. The \emph{isotype partition} associated to $\lambda$ is the partition defined by $\tau= \big(dn-1, (n-1 )^{K-1},\nu_{K}\big)$, which is a partition of $N$ with length $\ell(\tau)=K+1$. We also define another partition $\nu=(\nu_{0},\nu_{1},\dots,\nu_{K+1})$ recursively by taking $\nu_{1}=N-(dn-1)$, and $\nu_{j+1} = \nu_{j} - (n-1)$, for $1\leq j \leq K-1$. For consistency, we take $\nu_{0}=N$ and $\nu_{K+1} = 0$. Attached to this partition, we consider the intervals of integers given by $I_{j} = [\nu_{j}+1,\nu_{j-1}]$, for $1\leq j \leq K+1$. Intervals are a key object in our study and so, from now on, we denote by $[a,b]$ the interval of integers $[a,b]\cap\mathbb{Z}$.
\end{Definition}
Observe that if $i\in I_{1}$, then $\lambda_{i}=0$, and if $i\in I_{j}$, then $\lambda_{i} = (d+j-2)m$, for $2\leq j\leq K+1$. We also note that $\nu_{a}-\nu_{b}=(n-1)(b-a)$, for $1\leq a,b\leq K$.

\begin{example*}
Consider $\lambda=\big(30^{3},0^{11}\big)$, for which $N=14$, $n=12$, $m=30$, $d=1$, and $K=1$. Therefore, following the definitions above, $\tau=(11,3)$ and $\nu=(14,3,0)$. Moreover, we have two intervals in this case, $I_{1}=[4,14]$ and $I_{2}=[1,3]$.
\end{example*}

\begin{Definition}\label{DefAlpha} For $\S \in\mathsf{RSTab}_{\tau}$, we define a permutation $\alpha(\S) $ of $\lambda$ by setting its entries as
\begin{gather*}
\alpha(\S )_{i}=
\begin{cases}
 (d+\mathsf{row}_{\mathbb{S}}[i]-2 )m, & \text{if }\mathsf{row}_{\mathbb{S}}[i]>1,\\
0, & \text{if } \mathsf{row}_{\mathbb{S}}[i]=1.
\end{cases}
\end{gather*}
\end{Definition}

Note that for $\S_{1}$, $\alpha(\S_1)=\lambda$.
\begin{Lemma}For $\S \in\mathsf{RSTab}_{\tau}$ and $1\leq i\leq N$, the rank function associated to $\alpha(\S )$ is
\begin{gather*}
r_{\alpha(\S )}(i)=
\begin{cases}
\displaystyle \sum_{u=\mathsf{row}_{\mathbb{S}}[i]}^{K+1} \tau_{u} -\mathsf{col}_{\mathbb{S}}[i]+1, & \text{if } \mathsf{row}_{\mathbb{S}}[i]>1,\vspace{1mm}\\
N+1-\mathsf{col}_{\mathbb{S}}[i], & \text{if } \mathsf{row}_{\mathbb{S}}[i]=1.
\end{cases}
\end{gather*}
\end{Lemma}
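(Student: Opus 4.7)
The plan is to unwind the two counting terms in the definition \eqref{Def:rankfunct} of $r_\alpha$ applied to $\alpha(\S)$, using two structural facts about the construction of $\alpha(\S)$: first, the value $\alpha(\S)_k$ depends only on $\mathsf{row}_\S[k]$, and second, the rows of the RSTab $\S$ are strictly decreasing from left to right. Since the map $r \mapsto (d+r-2)m$ for $r\geq 2$ and $1 \mapsto 0$ is strictly increasing in $r$ (as $m \geq 1$), the relation $\alpha(\S)_k > \alpha(\S)_i$ holds if and only if $\mathsf{row}_\S[k] > \mathsf{row}_\S[i]$, and similarly $\alpha(\S)_k = \alpha(\S)_i$ if and only if $\mathsf{row}_\S[k] = \mathsf{row}_\S[i]$. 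This reduces the rank function to a purely tableau-theoretic count.

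For the first count, the number of $k$ with $\alpha(\S)_k > \alpha(\S)_i$ equals the total number of boxes of $\tau$ strictly above row $\mathsf{row}_\S[i]$, which is $\sum_{u=\mathsf{row}_\S[i]+1}^{K+1} \tau_u$. For the second count, I want the number of $k \leq i$ lying in the same row as $i$. Because $\S \in \mathsf{RSTab}_\tau$, the entries in row $\mathsf{row}_\S[i]$ strictly decrease reading left to right; the entries to the left of column $\mathsf{col}_\S[i]$ are all $> i$, while $i$ itself and the entries to its right are all $\leq i$. Hence the count is $\tau_{\mathsf{row}_\S[i]} - \mathsf{col}_\S[i] + 1$.

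Adding these contributions and combining the two sums into $\sum_{u=\mathsf{row}_\S[i]}^{K+1}\tau_u - \mathsf{col}_\S[i] + 1$ gives the formula in the case $\mathsf{row}_\S[i]>1$. For the case $\mathsf{row}_\S[i]=1$ I observe that $\alpha(\S)_i = 0$ is the minimum value, so the first count becomes $\sum_{u=2}^{K+1}\tau_u = N - \tau_1$, the second count becomes $\tau_1 - \mathsf{col}_\S[i] + 1$ by the same left-to-right decreasing argument within row $1$, and the sum telescopes to $N + 1 - \mathsf{col}_\S[i]$.

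There is no real obstacle here: the argument is a direct bookkeeping from the definitions once one notes that in a reverse row-ordered tableau the decreasing property in each row is exactly what converts the ``$k \leq i$ and same value'' condition into a column count. The only point requiring a little care is the convention that the second summand in \eqref{Def:rankfunct} includes $k=i$ itself, which is what produces the ``$+1$'' in both displayed expressions.
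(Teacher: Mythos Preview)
Your proof is correct and follows essentially the same approach as the paper's own argument: both reduce the two counts in \eqref{Def:rankfunct} to row-based counts by observing that $\alpha(\S)_k$ depends only on $\mathsf{row}_\S[k]$ and is strictly increasing in that row index, and then use the row-decreasing property of $\S\in\mathsf{RSTab}_\tau$ to identify $\{k\leq i:\alpha(\S)_k=\alpha(\S)_i\}$ with the cells of row $\mathsf{row}_\S[i]$ at columns $\geq \mathsf{col}_\S[i]$. Your write-up is in fact a bit more explicit than the paper's about why the monotonicity and the row-decreasing condition are exactly what is needed.
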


\begin{proof} If $\mathsf{row}_{\mathbb{S}}[i]=1$, then the entries at positions $(1,\mathsf{col}_{\mathbb{S}}[i])$, $(1,\mathsf{col}_{\mathbb{S}}[i]+1),\dots,(1,nd-1)$ are equal to $\alpha(\S )_{i}$ and the entries in the rest of rows are greater. Thus,
\begin{gather*}
{r_{\alpha(\S )}(i)=nd-1-(\mathsf{col}_{\mathbb{S}}[i]-1)+\sum_{u=2}^{K+1}\tau_{u}=N+1-\mathsf{col}_{\mathbb{S}}[i]}.
\end{gather*}

If $\mathsf{row}_{\mathbb{S}}[i]\geq2$, there are exactly $\tau_{\mathsf{row}_{\mathbb{S}}[i]}-\mathsf{col}_{\mathbb{S}}[i]+1$ parts of $\alpha(\S )$ equal to $\alpha(\S )_{i}$ and $ \sum\limits_{u=\mathsf{row}_{\mathbb{S}}[i]+1}^{K+1} \tau_{u}$ parts that are greater than $\alpha(\S )_{i}$. Therefore, $ r_{\alpha(\S )}(i) =\sum\limits_{u=\mathsf{row}_{\mathbb{S}}[i]}^{K+1}\tau_{u}-\mathsf{col}_{\mathbb{S}}[i]+1$.
\end{proof}

Now that the family of partitions is described, we look at the parameters $q$ and $t$ and specialize them.

\begin{Definition}Consider two integers $m$ and $n$ such that $m\geq1$ and $2\leq n\leq N$. Let $g=\gcd(m,n)$ and $\omega\in\mathbb{C}$ be such that $\omega^{m/g}$ is a primitive $g^{\text{th}}$ root of unity, i.e., $ \omega=\exp\big(\frac{2\pi\mathrm{i}k}{m}\big)$ with $\gcd (k,g )=1$. Define the following \emph{specialization of the parameters} $q$ and~$t$: $\varpi=(q,t)=\big(\omega u^{-n/g},u^{m/g}\big)$ where $u$ is not a root of unity and~$u\neq0$.
\end{Definition}
For the rest of the paper, $F(q,t)|_{\varpi}$ denotes the specialization of $F(q,t)$ in $\varpi$.
Note that $(q,t)=\varpi$ implies $q^{m}t^{n}=1$. In fact, we have the following result.

\begin{Lemma}\label{qmtn}
If there exist integers $a,b$ such that $q^{a}t^{b}\big|_{\varpi} =1$, then there exists $p\in\mathbb{Z}$ such that $a=pm$ and $b=pn$.
\end{Lemma}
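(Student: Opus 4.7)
The plan is to substitute the specialization directly and separate the root-of-unity contribution from the generic (non root-of-unity) contribution coming from $u$, then use the coprimality $\gcd(m/g,n/g)=1$ together with $\gcd(k,g)=1$ in two successive divisibility steps.

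First I would compute
\[
q^{a}t^{b}\big|_{\varpi} = \bigl(\omega u^{-n/g}\bigr)^{a}\bigl(u^{m/g}\bigr)^{b} = \omega^{a}\,u^{(bm-an)/g}.
\]
Setting this equal to $1$ and using that $\omega^{a}$ is a root of unity while $u$ is not (and is nonzero), I would conclude that the exponent of $u$ must vanish, i.e., $bm=an$, and separately $\omega^{a}=1$. The first equation is the ``continuous'' content and the second equation is the ``discrete'' content of the hypothesis.

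Next I would exploit $bm=an$. Dividing by $g$ gives $b(m/g)=a(n/g)$, and since $\gcd(m/g,n/g)=1$, it follows that $m/g\mid a$ and $n/g\mid b$. Writing $a=p(m/g)$, the relation $bm=an$ forces $b=p(n/g)$ with the same integer $p$. Finally I would impose $\omega^{a}=1$: since $\omega=\exp(2\pi\mathrm{i}k/m)$, this is equivalent to $m\mid ka$, i.e., $m\mid kp(m/g)$, i.e., $g\mid kp$. Because $\gcd(k,g)=1$ by the definition of $\varpi$, this gives $g\mid p$; write $p=gp'$ to obtain $a=p'm$ and $b=p'n$, completing the proof with $p:=p'$.

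The only delicate point is step one, where I separate the root-of-unity factor $\omega^{a}$ from $u^{(bm-an)/g}$. Note $(bm-an)/g$ need not be an integer, but this is harmless: either by taking absolute values (if $|u|\neq 1$, then $|u|^{(bm-an)/g}=1$ forces $(bm-an)/g=0$) or by writing $u=\mathrm{e}^{s}$ for some $s\in\mathbb{C}$ with $s/(\pi\mathrm{i})$ irrational (using that $u$ is not a root of unity), the equation $\omega^{a}u^{(bm-an)/g}=1$ forces the exponent to vanish. I expect this is the only conceptually nontrivial step; the remainder is a clean two-stage coprimality argument.
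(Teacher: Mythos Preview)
Your proof is correct and follows essentially the same two-stage argument as the paper: first use that $u$ is not a root of unity to force the exponent $(bm-an)/g$ to vanish, then use $\gcd(m/g,n/g)=1$ to write $a=p(m/g)$, $b=p(n/g)$, and finally use $\omega^{a}=1$ together with $\gcd(k,g)=1$ to conclude $g\mid p$.

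One small remark: your ``delicate point'' is not actually delicate. Since $g=\gcd(m,n)$ divides both $m$ and $n$, the quantities $m/g$ and $n/g$ are integers, so $(bm-an)/g=b(m/g)-a(n/g)$ is always an integer. The separation of $\omega^{a}$ and $u^{(bm-an)/g}$ is then immediate: if the integer exponent were nonzero, some positive power of $u$ would equal a root of unity, hence $u$ itself would be a root of unity. No case analysis on $|u|$ or irrationality argument is needed.
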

\begin{proof}By hypothesis $\omega^{a}u^{-an/g+bm/g}=1$ and, since $u$ is not a root of unity, $-a\frac{n}{g}+b\frac{m}{g}=0$. From $\gcd \big(\frac{n}{g},\frac{m}{g}\big)=1$, it follows that $a=p^{\prime}\frac{m}{g}$ and $b=p^{\prime}\frac{n}{g}$, for some $p^{\prime}\in \mathbb{Z}$.

Thus, $1=\omega^{a}=\exp\big(\frac{2\pi\mathrm{i}k}{m}\frac{mp^{\prime}}{g}\big) =\exp\big(\frac {2\pi\mathrm{i}k}{g}p^{\prime}\big)$. Moreover, since $\gcd (k,g)=1$, $p^{\prime}=pg$ with $p\in\mathbb{Z}$. Hence $a=pm$ and $b=pn$.
\end{proof}

In fact, to describe all the possibilities for $\omega$, it suffices to let $1\leq k<g$. The following result shows that under certain conditions, we can simplify the specialization of $\varpi=(q,t)$.
\begin{Lemma}\label{Lem:possibleomega}
Suppose $\varpi= ( q,t )=\big(\omega u^{-n/g},u^{m/g}\big)$, where $g=\gcd (m,n )$, $u$ is not a root of unity and $u\neq0$, and $\omega=\exp\big(\frac{2\pi\mathrm{i}k}{m}\big)$ with $\gcd (k,g)=1$. Then we can write the factorization as $(q,t)=\big(\exp\big(\frac{2\pi\mathrm{i}k^{\prime}}{m}\big) (u^{\prime} )
^{-n/g}, (u^{\prime} )^{m/g}\big)$, with $\gcd(k^{\prime},g)=1$ and $1\leq k^{\prime}<g$, and $u^\prime$ is not a root of unity again.
\end{Lemma}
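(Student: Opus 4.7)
The plan is to show that the only freedom in the parameterization $(q,t)=(\omega u^{-n/g},u^{m/g})$ is multiplying $u$ by an $(m/g)$-th root of unity and adjusting $\omega$ accordingly, and then to use this freedom to drive $k$ down into the window $[1,g)$.

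First I would perform a \emph{purely multiplicative division}: write $k=qg+k'$ with $0\le k'<g$, so that
\[
\omega=\exp\!\Big(\tfrac{2\pi\mathrm{i}k}{m}\Big)=\exp\!\Big(\tfrac{2\pi\mathrm{i}k'}{m}\Big)\cdot\zeta,\qquad \zeta:=\exp\!\Big(\tfrac{2\pi\mathrm{i}q}{m/g}\Big),
\]
where $\zeta$ is an $(m/g)$-th root of unity. Because $\gcd(k,g)=1$, the residue $k'\equiv k\pmod g$ also satisfies $\gcd(k',g)=1$; in particular (as $g>1$ is forced by $\gcd(k,g)=1$ unless the statement is vacuous) $k'\ne 0$, so $1\le k'<g$ as required.

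Next I would \emph{absorb the stray factor $\zeta$ into $u$}. The key observation is that replacing $u$ by $u':=\mu u$ with $\mu^{m/g}=1$ preserves the second coordinate: $(u')^{m/g}=\mu^{m/g}u^{m/g}=u^{m/g}=t$. In the first coordinate it induces
\[
(u')^{-n/g}=\mu^{-n/g}\,u^{-n/g},
\]
so the lemma reduces to finding an $(m/g)$-th root of unity $\mu$ with $\mu^{-n/g}=\zeta$. Such $\mu$ exists because the map $\mu\mapsto\mu^{n/g}$ is a group endomorphism of the cyclic group of $(m/g)$-th roots of unity, and it is an automorphism precisely when $\gcd(n/g,m/g)=1$, which is exactly the condition furnished by $g=\gcd(m,n)$. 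Setting $\mu$ to be (the inverse of) that preimage of $\zeta$ yields
\[
\exp\!\Big(\tfrac{2\pi\mathrm{i}k'}{m}\Big)(u')^{-n/g}=\exp\!\Big(\tfrac{2\pi\mathrm{i}k'}{m}\Big)\zeta\,u^{-n/g}=\omega\,u^{-n/g}=q.
\]
Finally, $u'$ is not a root of unity, since otherwise $u=\mu^{-1}u'$ would be one too, contradicting the hypothesis.

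The only potential obstacle is the existence of $\mu$; everything else is arithmetic bookkeeping. That step is precisely where the coprimality $\gcd(n/g,m/g)=1$ is used, and it is the analogue (on the level of finite cyclic groups) of the coprimality argument already used in Lemma~\ref{qmtn}. I therefore expect the write-up to be short, with the bulk of the prose devoted to verifying that the two coordinates of $\varpi$ match after the substitution.
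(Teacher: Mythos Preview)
Your argument is correct and follows essentially the same strategy as the paper: reduce $k$ modulo $g$ to land in $[1,g)$, then absorb the leftover $(m/g)$-th root of unity into $u$. The only difference is cosmetic: the paper writes down the required root of unity explicitly via a B\'ezout relation $z_2m+z_3n=g$ (taking $\psi=\exp(2\pi\mathrm{i}z_1z_3g/m)$ and checking the exponent arithmetic by hand), whereas you invoke the fact that raising to the $(n/g)$-th power is an automorphism of the cyclic group of $(m/g)$-th roots of unity to guarantee existence of $\mu$. Both are the same coprimality observation, one packaged constructively and one abstractly. One small stylistic remark: you use $q$ for the integer quotient in $k=qg+k'$, which collides with the Macdonald parameter $q$ already in play; pick a different letter when you write this up.
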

\begin{proof}Set $k^{\prime}=k-z_{1}g$ with $z_{1}\in\mathbb{Z}$ such that $1\leq k^{\prime}<g$, that is, $z_{1}=\big\lfloor \frac{k}{g}\big\rfloor $. By definition of $\gcd$, there exist $z_{2},z_{3}\in\mathbb{Z}$ such that $z_{2}m+z_{3}n=g$. Replace $u$ by $\psi u^{\prime}$ where ${\psi=\exp\big(\frac{2\pi\mathrm{i}}{m}z_{1}z_{3}g\big)}$, then $(\psi u^{\prime})^{m/g}=(u^{\prime})^{m/g}$ and
\begin{align*}
\omega u^{-n/g} & =\exp\left(\frac{2\pi\mathrm{i}}{m}(k-nz_{1}z_{3})\right)(u^{\prime})^{-n/g}\\
& =\exp\left(\frac{2\pi\mathrm{i}}{m}(k-z_{1}(g-z_{2}m))\right)(u^{\prime})^{-n/g} =\exp\left(\frac{2\pi\mathrm{i}k^{\prime}}{m}\right) (u^{\prime})^{-n/g}.
\end{align*}
This completes the proof.
\end{proof}

Note that Lemma~\ref{Lem:possibleomega} shows that the number of connected components of the solution set for~$\varpi$ in~$ (\mathbb{C}\backslash \{0 \} )^{2}$ equals $\phi (g )$, where $\phi$ is the Euler function.

Since we study nonsymmetric Macdonald polynomials, the study of the spectral vectors associated is important. The spectral vector for $\alpha(\S)$ has a nice description when specialized.

\begin{Proposition}\label{specvecct} For $1\leq i \leq N$, $\zeta_{\alpha(\S )}(i)|_{\varpi}=t^{\mathsf{CT}_{\mathbb{S}}[i]}$.
\end{Proposition}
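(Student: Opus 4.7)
The plan is to evaluate the spectral vector entry
\[
\zeta_{\alpha(\S)}(i) = q^{\alpha(\S)_{i}}\, t^{N - r_{\alpha(\S)}(i)}
\]
by substituting the explicit descriptions of $\alpha(\S)_{i}$ from Definition~\ref{DefAlpha} and of $r_{\alpha(\S)}(i)$ from the preceding lemma, and then eliminating $q$ by means of the identity $q^{m}t^{n}|_{\varpi}=1$. This identity is built into the definition of $\varpi$: from $q|_{\varpi}=\omega u^{-n/g}$, $t|_{\varpi}=u^{m/g}$, and $\omega^{m}=1$, one gets $q^{am}|_{\varpi}=t^{-an}$ for every integer $a$. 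The argument then splits according to whether $\row{S}{i}=1$ or $\row{S}{i}\ge 2$.

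In the first case, $\alpha(\S)_{i}=0$ kills the $q$-factor, and $r_{\alpha(\S)}(i)=N+1-\col{S}{i}$ leaves the exponent of $t$ equal to $\col{S}{i}-1$. Since $\row{S}{i}=1$, this already equals $\col{S}{i}-\row{S}{i}=\ct{S}{i}$, and no specialization is required.

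In the second case, write $r=\row{S}{i}\ge 2$, so that $\alpha(\S)_{i}=(d+r-2)m$ and $r_{\alpha(\S)}(i)=\sum_{u=r}^{K+1}\tau_{u}-\col{S}{i}+1$. The preparatory combinatorial step is to check the telescoping identity $\sum_{u=r}^{K+1}\tau_{u}=\nu_{r-1}$: using $\tau_{1}=dn-1$, $\tau_{2}=\dots=\tau_{K}=n-1$, $\tau_{K+1}=\nu_{K}$, together with the recursion $\nu_{j}=\nu_{j-1}-(n-1)$ and $\nu_{1}=N-(dn-1)$, one obtains $\sum_{u=2}^{K+1}\tau_{u}=\nu_{1}$ and then $\sum_{u=r}^{K+1}\tau_{u}=\nu_{r-1}$ for all $r\ge 2$. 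Substituting this and then $q^{(d+r-2)m}|_{\varpi}=t^{-(d+r-2)n}$ gives
\[
\zeta_{\alpha(\S)}(i)|_{\varpi}= t^{-(d+r-2)n\,+\,N-\nu_{r-1}+\col{S}{i}-1}.
\]
Finally, plugging in $\nu_{r-1}=N-(dn-1)-(r-2)(n-1)$ and simplifying reduces the exponent to $\col{S}{i}-r=\ct{S}{i}$, as required.

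The proof is essentially bookkeeping, since all the needed ingredients are already in place. The only genuinely non-trivial input is $q^{m}t^{n}|_{\varpi}=1$, which is forced by the choice of $\varpi$. The one spot that deserves care is the identification $\sum_{u=r}^{K+1}\tau_{u}=\nu_{r-1}$ (an off-by-one index slip is the main risk); once this is recorded, the rest is a single exponent match.
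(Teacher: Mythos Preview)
Your proof is correct and follows essentially the same approach as the paper: the same case split on $\row{S}{i}=1$ versus $\row{S}{i}\ge 2$, the same use of $q^{m}t^{n}|_{\varpi}=1$ to trade $q^{(d+r-2)m}$ for $t^{-(d+r-2)n}$, and the same exponent simplification. The only cosmetic difference is that you route through the identity $\sum_{u=r}^{K+1}\tau_{u}=\nu_{r-1}$ and then expand $\nu_{r-1}$, whereas the paper computes $N-\sum_{u=r}^{K+1}\tau_{u}=(nd-1)+(r-2)(n-1)$ directly as $\tau_{1}+\cdots+\tau_{r-1}$; these are the same calculation in different notation.
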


\begin{proof}By the definition of the spectral vector, $\zeta_{\alpha(\S )}(i)=q^{\alpha(\S )_{i}}t^{N-r_{\alpha(\S )}(i)}$. Now, we specialize it to $\varpi$. If $\mathsf{row}_{\mathbb{S}}[i]=1$, then $\zeta_{\alpha(\S )}(i)= t^{\mathsf{col}_{\mathbb{S}}[i]-1}=t^{\mathsf{CT}_{\mathbb{S}}[i]}$.
Otherwise, the exponent of $q$ is $(d-2+\mathsf{row}_{\mathbb{S}}[i])m$, and then the exponent of $t$ under the specialization is
\begin{gather*}
N-n(d-2+\mathsf{row}_{\mathbb{S}}[i])- \left(\sum_{u=\mathsf{row}_{\mathbb{S}}[i]}^{K+1}\tau_{u}-\mathsf{col}_{\mathbb{S}}[i]+1\right)\\
\qquad{} =-n(d-2+\mathsf{row}_{\mathbb{S}}[i])+\mathsf{col}_{\mathbb{S}}
[i]-1+(nd-1)+(\mathsf{row}_{\mathbb{S}}[i]-2)(n-1)\\
\qquad{} = \mathsf{col}_{\mathbb{S}}[i]-\mathsf{row}_{\mathbb{S}}[i]=\mathsf{CT}_{\mathbb{S}}[i].\tag*{\qed}
\end{gather*}\renewcommand{\qed}{}
\end{proof}

\section{The equipolar property}\label{Sec4}
The equipolar property appears in this work with the purpose of working with polynomials whose hook length products\footnote{See~\cite{S1999} for more details about \emph{hook length products}.} $h_{q,t}(\alpha,tq)$ vanish at $\varpi$, but for which the poles do not occur when the set of variables is small enough. This property allows us to produce a minimal list of labels~$\alpha$ that have to be analyzed.

\begin{Definition}Let $\alpha$ and $\beta$ be compositions. We say that $M_{\alpha}$ and $M_{\beta}$ are \emph{$\varpi$-equipolar} if $\alpha^{+}=\beta^{+}$ and either both $M_{\alpha}$ and $M_{\beta}$ have no poles at $\varpi$ or both have at least one pole at~$\varpi$.
\end{Definition}

By Proposition~\ref{Prop:Acoefficients}, the coefficient $A_{\alpha,\beta}(q,t)$ is the coefficient of $x^{\beta}$ in $M_{\alpha}$, which is a rational function of $q,t$ whose denominator is of the form $1-q^{a}t^{b}$. Whether $M_{a}$ has a pole at $\varpi$ depends on the presence of a factor $1-q^{mp}t^{np}$, for some integer $p\geq1$, in the denominator. However, the action of $T_i$ by itself introduces no new poles because $x^\gamma T_i$ is a polynomial in~$x$ with coefficients in $\mathbb{Z}[t]$, for any composition $\gamma$. Recall also that $\rho_{i}=\frac{\zeta_{\alpha}(i+1)}{\zeta_{\alpha}(i)}$.

\begin{Proposition}\label{equi1} If $\rho_{i}|_{\varpi}\neq t^{\pm1}$ and $\rho_{i}|_{\varpi}\neq1$, then $M_{\alpha}$ and $M_{\alpha s_{i}}$ are $\varpi$-equipolar.
\end{Proposition}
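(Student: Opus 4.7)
The plan is to use Proposition \ref{PropMsiRels} in both directions: express $M_{\alpha s_i}$ in terms of $M_\alpha$ and $M_\alpha T_i$, and then invert that relation to express $M_\alpha$ in terms of $M_{\alpha s_i}$ and $M_{\alpha s_i} T_i$. The crucial observation (already recorded right before the statement) is that the operator $T_i$ itself introduces no new poles since $x^\gamma T_i$ is a polynomial in $x$ with coefficients in $\mathbb{Z}[t]$, so all pole behavior is governed by the scalar coefficients appearing in the identities of Proposition \ref{PropMsiRels}.

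First I would dispose of trivial cases and reduce to a normal form. If $\alpha_i = \alpha_{i+1}$ then $\alpha s_i = \alpha$ and there is nothing to prove. If $\alpha_i > \alpha_{i+1}$, swap the labels $\alpha \leftrightarrow \alpha s_i$; under this swap $\rho_i$ becomes $\rho_i^{-1}$, so the three excluded values $\{1, t, t^{-1}\}$ for $\rho_i|_\varpi$ transform to the same set, keeping the hypothesis intact. So we may assume $\alpha_i < \alpha_{i+1}$.

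Next, the first identity of Proposition \ref{PropMsiRels} rewrites as
\[
M_{\alpha s_i} = M_\alpha T_i + \frac{1-t}{1-\rho_i}\, M_\alpha.
\]
Provided $\rho_i|_\varpi \neq 1$, the scalar $(1-t)/(1-\rho_i)$ has no pole at $\varpi$, and since $T_i$ introduces no new poles, we conclude that if $M_\alpha$ has no pole at $\varpi$ then neither does $M_{\alpha s_i}$. For the converse, solve the second identity of Proposition \ref{PropMsiRels} for $M_\alpha$:
\[
M_\alpha = \frac{(1-\rho_i)^2}{(1-\rho_i t)(t-\rho_i)}\, M_{\alpha s_i} T_i - \frac{\rho_i(1-t)(1-\rho_i)}{(1-\rho_i t)(t-\rho_i)}\, M_{\alpha s_i}.
\]
The denominators vanish exactly when $\rho_i \in \{t, t^{-1}\}$, so the hypothesis $\rho_i|_\varpi \neq t^{\pm 1}$ ensures that both scalar coefficients have no pole at $\varpi$. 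Again using that $T_i$ creates no new poles, we get that if $M_{\alpha s_i}$ has no pole at $\varpi$ then neither does $M_\alpha$.

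Combining both implications, $M_\alpha$ has a pole at $\varpi$ if and only if $M_{\alpha s_i}$ does, and since $\alpha^+ = (\alpha s_i)^+$ trivially, this is precisely the definition of $\varpi$-equipolar. The proof is essentially a routine check: the only subtle point is matching each of the three excluded values in the hypothesis to the specific denominator it controls ($1-\rho_i$ on the forward pass, $1-\rho_i t$ and $t-\rho_i$ on the inverse pass), and noting that the $\alpha_i > \alpha_{i+1}$ case is covered because inverting $\rho_i$ permutes the bad set $\{1,t,t^{-1}\}$ onto itself.
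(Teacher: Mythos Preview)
Your proof is correct and follows essentially the same approach as the paper: reduce by symmetry to the case $\alpha_i < \alpha_{i+1}$, use the two identities of Proposition~\ref{PropMsiRels} (the first as written, the second solved for $M_\alpha$) to pass pole-freeness in each direction, and observe that $T_i$ itself introduces no poles while the scalar coefficients are controlled precisely by the three excluded values of $\rho_i|_\varpi$. Your write-up is slightly more explicit than the paper's (in particular your remark that inverting $\rho_i$ permutes the bad set $\{1,t,t^{-1}\}$ is a nice justification of the symmetry reduction), but the argument is the same.
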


\begin{proof}Since the relation is symmetric in $\alpha\neq\alpha s_{i}$, we assume that $\alpha_{i}<\alpha_{i+1}$. Moreover, to simplify the notation, we also assume that all the expressions depending on~$q$ and~$t$ appearing in this proof are evaluated at $\varpi$. By the relations described in Proposition~\ref{PropMsiRels},
\begin{gather*}
M_{\alpha s_{i}} =M_{\alpha}T_{i}+\frac{1-t}{1-\rho_{i}}M_{\alpha},\\
M_{\alpha} =\frac{(1-\rho_{i})^{2}}{(1-\rho_{i}t)(t-\rho_{i})}M_{\alpha s_{i}}T_{i}-\frac{\rho_{i} (1-t) (1-\rho_{i})}{(1-\rho_{i}t)(t-\rho_{i})}M_{\alpha s_{i}}.
\end{gather*}
Then, the transformation $M_{\alpha}\rightarrow M_{\alpha s_{i}}$ is invertible for generic parameters $(q,t)$ and introduces no pole at $\varpi$ provided that $\rho_{i}\neq t^{\pm1}$ and $\rho_{i}\neq1$.
\end{proof}

\begin{remark*}
The condition $\rho_{i}|_{\varpi}\neq1$ is necessary for the validity of the proof, even though it is always true for quasistaircases. For instance, for $\alpha
=\big(0,m,1^{n-1}\big)$, $q^{\alpha_{2}-\alpha_{1}}t^{r_{\alpha} (1)-r_{\alpha}(2)}=q^{m}t^{n}$. However, $\alpha$ is not of staircase type.
\end{remark*}

\subsection{Back to Theorem~\ref{MainThm}}\label{SecProofMainThm}
In the introduction we state our main theorem and the goal of this paper. Now, it is time to get back to it. Let us recall it.

\begin{theorem*}[Theorem~\ref{MainThm}]
The polynomials $ \{M_{\alpha(\S)}\colon \S\in\tab_\tau \}$ specialized to $(q,t)=\varpi$ are a~basis of isotype~$\tau$ and are singular.
\end{theorem*}

We have already done part of its proof. First of all, the action formulas for $\tau(T_{i})$ follow from the spectral vector relations described in Proposition~\ref{specvecct}.

By the definition of singular polynomials, Definition~\ref{Def:Singular}, we need to show that for $1\leq i\leq N$, $M_{\alpha (\S )}\xi_{i}=M_{\alpha(\S)}\phi_{i}$. Our idea is to show that no $M_{\alpha(\S)}$ has a pole at $\varpi$ and that if $\mathsf{col}_{\mathbb{S}}[i] =\mathsf{col}_{\mathbb{S}}[i+1]$, for some $i$ and $\S $, then $M_{\alpha(\S)s_{i}}$ has no pole at $\varpi$. This way, we conclude that $M_{\alpha(\S)}T_{i}=-M_{\alpha(\S)}$, and so $M_{\alpha(\S)}\xi_{i}=M_{\alpha(\S)}\phi_{i}$, for $1\leq i\leq N$, by Proposition~\ref{eigenphi}.

These results will take up the rest of the paper. We finish this section with the gist of our approach and how far we are.

Given $\S \in\mathsf{RSTab}_{\tau}$, consider the pair $(\alpha(\S ),\c{S})$. The next two results tell us what happen when $\mathsf{CT}_{\mathbb{S}}[i]-\mathsf{CT}_{\mathbb{S}}[i+1]\geq2$.

\begin{Corollary}\label{eqpolar} Let $\S \in\mathsf{RSTab}_{\tau}$ be such that $\mathsf{CT}_{\mathbb{S}}[i]-\mathsf{CT}_{\mathbb{S}}[i+1]\neq0, \pm1$. Then, $M_{\alpha(\S )}$ and $M_{\alpha(\S s_{i})}$ are $\varpi$-equipolar.
\end{Corollary}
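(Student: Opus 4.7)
The plan is to reduce directly to Proposition \ref{equi1}. First I would compute $\rho_i|_\varpi$ for the composition $\alpha = \alpha(\S)$ using Proposition \ref{specvecct}: since $\zeta_{\alpha(\S)}(j)|_\varpi = t^{\mathsf{CT}_\S[j]}$, one obtains
\[
\rho_i|_\varpi = \frac{\zeta_{\alpha(\S)}(i+1)}{\zeta_{\alpha(\S)}(i)}\bigg|_\varpi = t^{\mathsf{CT}_\S[i+1]-\mathsf{CT}_\S[i]} = t^{-d[i]}.
\]

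Next I would observe that $t=u^{m/g}$ with $m/g$ a positive integer and $u$ not a root of unity, so $t$ itself is not a root of unity. Consequently $t^k=1$ forces $k=0$, and $t^k\in\{t,t^{-1}\}$ forces $k=\pm1$. The hypothesis $d[i]\neq 0,\pm1$ therefore gives $\rho_i|_\varpi \notin\{1,t,t^{-1}\}$, which is exactly the input required by Proposition \ref{equi1}.

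Finally I would translate between tableau and composition language: by Definition \ref{DefAlpha}, the entry $\alpha(\S)_j$ depends only on $\mathsf{row}_\S[j]$, so exchanging the labels $i$ and $i+1$ in $\S$ exchanges exactly the $i$-th and $(i+1)$-th entries of the composition. Hence $\alpha(\S s_i)=\alpha(\S)s_i$, and Proposition \ref{equi1} then yields that $M_{\alpha(\S)}$ and $M_{\alpha(\S s_i)}$ are $\varpi$-equipolar.

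There is no real obstacle here; the statement is essentially a bookkeeping corollary of Proposition \ref{equi1} once the spectral vector has been specialized via Proposition \ref{specvecct}. The only point requiring minor care is confirming that $t$ is not a root of unity at the specialization $\varpi$, which follows because $m/g$ is a nonzero integer and $u$ is assumed not to be a root of unity; Lemma \ref{qmtn} (with $a=0$) also confirms that $t^b=1$ forces $b=0$.
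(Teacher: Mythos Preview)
Your proposal is correct and follows exactly the approach the paper intends: the paper states Corollary \ref{eqpolar} without proof, treating it as an immediate consequence of Proposition \ref{equi1} once the spectral-vector specialization of Proposition \ref{specvecct} is invoked, and your write-up simply fills in those details (including the observation that $t=u^{m/g}$ is not a root of unity and the identification $\alpha(\S s_i)=\alpha(\S)s_i$).
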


Starting at $\S_{0}$, there is a sequence of steps that end up at $\S $, where each step links $\S^{\prime}$ to $\S^{\prime}s_{i}$ with $\mathsf{row}_{\mathbb{S^{\prime}}}[i] < \mathsf{row}_{\mathbb{S^{\prime}}}[i+1]$ and $\mathsf{col}_{\mathbb{S^{\prime}}}[i]> \mathsf{col}_{\mathbb{S^{\prime}}}[i+1]$. Thus, $\mathsf{CT}_{\mathbb{S^{\prime}}}[i]-\mathsf{CT}_{\mathbb{S^{\prime}}}[i+1]\geq2$ and so, $M_{\alpha
(\S^{\prime})}$ and $M_{\alpha(\S^{\prime}s_{i})}$ are $\varpi$-equipolar.
By an inductive argument on $\mathrm{inv}(\S^{\prime})$, we have the following result.

\begin{Corollary}\label{equi2}
Let $\S \in\mathsf{Tab}_{\tau}$. Then, $M_{\alpha(\S_{0})}$ and
$M_{\alpha(\S )}$ are $\varpi$-equipolar and, equivalently, $M_{\alpha(\S_{1})}$ and $M_{\alpha(\S )}$ are $\varpi$-equipolar.
\end{Corollary}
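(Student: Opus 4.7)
The strategy is to treat $\varpi$-equipolarity as an equivalence relation on the family $\{M_{\alpha(\S)} : \S \in \tab_\tau\}$ and to propagate the property ``no pole at $\varpi$'' from the distinguished tableau $\S_0$ to an arbitrary $\S$ along a path of elementary steps. Since every $\alpha(\S)$ is a rearrangement of the same quasistaircase $\lambda$, the clause $\alpha^+ = \beta^+$ in the definition of equipolarity is automatic within this family, and the remaining binary condition (no pole versus at least one pole at $\varpi$) is manifestly reflexive, symmetric, and transitive. Thus it suffices to exhibit, for each $\S \in \tab_\tau$, a chain of $\varpi$-equipolarities connecting $M_{\alpha(\S)}$ to $M_{\alpha(\S_0)}$ along which Corollary~\ref{eqpolar} can be invoked at every link.

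I would induct on the nonnegative integer $k = \mathrm{inv}(\S_0) - \mathrm{inv}(\S)$. The base case $k = 0$ forces $\S = \S_0$ and is trivial. For $k \geq 1$, since $\S_0$ is inv-maximal and $\mathrm{inv}(\S) < \mathrm{inv}(\S_0)$, there exist a tableau $\mathbb{R} \in \tab_\tau$ and an index $i$ with $\S = \mathbb{R}^{(i)}$ and $\mathrm{inv}(\mathbb{R}) = \mathrm{inv}(\S) + 1$; by the definition of a step, $\row{R}{i} < \row{R}{i+1}$ and $\col{R}{i} > \col{R}{i+1}$, so that
\[
\ct{R}{i} - \ct{R}{i+1} = \bigl(\col{R}{i} - \col{R}{i+1}\bigr) + \bigl(\row{R}{i+1} - \row{R}{i}\bigr) \geq 2.
\]
Corollary~\ref{eqpolar}, applied to $\mathbb{R}$ (viewed as an element of $\rstab_\tau$ via the inclusion $\tab_\tau \subset \rstab_\tau$), then delivers the $\varpi$-equipolarity of $M_{\alpha(\mathbb{R})}$ and $M_{\alpha(\S)}$; the inductive hypothesis applied to $\mathbb{R}$ delivers that of $M_{\alpha(\S_0)}$ and $M_{\alpha(\mathbb{R})}$, and transitivity closes the step.

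The ``equivalently'' clause falls out by specializing the statement just established to $\S = \S_1$, which yields the $\varpi$-equipolarity of $M_{\alpha(\S_0)}$ and $M_{\alpha(\S_1)}$; combining this with the main conclusion for arbitrary $\S$ via transitivity produces the companion statement centered at $\S_1$. There is no serious technical obstacle here: the argument is an induction-plus-transitivity exercise whose sole analytic content is the bound $\ct{R}{i} - \ct{R}{i+1} \geq 2$, which is tuned precisely to the hypothesis $\ct{R}{i} - \ct{R}{i+1} \neq 0, \pm 1$ of Corollary~\ref{eqpolar}. The substantive labor, namely showing that $M_{\alpha(\S_0)}$ itself has no pole at $\varpi$ so that equipolarity actually transports the desired property across the orbit, is the task deferred to the critical pair analysis that occupies the remainder of the paper.
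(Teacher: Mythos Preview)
Your proof is correct and follows essentially the same approach as the paper: the paper's argument (given in the paragraph immediately preceding the corollary) also proceeds by induction on $\mathrm{inv}$, linking $\S_0$ to $\S$ through a chain of steps $\S' \mapsto \S' s_i$ with $\ct{\S'}{i} - \ct{\S'}{i+1} \geq 2$ and invoking Corollary~\ref{eqpolar} at each link. You are simply more explicit about the equivalence-relation structure and about deriving the $\S_1$ clause via transitivity, but the content is the same.
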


This means that while there exists some $i$ such that $\mathsf{CT}_{\mathbb{S}}[i]- \mathsf{CT}_{\mathbb{S}}[i+1]\geq2$, we must apply the step $s_{i}$. In this algorithm, the steps $s_{i}$ are under control until no more steps are possible. In the end, the resulting pair $(\alpha(\S^{\prime}),\c{S^{\prime}})$ satisfies that $\mathsf{CT}_{\mathbb{S^{\prime}}}[i]\leq \mathsf{CT}_{\mathbb{S^{\prime}}}[i+1]+1$, for $1\leq i<N$. Therefore, now we have to understand what happens when $\mathsf{CT}_{\mathbb{S}}[i]\leq\mathsf{CT}_{\mathbb{S}}[i+1]+1$.

\section{Critical pairs and the minimal set of configurations}\label{Sec5}

According to the end of the previous section, we are concerned with tableaux with $\mathsf{CT}_{\mathbb{S}}[i+1]=\mathsf{CT}_{\mathbb{S}}[i]+1$, for which Corollary~\ref{eqpolar} do not apply. These tableaux are of the form $\S s_{i}$ where $\S \in\mathsf{Tab}_{\tau}$ and $\mathsf{col}_{\mathbb{S}}[i]=\mathsf{col}_{\mathbb{S}}[i+1]$. The rest of the paper is dedicated to prove the following result.

\begin{Theorem}\label{PropCriticalPairs} Let $\tau$ as in Definition~{\rm \ref{DefTauNu}}. For $\S \in\mathsf{Tab}_{\tau}$ with $\mathsf{col}_{\mathbb{S}}[i]=\mathsf{col}_{\mathbb{S}}[i+1]$ for some $i$, the nonsymmetric Macdonald polynomials $M_{\alpha(\S)}$ and $M_{\alpha (\S s_{i} )}$ in $N$ variables have no poles at~$\varpi$.
\end{Theorem}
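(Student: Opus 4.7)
The plan is to combine the equipolarity reductions of Section~\ref{Sec4} with a direct Yang--Baxter analysis at a small list of critical configurations. By Proposition~\ref{Prop:Acoefficients} together with Lemma~\ref{qmtn}, any denominator of $M_{\alpha(\S)}$ or $M_{\alpha(\S s_{i})}$ that vanishes at $\varpi$ must be of the form $1-q^{pm}t^{pn}$ for some $p\in\mathbb{Z}_{>0}$, so the whole task reduces to ruling out such factors.

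First I would use Corollary~\ref{equi2} to replace the generic $\S$ by a reference tableau (for instance $\S_{0}$ or $\S_{1}$) within each equipolarity class, and then extract a minimal list of local shapes that a critical pair can take. In every critical configuration one has $\mathsf{row}_{\mathbb{S}}[i]=\mathsf{row}_{\mathbb{S}}[i+1]+1$ and $\mathsf{col}_{\mathbb{S}}[i]=\mathsf{col}_{\mathbb{S}}[i+1]$, so by Proposition~\ref{specvecct} $\rho_{i}|_{\varpi}=t$; this is precisely the situation in which Proposition~\ref{equi1} fails, which explains why such pairs require a direct analysis. The restricted shape $\tau=(dn-1,(n-1)^{K-1},\nu_{K})$ cuts the critical configurations down to a few local patterns, depending on whether the column shared by $i$ and $i+1$ sits between two interior rows of length $n-1$, between an interior row and the bottom row of length $dn-1$, or abuts the top row of length $\nu_{K}$; these are the cases enumerated in Section~\ref{Sec6}.

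For each such pattern I would construct $M_{\alpha(\S)}$ and $M_{\alpha(\S s_{i})}$ along an explicit path in the Yang--Baxter graph of~\cite{DL2011}, starting from the quasistaircase-indexed $M_{\lambda}$, whose construction uses only the shift operator $\pi$ and thus introduces no factor $1-q^{pm}t^{pn}$. Each subsequent step $\gamma\to\gamma s_{j}$ multiplies the rational coefficients by $(1-\rho_{j})^{\pm 1}$ or by $(1-\rho_{j}t)^{\pm 1}(t-\rho_{j})^{\pm 1}$, and using the explicit formula for $r_{\alpha(\S)}$ from Section~\ref{QUStair} one verifies, via Lemma~\ref{qmtn}, that $\rho_{j}$ takes the form $q^{pm}t^{pn}$ only at a controlled list of steps. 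At each such step, a matching numerator factor cancels the offending denominator, so no residual pole at $\varpi$ survives in the coefficients of $M_{\alpha(\S)}$ or $M_{\alpha(\S s_{i})}$.

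The main obstacle will be the combinatorial bookkeeping of the Yang--Baxter paths: one must route the path so that no intermediate composition accumulates a permanent bad denominator, and then verify that every transient bad factor is cancelled. The quasistaircase structure---parts in multiples of $m$ and interior-row length $n-1$ in $\tau$---is exactly what forces the coincidence $(a,b)=p(m,n)$ to occur only in tandem with a matching zero of $(1-\rho_{j}t)(t-\rho_{j})$, and the detailed case analysis of Section~\ref{Sec6} completes the proof.
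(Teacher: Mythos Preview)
Your outline diverges from the paper's actual argument at the decisive point, and the substitute you propose has a real gap.

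The paper does not track cancellations along Yang--Baxter paths. After the equipolarity reduction to $\mu=\alpha(\Theta_{j,k})$ (Propositions~\ref{config1} and~\ref{config2}), it invokes Proposition~\ref{critpoles}: $M_{\mu}$ is pole-free at $\varpi$ provided there is no composition $\beta$ with $\ell(\beta)\le N$, $\mu\vartriangleright\beta$, and $R_{\mu}=R_{\beta}$. All of Section~\ref{Sec6} is a combinatorial analysis of the level sets $B_c=\{i:\beta_i=c\}$ and the intervals $I_u$, $E_1,\dots,E_4$, leading to the conclusion that the unique critical pair $(\mu,\beta)$ has $\ell(\beta)=N+j>N$ (Theorems~\ref{FinalThm1}, \ref{FinalThm2} and their $j=1$ analogues). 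The same mechanism, via Proposition~\ref{beta1}, handles $M_{\lambda}$ itself. No numerator/denominator cancellation ever enters.

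Two specific problems with your plan. First, the claim that ``$M_{\lambda}$ is constructed using only the shift operator $\pi$'' is false: the Yang--Baxter path from $M_{0^N}$ to $M_{\lambda}$ for a quasistaircase $\lambda$ necessarily interleaves affine steps with $s_j$-steps, each of which contributes a $(1-\rho_j)^{-1}$; pole-freeness of $M_{\lambda}$ is itself nontrivial and is what Proposition~\ref{beta1} establishes. Second, the assertion that at every step where $\rho_j|_{\varpi}\in\{1,t^{\pm1}\}$ ``a matching numerator factor cancels the offending denominator'' is exactly the hard content of the theorem, and you give no mechanism for it. A factor $(1-\rho_j)^{-1}$ introduced at one step is not cancelled by $(1-\rho_{j'}t)(t-\rho_{j'})$ at some other step in any automatic way; whether a pole survives is a global property of the coefficients $A_{\alpha,\beta}(q,t)$, which is precisely why the paper replaces path-tracking by the spectral-vector criterion of Proposition~\ref{critpoles} and then carries out the interval analysis of Section~\ref{Sec6}.
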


Our technique for proving the absence of a pole for a polynomial $M_{\gamma}$ is to show that the spectral vector $\zeta_{\gamma}$ is different from the spectral vector of each element of $ \{ \beta\colon \alpha\vartriangleright \beta,\ell (\beta )\leq N \}$. We use the \textit{critical pair method} to establish this.

Consider two compositions of $N$, $\alpha$ and $\beta$, such that for all $i$, $\zeta_{\alpha}(i)-\zeta_{\beta}(i)|_{\varpi}=0$. This means that
\begin{gather*}
 q^{\alpha_{i}}t^{N-r_{\alpha}(i)}-q^{\beta_{i}}t^{N-r_{\beta}(i)}\big|_{\varpi}= q^{\alpha_{i}}t^{N-r_{\alpha}(i)}\big(1-q^{\beta_{i}- \alpha_{i}}t^{r_{\alpha}(i)-r_{\beta}(i)}\big)\big|_{\varpi}=0.
\end{gather*}
Therefore, by applying Lemma~\ref{qmtn}, there exist integers $p_{i}$ such that $\beta_{i}-\alpha_{i}= mp_{i}$ and $r_{\alpha}(i)-r_{\beta}(i)=np_{i}$, for all~$i$. This motivates the following definition.

\begin{Definition}\label{Def:criticalpairs}
Let $(m,n)\in\mathbb{N}^{2}$ be a pair with $n\geq2$, and take $N^{\prime}\geq N$. We say that the pair of compositions of $N^{\prime}$ $(\alpha,\beta)$ is an \emph{$(m,n)$-critical pair} if $\alpha\vartriangleright\beta$ and there exists $p\in\mathbb{Z}^{N^{\prime}}$ such that $\beta=\alpha+ mp$ and $r_{\alpha}-r_{\beta}=np$.
\end{Definition}

\begin{remark*}Trailing zeros can be adjoined to $\alpha$ and $\beta$ without changing the criticality property. In fact, if $\alpha_{i}=0=\beta_{i}$, for $i\geq i_{0}$, then $r_{a}(i)=i=r_{\beta}(i)$ and $p_{i}=0$. In other words, the definition is independent of $N^{\prime}$ as long as $N^{\prime}$ is sufficiently large. For fixed $\alpha$ and $\beta$, it is enough to take $N^{\prime}\geq\max \{\ell(\alpha),\ell(\beta)\}$. For this paper, $N^{\prime}$ is implicit and large enough unless otherwise is specified.
\end{remark*}

Critical pairs were introduced in~\cite{D2007} by one of the authors of this paper. We use the algorithm included in~\cite{D2007} to produce the second element of the pair when we have the first element of the pair as input. In~\cite{KS1997}, there is a known formula for the least common multiple of the
denominators of the coefficients of $M_{\alpha}$ which involves a certain hook product. However, it assumes that the number of variables is at least $|\alpha|$. Thus, we need a method of handling a restricted number of variables which shows that there is no $\beta$ such that $\ell(\beta)\leq \ell(\alpha)$ and $(\alpha,\beta)$ is a critical pair.

The following is an easy consequence of the definition of critical pairs, Definition~\ref{Def:criticalpairs}.

\begin{Lemma}\label{(m,n)(1,n)}
Let $(\alpha,\beta)$ be a $(m,n)$-critical pair. If $\big(\frac{1}{m}\big)\alpha$ is a composition of $N$, then $\big(\frac{1}{m}\big)\beta$ is a composition of $N^{\prime}$, for some $N^{\prime}\geq N$. Moreover, $\big( \big(\frac{1}{m}\big)\alpha,\big(\frac{1}{m}\big)\beta\big)$ is a $(1,n)$-critical pair. Conversely, if $(\alpha^{\prime},\beta^{\prime})$ is a $(1,n)$-critical pair, then $(m\alpha^{\prime},m\beta^{\prime})$ is a $(m,n)$-critical pair.
\end{Lemma}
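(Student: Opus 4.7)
The plan is to unpack Definition~\ref{Def:criticalpairs} and observe that every ingredient scales linearly under multiplication by $m$ (or $1/m$). The only subtlety is divisibility and non-negativity of the scaled entries; the rest is essentially bookkeeping.

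First I would handle the forward direction. Let $(\alpha,\beta)$ be an $(m,n)$-critical pair with witness $p\in\mathbb{Z}^{N'}$, so that $\beta=\alpha+mp$ and $r_{\alpha}-r_{\beta}=np$. Assuming $(1/m)\alpha$ is a composition means each $\alpha_i$ is a non-negative integer divisible by $m$. Writing $\beta_i=\alpha_i+mp_i$ makes each $\beta_i$ a multiple of $m$ as well, and $\beta_i\ge 0$ since $\beta$ is already a composition; hence $(1/m)\beta$ is a composition (one may need to append trailing zeros to match lengths, which accounts for the phrasing ``$N'\ge N$''). For the criticality itself, I would point out that the rank function $r_\gamma$ depends only on the total pre-order of the entries of $\gamma$, so scaling every entry by a positive rational leaves it invariant: $r_{(1/m)\alpha}=r_\alpha$ and $r_{(1/m)\beta}=r_\beta$. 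Therefore
\begin{gather*}
(1/m)\beta=(1/m)\alpha+p,\qquad r_{(1/m)\alpha}-r_{(1/m)\beta}=r_\alpha-r_\beta=np,
\end{gather*}
so the same $p$ witnesses that $\bigl((1/m)\alpha,(1/m)\beta\bigr)$ is a $(1,n)$-critical pair.

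Next I would check that $(1/m)\alpha\vartriangleright (1/m)\beta$. This uses that the dominance order $\succ$ is defined via partial sums and that ``size'' and ``nonincreasing rearrangement'' both commute with scalar multiplication by $1/m>0$: $|(1/m)\alpha|=(1/m)|\alpha|=(1/m)|\beta|=|(1/m)\beta|$; $(1/m)\alpha\neq(1/m)\beta$ iff $\alpha\neq\beta$; and each partial-sum inequality $\sum_{i=1}^{j}\alpha_{i}\geq\sum_{i=1}^{j}\beta_{i}$ survives division by $m$. Likewise $\bigl((1/m)\alpha\bigr)^{+}=(1/m)\alpha^{+}$, so the tiebreaker in the definition of $\vartriangleright$ transfers as well.

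For the converse, the argument is symmetric and even simpler, since no divisibility needs to be checked: multiplying $\alpha'$ and $\beta'$ entrywise by $m$ keeps them compositions, the same witness $p$ gives $m\beta'=m\alpha'+mp$ and $r_{m\alpha'}-r_{m\beta'}=r_{\alpha'}-r_{\beta'}=np$, and $\vartriangleright$ is preserved by the same partial-sum scaling argument as above. I do not expect any genuine obstacle here: the lemma is essentially saying that critical-pair data is ``homogeneous of degree one'' in $m$, and the proof is a direct verification from Definition~\ref{Def:criticalpairs}. The only point worth stating carefully is the remark about trailing zeros, which justifies why the ambient number of entries $N'$ in the conclusion need not match $N$ exactly.
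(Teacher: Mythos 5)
Your proof is correct, and it is exactly the direct verification from Definition~\ref{Def:criticalpairs} that the paper intends: the paper states this lemma without proof, calling it ``an easy consequence of the definition.'' Your two key observations --- that the rank function depends only on the relative order of the entries and is therefore invariant under positive scaling, and that the $\vartriangleright$ order and the divisibility/non-negativity of the entries transfer --- are precisely the points that need checking, so there is nothing to add.
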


We present two other consequences of this definition.

\begin{Lemma}\label{interval} Let $(\alpha,\beta)$ be a $(m,n)$-critical pair. If there exist $i$ and $p$ such that $\alpha_{i}=\alpha_{i+u}$, for $1\leq u\leq p$, and $\beta_{i}=\beta_{i+p}$, then $\beta_{i+u}=\beta_{i}$, for $1\leq u\leq p$.
\end{Lemma}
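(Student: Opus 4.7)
The plan is to compare the rank functions of $\alpha$ and $\beta$ on the index block $[i,i+p]$ and to exploit both halves of the critical pair condition simultaneously. Writing $\mathbf{c}\in\mathbb{Z}^{N'}$ for the vector guaranteed by Definition~\ref{Def:criticalpairs} (so that $\beta=\alpha+m\mathbf{c}$ and $r_\alpha-r_\beta=n\mathbf{c}$), the assumption $\alpha_i=\alpha_{i+p}$ together with $\beta_i=\beta_{i+p}$ immediately yields $c_i=c_{i+p}$, and hence the difference equality
\[
r_\alpha(i+p)-r_\alpha(i)=r_\beta(i+p)-r_\beta(i).
\]

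The heart of the argument is to evaluate both sides via formula~\eqref{Def:rankfunct}. On the left, since $\alpha_i=\alpha_{i+1}=\cdots=\alpha_{i+p}$, the ``strictly greater'' term of $r_\alpha$ is the same at both endpoints, while the ``equal, counted up to the current index'' term grows by exactly one at each step along the block; this forces $r_\alpha(i+p)-r_\alpha(i)=p$. On the right, writing $w:=\beta_i=\beta_{i+p}$, the analogous ``strictly greater than $w$'' term is again common to $r_\beta(i)$ and $r_\beta(i+p)$, so their difference equals $\#\{k\in[i+1,i+p]\colon \beta_k=w\}$. Combining these two computations forces this cardinality to be $p$, and since the ambient interval $[i+1,i+p]$ itself has cardinality $p$, every index in the block must satisfy $\beta_k=w$, which is precisely the desired conclusion.

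I do not anticipate a serious obstacle here; the proof is a straight telescoping application of~\eqref{Def:rankfunct} combined with the defining relations of a critical pair. The only bookkeeping point worth flagging is that the symbol $p$ appears in two unrelated senses in the present context (an integer gap in the statement of the lemma versus a vector in Definition~\ref{Def:criticalpairs}); in the write-up I would rename the latter to $\mathbf{c}$, as above, to avoid any collision of notation.
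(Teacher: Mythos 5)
Your argument is correct and follows essentially the same route as the paper: both derive $r_\beta(i+p)-r_\beta(i)=r_\alpha(i+p)-r_\alpha(i)=p$ from the two halves of the critical-pair condition, then identify this difference with $\#\{k\in[i+1,i+p]\colon\beta_k=\beta_i\}$ via~\eqref{Def:rankfunct} and conclude by counting. The notational remark about renaming the vector $p$ is sensible but does not change the substance.
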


\begin{proof}Consider the equation $(r_{\beta}(i+p)-r_{\alpha}(i+p))m=n(\alpha_{i}-\beta_{i})$ and subtract from it $(r_{\beta}(i)-r_{\alpha}(i))m=n(\alpha_{i}-\beta_{i})$.
Now, use that $r_{\alpha}(i+p)-r_{\alpha}(i)=p$ to obtain that $r_{\beta}(i+p)-r_{\beta}(i)\allowbreak=p$. Moreover, by the definition of the rank function~\eqref{Def:rankfunct}, $r_{\beta}(i+p)-r_{\beta}(i)=\# \{ u\colon i<u\leq i+p,\beta_{u}=\beta_{i} \}$. Thus, $i<u\leq i+p$ implies that $\beta_{u}=\beta_{i}$.
\end{proof}

\begin{Lemma}\label{Lem:betaizero}Let $(\alpha,\beta)$ be a $(m,n)$-critical pair with $\beta_{i}=0$ for some $i>\ell(\alpha)$. Then, $\beta_{j}=0$, for all $j>i$.
\end{Lemma}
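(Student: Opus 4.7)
The plan is to combine the two defining identities of a critical pair, $\beta=\alpha+mp$ and $r_{\alpha}-r_{\beta}=np$, at the single index $i$ to extract both $p_i=0$ and $r_{\alpha}(i)=r_{\beta}(i)$, and then to compute $r_{\alpha}(i)$ explicitly from the definition \eqref{Def:rankfunct} in order to force the zero-tail structure on $\beta$.

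First I would note that since $i>\ell(\alpha)$ we have $\alpha_i=0$, so the relation $\beta_i=\alpha_i+mp_i$ combined with the hypothesis $\beta_i=0$ and $m\geq 1$ yields $p_i=0$, and consequently $r_{\alpha}(i)=r_{\beta}(i)$. Next I would evaluate $r_{\alpha}(i)$ directly: because $\alpha_i=0$, the count $\#\{k\colon \alpha_k>\alpha_i\}$ equals the total number $L_{\alpha}$ of positive entries of $\alpha$, and the count $\#\{k\leq i\colon \alpha_k=\alpha_i\}$ equals $i-L_{\alpha}$ since all positive entries of $\alpha$ sit in positions $\leq \ell(\alpha)<i$. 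Adding gives $r_{\alpha}(i)=i$.

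Finally I would apply the same decomposition to $\beta$. Since $\beta_i=0$, we get $r_{\beta}(i)=L_{\beta}+(i-M_{\beta})$, where $L_{\beta}=\#\{k\colon \beta_k>0\}$ and $M_{\beta}=\#\{k\leq i\colon \beta_k>0\}$. The already-established equality $r_{\beta}(i)=r_{\alpha}(i)=i$ forces $L_{\beta}=M_{\beta}$, which is precisely the statement that every positive entry of $\beta$ lies in a position $\leq i$; hence $\beta_j=0$ for all $j>i$, as desired.

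There is no real obstacle here: the lemma follows from careful bookkeeping with the rank-function definition, and the argument is essentially parallel to (and simpler than) the proof of Lemma~\ref{interval}. The only subtlety worth flagging is the consistency of the argument with adjoining or stripping trailing zeros, which is explicitly permitted by the remark following Definition~\ref{Def:criticalpairs}.
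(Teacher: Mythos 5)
Your proof is correct and follows essentially the same route as the paper: deduce $p_i=0$ from $\alpha_i=\beta_i=0$, hence $r_{\beta}(i)=r_{\alpha}(i)=i$, and then count the positive entries of $\beta$ via the rank-function definition to conclude $\#\{j>i\colon\beta_j>0\}=0$. The only difference is that you spell out the intermediate bookkeeping (the quantities $L_\beta$, $M_\beta$) that the paper's two-line proof leaves implicit.
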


\begin{proof}From $\alpha_{i}=\beta_{i}=0$, it follows that $r_{\beta}(i)=i$. Now, by definition of the rank function~\eqref{Def:rankfunct}, $r_{\beta}(i)=\# \{ j\colon j\leq i,\beta
_{j}\geq0 \}+\# \{ j\colon j>i,\beta_{j}>0 \}$. Thus, $\# \{j\colon j>i,\beta_{j}>0 \}=0$.
\end{proof}

The next result sets up a sufficient condition for having no poles, and that will be used to prove Theorem~\ref{PropCriticalPairs}.

\begin{Proposition}\label{critpoles}
Let $\alpha$ be a composition. Suppose that there is no $\gamma$, with $\ell(\gamma)\leq N$, such that $(\alpha,\gamma)$ is an $(m,n)$-critical pair. Then, $M_{\alpha}$ has no poles at $\varpi$.
That is, the coefficients $A_{\alpha,\beta}(q,t)|_{\varpi}$, with $\alpha\vartriangleright\beta$, are well-defined.
\end{Proposition}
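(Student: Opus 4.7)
The plan is to express each coefficient $A_{\alpha,\beta}(q,t)$ as the solution of a triangular linear system derived from the Cherednik eigenvalue equations $M_{\alpha}\xi_{i}=\zeta_{\alpha}(i)M_{\alpha}$, and to use the absence of $(m,n)$-critical pairs to invert this system without introducing any factor of the form $1-q^{pm}t^{pn}$ in the denominator.

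First, I recall that the Cherednik operators act triangularly with respect to $\vartriangleright$: for every composition $\beta$ and every~$i$,
\begin{gather*}
x^{\beta}\xi_{i}=\zeta_{\beta}(i)\,x^{\beta}+\sum_{\beta\vartriangleright\delta}c_{\beta,\delta,i}(q,t)\,x^{\delta},
\end{gather*}
where the coefficients $c_{\beta,\delta,i}$ lie in $\mathbb{Z}\big[q,t,t^{-1}\big]$. This follows from the definition $\xi_{i}=t^{i-1}T_{i-1}^{-1}\cdots T_{1}^{-1}\pi\, T_{N-1}\cdots T_{i}$, together with the fact that $\pi$ is a polynomial shift and each $T_{j}^{\pm 1}$ maps polynomials to polynomials with Laurent coefficients in $t$; in particular none of the $c_{\beta,\delta,i}$ has a pole at $\varpi$. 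Substituting the expansion $M_{\alpha}=q^{\ast}t^{\ast}x^{\alpha}+\sum_{\alpha\vartriangleright\beta}A_{\alpha,\beta}\,x^{\beta}$ into $M_{\alpha}\xi_{i}=\zeta_{\alpha}(i)M_{\alpha}$ and comparing the coefficient of $x^{\beta}$ on both sides yields, for each $i$,
\begin{gather*}
(\zeta_{\alpha}(i)-\zeta_{\beta}(i))\,A_{\alpha,\beta}=q^{\ast}t^{\ast}\,c_{\alpha,\beta,i}+\sum_{\alpha\vartriangleright\gamma\vartriangleright\beta}A_{\alpha,\gamma}\,c_{\gamma,\beta,i}.
\end{gather*}

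I would finish by induction on the $\vartriangleright$-depth of $\beta$ below $\alpha$. In the base case there are no intermediate $\gamma$, so the sum on the right-hand side is empty. In the inductive step, the inductive hypothesis ensures every $A_{\alpha,\gamma}$ on the right-hand side is already regular at $\varpi$, and since the $c_{\gamma,\beta,i}$ never have $\varpi$-poles, the whole right-hand side is regular at $\varpi$. To solve for $A_{\alpha,\beta}$ without creating a new pole it suffices to find one index $i$ with $(\zeta_{\alpha}(i)-\zeta_{\beta}(i))|_{\varpi}\neq 0$. Here the hypothesis enters: if no such $i$ existed, then $\zeta_{\alpha}(i)|_{\varpi}=\zeta_{\beta}(i)|_{\varpi}$ for all $i$, so by Lemma~\ref{qmtn} the differences $\beta_{i}-\alpha_{i}$ and $r_{\alpha}(i)-r_{\beta}(i)$ would be $p_{i}m$ and $p_{i}n$ respectively for every $i$, making $(\alpha,\beta)$ an $(m,n)$-critical pair with $\ell(\beta)\leq N$, contradicting the assumption.

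The main obstacle I anticipate is the triangularity claim together with the polynomiality bound on the $c_{\beta,\delta,i}$: although the shift $\pi$ and the operators $T_{j}^{\pm 1}$ do not individually lower monomials in $\vartriangleright$, their specific composition defining $\xi_{i}$ does, and one must propagate the monomial order carefully through each factor to verify both the strict triangularity and the absence of denominators beyond powers of $t$. Once that structural fact is recorded (or cited from the literature on the Yang--Baxter graph construction), the inductive argument above closes routinely.
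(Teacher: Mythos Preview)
Your argument is correct and rests on the same two structural facts as the paper's proof: the $\vartriangleright$-triangularity of each $\xi_{i}$ with off-diagonal entries that are Laurent polynomials in $q,t$ (hence regular at $\varpi$), and the observation that the critical-pair hypothesis supplies, for every $\beta\vartriangleleft\alpha$, an index $i$ with $(\zeta_{\alpha}(i)-\zeta_{\beta}(i))|_{\varpi}\neq 0$.

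The packaging differs. You expand $M_{\alpha}$ in monomials and solve the resulting recursion for $A_{\alpha,\beta}$ by induction on $\vartriangleright$-depth, dividing at each step by a single nonvanishing eigenvalue gap. The paper instead expands $x^{\alpha}$ in the Macdonald basis and then applies a Lagrange-style projection operator
\[
\mathcal{T}_{\alpha}=\prod_{\beta\vartriangleleft\alpha}\frac{\xi_{i[\beta]}-\zeta_{\beta}(i[\beta])}{\zeta_{\alpha}(i[\beta])-\zeta_{\beta}(i[\beta])},
\]
which annihilates every $M_{\beta}$ with $\beta\vartriangleleft\alpha$ and fixes $M_{\alpha}$; since $\mathcal{T}_{\alpha}$ is manifestly $\varpi$-regular, so is $b_{\alpha,\alpha}^{-1}x^{\alpha}\mathcal{T}_{\alpha}=M_{\alpha}$. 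Your recursive version is perhaps more elementary and makes the pole-avoidance transparent coefficient by coefficient; the paper's projector avoids the explicit induction and yields $M_{\alpha}$ in one stroke. Either way, the ``obstacle'' you flag---strict $\vartriangleright$-triangularity of $\xi_{i}$ with Laurent-polynomial off-diagonal entries---is exactly the input both proofs need, and is standard from the Yang--Baxter graph construction cited in the paper.
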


\begin{proof}By the $\vartriangleright$-triangularity of the operators $\xi_{i}$, there are coefficients $b_{\alpha,\beta}(q,t)$ such that
\begin{gather*}
x^{\alpha}=b_{\alpha,\alpha}(q,t) M_{\alpha}+\sum_{\beta\vartriangleleft \alpha}b_{\alpha,\beta}(q,t) M_{\beta},
\end{gather*}
where $b_{\alpha,\alpha}=q^{j}t^{j^{\prime}}$ for some $j,j^{\prime}\in\mathbb{Z}$. For each $\beta\vartriangleleft\alpha$ with $\ell(\beta)\leq N$, there is at least one index~$i[\beta]$ such that $q^{\alpha_{i[\beta]}}t^{N-r_{\alpha} (i[\beta] )}-q^{\beta_{i[\beta]}}t^{N-r_{\beta
}(i[\beta])}\neq0$ at $\varpi$, or else $(\alpha,\beta)$ is a $(m,n)$-critical pair. Define the operator
\begin{gather*}
\mathcal{T}_{\alpha}=\prod\limits_{\beta\vartriangleleft\alpha} \frac{\xi_{i[\beta]}-q^{\beta_{i[\beta]}}t^{N-r_{\beta} (i[\beta] )}}{q^{\alpha_{i[\beta]}}t^{N-r_{\alpha} (i[\beta] )}-q^{\beta_{i[\beta]}}t^{N-r_{\beta} (i[\beta] )}},
\end{gather*}
for which $x^{\alpha}\mathcal{T}_{\alpha}=b_{\alpha,\alpha}(q,t)M_{\alpha}$.
Each factor of $\mathcal{T}_{\alpha}$ maps $M_{\alpha}$ to $M_{\alpha}$ and, for any $\beta\vartriangleleft\alpha$, $M_{\beta}$ is annihilated by at least one factor. Moreover, by construction, the operator $\mathcal{T}_{\alpha}$ has no poles at $\varpi$ and $\prod\limits_{\beta\vartriangleleft\alpha}\big(q^{\alpha_{i[\beta]}}t^{N-r_{\alpha} (i[\beta])}-q^{\beta_{i[\beta]}}t^{N-r_{\beta} (i[\beta])}\big)M_{\alpha}$ has $(q,t)$-polynomial coefficients. Note that none of the terms in the prefactor vanish at $\varpi$.

The formulation shows that $A_{\alpha,\beta}(q,t)$ is a polynomials in $q$ and $t$ divided by a prefactor that does not vanish at $\varpi$, and so it has no poles.
\end{proof}

The rest of the section is dedicated to providing a minimal list of $\S^{\prime}\in\mathsf{RSTab}_{\tau}$, so that $M_{\alpha(\S s_{i})}$ is $\varpi$-equipolar with $M_{\alpha(\S^{\prime})}$. For that, we look at the possible end configurations, starting with~$\S s_{i}$ with~$\S \in\mathsf{Tab}_{\tau}$ and $\mathsf{col}_{\mathbb{S}}[i]=\mathsf{col}_{\mathbb{S}}[i+1]$.

In Definition~\ref{DefTauNu}, we associate two partitions, $\tau$ and $\nu$, to the quasistaircase partition~$\lambda$. We can define $\nu$ in terms of $\tau$ in a more general setting without $\tau$ being the isotype partition of a~quasistaircase partition. Given an arbitrary partition $\tau$ of $N$, we
define a sequence $\nu$ by setting $\nu_{0}=N$ and $\nu_{j} =N-\sum\limits_{i=1}^{j} \tau_{i}$, for $1\leq j\leq\ell(\tau)$. This sequence is related to the inv-minimal RSYT by $\S_{1}[i,1]=\nu_{i-1}$ and $\S_{1}[i,\tau_{i}] = \nu_{i}+1$, for $1\leq i\leq\ell(\tau)$.

\begin{Definition}\label{Def:propertyV}
Let $\S \in\mathsf{RSTab}_{\tau}$. We say that $\S $ has the \emph{property $V(j,k)$}, for some specific~$j$ and $k$, if by interchanging the entries $\S [j,k]$ and $\S [j+1,k]$ we obtain a RSYT. We denote this new RSYT by $\widehat{\S }$ when the values of $j$ and $k$ are clear from the context.
\end{Definition}

That is, except for the entries at $(j,k)$ and $(j+1,k)$, $\S $ agrees with an RSYT. Note that $\S [j,k]>\S [j+1,k]$. It is not necessarily true that performing a vertical interchange on an RSYT leads to such an $\S $, as we can see in the following example.
\begin{example*}
Interchanging the entries with coordinates $(1,2)$ and $(2,2)$ in
$\begin{array}
[c]{|c|c|c|}\hline
5 & 2 & 1\\\hline
6 & 4 & 3\\\hline
\end{array}$
produces
$\begin{array}
[c]{|c|c|c|}\hline
5 & 4 & 1\\\hline
6 & 2 & 3\\\hline
\end{array}$,
which is not in $\mathsf{RSTab}_{\tau}$.
\end{example*}

We need one more definition, in this case, of a particular element among the subset of $\mathsf{RSTab}_{\tau}$ satisfying the property $V(j,k)$.

\begin{Definition}
For $1\leq j<\ell(\tau)$ and $1\leq k\leq\tau_{j+1}$, there exists a distinguished element $\Theta_{j,k}\in\mathsf{RSTab}_{\tau}$ with the property $V(j,k)$. We describe $\Theta_{j,k}$ by rows as follows. For $i\neq j,j+1$, the $i^{\text{th}}$ row of~$\Theta_{j,k}$ agrees with the $i^{\text{th}}$ row of~$\S_{1}$. For $j$ and $j+1$, the corresponding rows of~$\Theta_{j,k}$ are filled with $\nu_{j-1},\nu_{j-1}-1,\ldots,\nu_{j+1}+1$ in a particular way depending on the value of~$k$. We describe them in the following table in which the first row indicated the column index, the second row indicates the
entries in the~$(j+1)^{\text{th}}$ row, and the third row the entries in the~$j^{\text{th}}$ row. In order to make the table more readable, we denote by dots $\cdots$ when we fill with consecutive integers, and we leave empty spots where the entries are zeros.

In general, for $1<k<\tau_{j+1}$,
\begin{gather*}
\begin{array}[c]{|@{\,}cccccccc@{}}
1 & \cdots & k-1 & k & k+1 & \cdots & \tau_{j+1} & \tau_{j}\\\hline & & & & & & & \\
\nu_{j-1}-k+1 & \cdots & \nu_{j-1}-2k+3 & \nu_{j-1}-2k+2 & \nu_{j}-k & \cdots & \nu_{j+1}+1 & \\
\nu_{j-1} & \cdots & \nu_{j-1}-k+2 & \nu_{j-1}-2k+1 & \nu_{j-1}-2k & \cdots & \cdots & \nu_{j}-k+1
\end{array}
\end{gather*}
We also have two special cases. For $k=1$, we just read the table starting from the $k^{\text{th}}$ column. For $k=\tau_{j+1}$, in the $(j+1)^{\text{th}}$ row, all the entries after the entry in the $(\tau_{j+1})^{\text{th}}$ are zero entries.
\end{Definition}

\begin{remark*}The elements $\Theta_{j,k}$ are extremal which means that we
get to the stage when we cannot apply more steps $s_{i}$,
interchanging $i$ and $i+1$, legally in the sense that $\mathsf{row}_{\mathbb{S}}[i]<\mathsf{row}_{\mathbb{S}}[i+1]$ and $\mathsf{col}_{\mathbb{S}}[i]>\mathsf{col}_{\mathbb{S}}[i+1]$.
\end{remark*}

Let us see an example.
\begin{example*}Consider the tableau $\S $ of shape $\tau=\big(4^{3}\big)$ and described below on the left. Then, we can consider its extremal element for $j=k=2$, $\Theta_{2,2}$, which has the property $V(2,2)$, and that we include on the right.
\begin{gather*}
\S =
\begin{array}
[c]{|c|c|c|c|}\hline
8 & 7 & 2 & 1\\\hline
11 & 6 & 5 & 3\\\hline
12 & 10 & 9 & 4\\\hline
\end{array}
\qquad \Theta_{2,2} =
\begin{array}
[c]{|c|c|c|c|}\hline
7 & 6 & 2 & 1\\\hline
8 & 5 & 4 & 3\\\hline
12 & 11 & 10 & 9\\\hline
\end{array}
\end{gather*}
\end{example*}

Our first result claims that $(i,i+1)$ can be interchanged in $\S $ preserving the property $V(j,k)$ provided that $\mathsf{row}_{\mathbb{S}}[i]<\mathsf{row}_{\mathbb{S}}[i+1]$ and that at least one of the rows is not the $j^{\text{th}}$ or the $(j+1)^{\text{th}}$ row.

\begin{Lemma}\label{stepjk} Let $\S $ be a reverse row-ordered standard tableau that has the property $V(j,k)$ and such that $\mathsf{row}_{\mathbb{S}}[i]<\mathsf{row}_{\mathbb{S}}[i+1]$ and $\{\mathsf{row}_{\mathbb{S}}[i],\mathsf{row}_{\mathbb{S}}[i+1]\}\neq\{j,j+1\}$. Then, $\S^{(i)}= \S s_{i}$ also has the property $V(j,k)$.
\end{Lemma}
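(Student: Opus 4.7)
The plan is to reduce the statement to a standard fact about RSYT. Let $\widehat{\S}$ be the RSYT obtained from $\S$ by interchanging the entries at positions $(j,k)$ and $(j+1,k)$, as guaranteed by property $V(j,k)$, and set $a=\S[j,k]$, $b=\S[j+1,k]$, so that $b>a$. The two operations ``interchange the entries at the fixed positions $(j,k)$ and $(j+1,k)$'' and ``swap the values $i$ and $i+1$'' commute, since one is a permutation of positions and the other of values. Consequently
\[
\widehat{\S s_i}=\widehat{\S}\,s_i,
\]
and verifying property $V(j,k)$ for $\S s_i$ reduces to showing that the tableau obtained from the RSYT $\widehat{\S}$ by exchanging the values $i$ and $i+1$ is again an RSYT.

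First I will check that $\S s_i$ is itself a reverse row-ordered standard tableau. Since $\mathsf{row}_{\mathbb{S}}[i]\neq \mathsf{row}_{\mathbb{S}}[i+1]$, the values $i$ and $i+1$ occupy distinct rows of $\S$, so the swap affects two rows independently; a glance at the horizontal neighbors in either row shows that strict decrease is preserved, because the consecutive integers $i$ and $i+1$ cannot already sit adjacent in the same row of $\S$.

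The main step is to verify that exchanging $i$ and $i+1$ in the RSYT $\widehat{\S}$ produces an RSYT, and for this it suffices to check that in $\widehat{\S}$ the values $i$ and $i+1$ occupy distinct rows and distinct columns. The row of $i$ in $\widehat{\S}$ agrees with $\mathsf{row}_{\mathbb{S}}[i]$ unless $i\in\{a,b\}$, in which case the row of $i$ is swapped between $j$ and $j+1$; the same applies to $i+1$. A short case analysis in the four subcases $i=a$, $i=b$, $i+1=a$, $i+1=b$, combined with the hypotheses $\mathsf{row}_{\mathbb{S}}[i]<\mathsf{row}_{\mathbb{S}}[i+1]$ and $\{\mathsf{row}_{\mathbb{S}}[i],\mathsf{row}_{\mathbb{S}}[i+1]\}\neq\{j,j+1\}$, yields $\mathsf{row}_{\widehat{\mathbb{S}}}[i]<\mathsf{row}_{\widehat{\mathbb{S}}}[i+1]$ in every case; the same hypothesis rules out the configuration $\{i,i+1\}=\{a,b\}$ (and the ordering $i=b$, $i+1=a$ is impossible since $b>a$). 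Column-distinctness in $\widehat{\S}$ is then automatic: if $i$ and $i+1$ shared a column with $i$ in the strictly lower row, the column-decreasing-upward property of $\widehat{\S}$ would force $i>i+1$, a contradiction.

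The main obstacle is the subcase bookkeeping described above: one must carefully track how the positions of $i$ and $i+1$ move when one of them happens to be $a$ or $b$. This is precisely where the hypothesis $\{\mathsf{row}_{\mathbb{S}}[i],\mathsf{row}_{\mathbb{S}}[i+1]\}\neq\{j,j+1\}$ genuinely enters, ruling out the degenerate configurations in which the position swap between $(j,k)$ and $(j+1,k)$ would collapse the two rows of $i$ and $i+1$ into a single row of $\widehat{\S}$ and thus obstruct the subsequent value-swap.
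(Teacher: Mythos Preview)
Your proof is correct and takes a different, more conceptual route than the paper's. The paper argues directly on $\S$: it splits into cases according to where $\mathsf{row}_{\S}[i]$ and $\mathsf{row}_{\S}[i+1]$ sit relative to $j$ and $j+1$, and in each case checks by hand that the row/column inequalities defining property $V(j,k)$ survive the swap (only one case is written out, the others are left to the reader). Your approach instead passes through the associated RSYT $\widehat{\S}$ and uses the commutation $\widehat{\S s_i}=\widehat{\S}\,s_i$ to reduce everything to the standard fact that swapping consecutive values $i,i+1$ in an RSYT yields an RSYT precisely when they lie in distinct rows and columns. This packages the case analysis into a single clean statement and makes transparent exactly where the hypothesis $\{\mathsf{row}_{\S}[i],\mathsf{row}_{\S}[i+1]\}\neq\{j,j+1\}$ is needed: it is what prevents the position-swap at $(j,k),(j+1,k)$ from collapsing the rows of $i$ and $i+1$ in $\widehat{\S}$. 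The paper's approach is more hands-on; yours isolates the underlying mechanism. Note that your verification that $\S s_i$ remains row-ordered is genuinely needed and not a consequence of $\widehat{\S}\,s_i$ being an RSYT, so it is good that you checked it separately.
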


\begin{proof}The argument has several cases, each more or less obvious.
These cases can be briefly described by $\mathsf{row}_\S[i+1]<j$; $\mathsf{row}_\S[i]>j+1$; $\mathsf{row}_\S[i]<j$ and $\mathsf{row}_\S[i+1]\geq j$; or $\mathsf{row}_\S[i]\leq j+1$ and $\mathsf{row}_\S[i+1]>j+1$.

We prove the case when $\S [j+1,k]=i+1$, and leave the other cases for the reader.

By hypothesis $\mathsf{row}_{\mathbb{S}}[i]<j$ and $\widehat{\S }[j,k]=i+1>\widehat{\S }[j+1,k]$. This implies that $\widehat{\S }[j+1,k]<i$ and $\S s_{i}[j+1,k]=i> \S s_{i}[j,k]$. Also, $\mathsf{col}_{\mathbb{S}}[i]=\mathsf{col}_{\mathbb{\widehat{S}}}[i]<\mathsf{col}_{\mathbb{\widehat{S}}}[i+1] =\mathsf{col}_{\mathbb{S}}[i+1]$. Thus, $\S^{(i)}$ has the property $V(j,k)$.
\end{proof}

Next, we consider the possible transformations of the rows of $\S $ with property $V(j,k)$ other than $j^{\text{th}}$ and $(j+1)^{\text{th}}$ rows.

\begin{Proposition}\label{config1} Let $\S \in\mathsf{Tab}_{\tau}$ be such that $\mathsf{col}_{\mathbb{S}}[u]=\mathsf{col}_{\mathbb{S}}[u+1]=k$, for some $u$, and set $j=\mathsf{row}_{\mathbb{S}}[u+1]$. Then, $\S s_{u}$ has the property $V(j,k)$ and there is a series of steps as in Lemma~{\rm \ref{stepjk}} so that $\S s_{u}$ is transformed to $\S^{\prime}$, where $\S^{\prime}$ agrees with $\S_{1}$ except in the $j^{\text{th}}$ and $(j+1)^{\text{th}}$ rows.
\end{Proposition}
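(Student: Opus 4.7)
The plan is to establish the two assertions in succession.

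For the first assertion, observe that in the RSYT $\S$ the entries $u$ and $u+1$ sit in the same column $k$; since columns of an RSYT are strictly decreasing from bottom to top and $u,u+1$ are consecutive integers, no entry can lie strictly between them, so they must occupy vertically adjacent cells of column $k$. Combined with $\mathsf{row}_{\mathbb{S}}[u+1]=j$ this forces $\mathsf{row}_{\mathbb{S}}[u]=j+1$. Consequently $\S s_u$ has $u$ at $(j,k)$ and $u+1$ at $(j+1,k)$; interchanging these two entries returns $\S\in\mathsf{Tab}_\tau$, so $\S s_u$ has property $V(j,k)$.

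For the second assertion my strategy is iterated application of Lemma~\ref{stepjk}: starting from $\S s_u$, whenever some index $i$ satisfies $\mathsf{row}[i]<\mathsf{row}[i+1]$, $\mathsf{col}[i]>\mathsf{col}[i+1]$, and $\{\mathsf{row}[i],\mathsf{row}[i+1]\}\neq\{j,j+1\}$, apply the step $s_i$. Each such step preserves property $V(j,k)$ by Lemma~\ref{stepjk}, and a direct pair-counting---the same calculation used to show a legal RSYT step drops the inversion count by $1$, which is combinatorial and does not use the RSYT property---yields that $\#\{(a,b)\colon a<b,\,\mathsf{row}[a]<\mathsf{row}[b]\}$ strictly decreases by $1$ per step. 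The iteration therefore terminates at an extremal $\S^\prime\in\mathsf{RSTab}_\tau$ with property $V(j,k)$ to which no further Lemma~\ref{stepjk} step applies.

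It remains to show that any such extremal $\S^\prime$ already agrees with $\S_1$ on every row $r\notin\{j,j+1\}$, and this is where the real work lies. The plan is by contradiction: pass to the fixed RSYT $\widehat{\S^\prime}\in\mathsf{Tab}_\tau$ obtained by swapping the entries at $(j,k)$ and $(j+1,k)$. Any disagreement between $\S^\prime$ and $\S_1$ on a row $r\notin\{j,j+1\}$ is equally a disagreement between $\widehat{\S^\prime}$ and $\S_1$; by inv-minimality of $\S_1$ in $\mathsf{Tab}_\tau$, there is then some reducing step $s_i$ available in $\widehat{\S^\prime}$. A case analysis---tracking whether $i$ or $i+1$ coincides with one of the two swapped entries and whether the step acts across the row pair $\{j,j+1\}$---shows that either $s_i$ lifts directly to a Lemma~\ref{stepjk}-legal step in $\S^\prime$ (contradicting extremality) or the mismatch at row $r$ supplies an alternative reducing step that does lift. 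The main obstacle is this last extraction, in which the only ``obvious'' reducing step in $\widehat{\S^\prime}$ happens to lie within rows $\{j,j+1\}$: one must exploit the mismatch at row $r\notin\{j,j+1\}$ to locate a reducing step touching row $r$, which automatically avoids the forbidden pair and thus lifts to a legal step in $\S^\prime$, producing the desired contradiction.
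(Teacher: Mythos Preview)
Your first assertion is cleanly handled: the consecutive-integer/strict-column argument pinning $u$ and $u+1$ to adjacent cells of column $k$ is exactly what is needed, and the verification that $\S s_u\in\mathsf{RSTab}_\tau$ with $\widehat{\S s_u}=\S$ is straightforward.

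Your second assertion takes a genuinely different route from the paper. The paper argues \emph{constructively}: it processes the rows below $j$ one at a time (driving each entry of row $a_0$ up to its $\S_1$ value by repeated swaps), then symmetrically handles rows above $j+1$, verifying at each swap that the hypotheses of Lemma~\ref{stepjk} are met. You instead run the Lemma~\ref{stepjk} steps greedily, invoke the drop in the inversion count to force termination at some extremal $\S'$, and then argue by contradiction via $\widehat{\S'}\in\mathsf{Tab}_\tau$. This is a legitimate and in some ways more conceptual strategy; it trades the paper's explicit bookkeeping for a single structural lemma about RSYT.

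The gap is that you do not prove that structural lemma. The sentence ``the mismatch at row $r$ supplies an alternative reducing step that does lift'' is the entire content of the proposition, and you have only restated it as a goal. Concretely, what must be shown is: if $\widehat{\S'}\in\mathsf{Tab}_\tau$ and some row $r\notin\{j,j+1\}$ differs (as a set) from row $r$ of $\S_1$, then there exists $i$ with $\mathsf{row}_{\widehat{\S'}}[i]<\mathsf{row}_{\widehat{\S'}}[i+1]$, $\mathsf{col}_{\widehat{\S'}}[i]>\mathsf{col}_{\widehat{\S'}}[i+1]$, and $r\in\{\mathsf{row}_{\widehat{\S'}}[i],\mathsf{row}_{\widehat{\S'}}[i+1]\}$. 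This is true but not automatic; a clean argument (for $r<j$, say, taking $r$ minimal with a mismatch) is to let $w$ be the smallest entry in row $r$ of $\widehat{\S'}$, note $w\le\nu_r$, let $w'>w$ be least with $\mathsf{row}_{\widehat{\S'}}[w']>r$, and use column-strictness to force $\mathsf{col}_{\widehat{\S'}}[w']<\mathsf{col}_{\widehat{\S'}}[w'-1]$. One must also check that this step, which a priori lives in $\widehat{\S'}$, transfers to $\S'$ when $w'$ or $w'-1$ happens to equal one of the two swapped entries $\S'[j,k],\S'[j+1,k]$; this is where your ``case analysis'' is needed and it does go through, but you have not written it. Until that lemma and its lifting are actually supplied, the proposal is an outline rather than a proof.
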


\begin{proof}We proceed by rows, starting with the $1^{\rm st}$ row, unless $j=1$. Suppose the process has arrived at $\S^{\prime}$ with $\S^{\prime}[a,b]=\S_{1}[a,b]$, for $1\leq a<a_{0}<j$ and $1\leq b\leq\tau_{a}$, and for $a=a_{0}$ and $b<b_{0}\leq\tau_{a_{0}}$, with possibly $b_{0}=0$. Then, $v=\S^{\prime}[a_{0},b_{0}]<\S_{1}[a_{0},b_{0}]$ and the entry $v+1$ in~$\S^{\prime}$
must satisfy $\mathsf{row}_{\mathbb{S^{\prime}}}[v+1] > \mathsf{row}_{\mathbb{S^{\prime}}}[v]$ and $\mathsf{col}_{\mathbb{S^{\prime}}}[v+1] < \mathsf{col}_{\mathbb{S^{\prime}}}[v]$. Applying Lemma~\ref{stepjk}, $\S^{\prime}s_{v}$ has the property $V(j,k)$. Continuing in this way leads to $\S^{\prime\prime}$ which agrees with $\S_{1}$ in rows with index~$<j$, and every entry in rows with index $\geq j$ is less than $\nu_{j-1}+1$. Let $z$ be the largest entry in~$j^{\text{th}}$ and~$(j+1)^{\text{th}}$ rows, which is an entry with row index $>j+1$ in $\S_{1}$ and satisfies
\begin{gather*}
z=\max\big(\big( \{\S^{\prime\prime}[j,b]| 1\leq b\leq\tau_{j} \} \cup \{\S^{\prime\prime}[j+1,b^{\prime}]|1\leq b^{\prime}\leq\tau_{j+1} \}\big)\cap[1,\nu_{j+1}]\big).
\end{gather*}
If the intersection is empty, then this part of the process is done. Otherwise $\mathsf{row}_{\mathbb{S^{\prime\prime}}}[z+1]>j+1$ and $\mathsf{row}_{\mathbb{S^{\prime\prime}}}[z]\leq j+1$. Applying once more Lemma~\ref{stepjk}, $\S^{\prime\prime} s_{z}$ has the property $V(j,k)$ and the maximum is increased by $1$, one step closer to the upper limit $\nu_{j+1}=\S_{1}[j+2,1]$.

If the entries in $[\nu_{j+1}+1,\nu_{j-1}]$ are in the $j^{\text{th}}$ and $(j+1)^{\text{th}}$ rows of $\S^{\prime\prime}$, then the process is done.
Otherwise, one of these values is replaced by $\nu_{j+1}$. Let $y$ be the replaced entry, i.e. $y=\S^{\prime\prime}[a,b]$, for some $a>j+1$. If $y=\nu_{j+1}+1$, then $\S^{\prime\prime} s_{y-1}$ has $\nu_{j+1}$ moved to a row with index $>j+1$. Otherwise, $\mathsf{row}_{\mathbb{S^{\prime\prime}}}[y-1]=j$ or $j+1$, and $\S^{\prime\prime} s_{y-1}$ replaces $y$ by $y-1$ in
a row with index $>j+1$ in $\S^{\prime\prime}$. Repeat this process until $y=\nu_{j+1}+1$.
\end{proof}

This proof describes a process for the $1^{\rm st}$ row. We apply it now to all the rows after the $(j+1)^{\text{th}}$ row until these rows agree with the corresponding rows of $\S_{1}$. Once this is done, we describe the values appearing in the $j^{\text{th}}$ and $(j+1)^{\text{th}}$ rows.

\begin{Proposition}\label{config2}
Let $\S \in\mathsf{RSTab}_{\tau}$ such that it has the
property $V(j,k)$ and each row of $\S $ except the $j^{\text{th}}$ and $(j+1)^{\text{th}}$ rows agrees with the corresponding rows of $\S_{1}$. Then,
\begin{gather*}
\S [j,k] =\nu_{j-1}+1-2k,\\
\S [j+1,k] =\nu_{j-1}+2-2k,\\
\bigcup_{1\leq s<k} \{ \S [j,s],\S [j+1,s] \} = [\nu_{j-1}-2k+3,\nu_{j-1} ],\\
\bigcup_{s>k} \{ \S [j,s],\S [j+1,s] \} = [\nu _{j+1}+1,\nu_{j-1}-2k].
\end{gather*}
Furthermore, if we consider the subtableaux of $\ \S $ given by $\{\S [u,v ]\colon j\leq u\leq j+1,1\leq v<k\}$ and $\{\S [u,v]\colon j\leq u\leq j+1,k<v\leq\tau_{u}\}$, we observe that their entries can be arranged to be in row-by-row order, so that the property $V(j,k)$ is preserved in each step and the resulting tableau is~$\Theta_{j,k}$.
\end{Proposition}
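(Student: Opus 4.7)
The plan is to (i) determine the set of entries populating rows $j$ and $j+1$ from the fact that all other rows match $\S_{1}$, (ii) combine the row-decreasing conditions in $\S$ with those in $\widehat\S$ to pin each entry of those two rows down to its column, and (iii) conclude by a rearrangement whose elementary steps leave column $k$ undisturbed.

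For (i), the $i$th row of $\S_{1}$ is exactly the interval $[\nu_{i}+1,\nu_{i-1}]$, so every integer in $[\nu_{j-1}+1,N]$ already sits strictly below row $j$ and every integer in $[1,\nu_{j+1}]$ sits strictly above row $j+1$. The $\tau_{j}+\tau_{j+1}$ remaining entries of rows $j$ and $j+1$ are therefore precisely $[\nu_{j+1}+1,\nu_{j-1}]$. Setting $a=\S[j,k]$ and $b=\S[j+1,k]$, the fact that $\widehat\S$ is an RSYT gives $b>a$, while the $\rstab$-condition makes both rows of $\S$ strictly decreasing from left to right and yields
\begin{gather*}
\S[j,c]>a\ \text{and}\ \S[j+1,c]>b\ \text{for}\ c<k,\qquad \S[j,c]<a\ \text{and}\ \S[j+1,c]<b\ \text{for}\ c>k.
\end{gather*}

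For (ii), I apply the row-decreasing condition to $\widehat\S$: since $\widehat\S[j,k]=b$ and $\widehat\S[j,c]=\S[j,c]$ for $c\ne k$, this upgrades to $\S[j,c]>b$ for all $c<k$. Symmetrically, the column condition in $\widehat\S$ at $c>k$ reads $\S[j,c]>\S[j+1,c]$, which combined with $\S[j,c]<a$ forces $\S[j+1,c]<a$. So the entries of rows $j$ and $j+1$ split as $2(k-1)$ values exceeding $b$ at columns $<k$, the pair $\{a,b\}$ at column $k$, and $\tau_{j}+\tau_{j+1}-2k$ values below $a$ at columns $>k$. Matching this decomposition against the interval $[\nu_{j+1}+1,\nu_{j-1}]$ and using $\nu_{j-1}=\nu_{j+1}+\tau_{j}+\tau_{j+1}$ pins down
\begin{gather*}
a=\nu_{j-1}-2k+1,\qquad b=\nu_{j-1}-2k+2,
\end{gather*}
and identifies the sub-intervals $[\nu_{j-1}-2k+3,\nu_{j-1}]$ at columns $<k$ and $[\nu_{j+1}+1,\nu_{j-1}-2k]$ at columns $>k$.

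For (iii), each of the two sub-rectangles is a $2\times\ell$ region containing a fixed consecutive interval of entries, rows decreasing, and row $j$ dominating row $j+1$ column by column. An adjacent transposition of a consecutive pair $i,i+1$ lying wholly inside one sub-rectangle leaves the column-$k$ entries $a,b$ and all other column conditions intact, so the property $V(j,k)$ persists; a descending induction on $\mathrm{inv}$ then reduces each sub-rectangle to its unique inv-minimal configuration, in which row $j$ carries the top $\ell$ entries in decreasing order. Assembling the two rearranged sub-rectangles with the fixed outer rows and the column-$k$ pair $(a,b)$ yields exactly $\Theta_{j,k}$. I expect the main obstacle to be step (ii): the two symmetric \emph{upgrades} from $>a$ to $>b$ at columns $<k$, and from $<b$ to $<a$ at columns $>k$, are precisely what separates the interval cleanly around $\{a,b\}$ and makes the counting argument conclusive.
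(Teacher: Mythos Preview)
Your argument is correct and follows essentially the same route as the paper: identify the entries of rows $j$ and $j+1$ as $[\nu_{j+1}+1,\nu_{j-1}]$, use the row and column conditions in both $\S$ and $\widehat\S$ to show the $2(k-1)$ entries at columns $<k$ exceed $b$ while the $\tau_j+\tau_{j+1}-2k$ entries at columns $>k$ lie below $a$, and then squeeze $a<b$ between the resulting bounds. One small slip: the right sub-tableau at columns $>k$ need not be a $2\times\ell$ rectangle when $\tau_{j}>\tau_{j+1}$, but it is still a two-row RSYT of shape $(\tau_j-k,\tau_{j+1}-k)$ and your inv-reduction goes through unchanged.
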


\begin{proof}By hypothesis, the entries in the $j^{\text{th}}$ and $(j+1)^{\text{th}}$ rows of $\S $ comprise the interval $[\nu_{j+1}+1,\nu_{j-1}]$. Let $m_{1}=\S [j,k]$ and $m_{2}=\S [j+1,k]$. Then, $m_{1}<m_{2}$ and by row-strictness, $\S [j,b]>m_{2}$ and $\S [j+1,b]>m_{2}$, for $1\leq b<k$.
Observe that by the property $V(j,k)$, Definition~\ref{Def:propertyV}, the tableau with $m_{1}$ and $m_{2}$
interchanged is an RSYT.

Similarly, $\S [j,b]<m_{1}$ and $\S [j+1,b]<m_{1}$, for $b>k$. Thus, the first $2k-2$ entries with column index $<k$, are in the interval $[m_{2}+1,\nu_{j-1}]$. Since the entries of $\S$ are pairwise distinct, it follows that $2k-2\leq\nu_{j-1}-m_{2}$. Analogously, the $\tau_{j}+\tau_{j+1}-2k$ entries of $\S $ in columns with index $>k$ are in the interval $[\nu_{j+1}+1,m_{1}-1]$. Thus,
\begin{gather*}
\tau_{j}+\tau_{j+1}-2k\leq m_{1}-\nu_{j+1}-1=m_{1}- (\nu_{j-1}-\tau_{j}-\tau_{j+1}+1 ).
\end{gather*}
These inequalities imply that $\nu_{j-1}+1-2k\leq m_{1}<m_{2}\leq\nu_{j-1}+2-2k$, and we conclude that $m_{1}=\nu_{j-1}+1-2k$ and $m_{2}=\nu_{j-1}+2-2k$.

This also shows that the first $k-1$ columns form an RSYT with entries $\nu_{j-1}+3-2k\cdots\nu_{j-1}$ and can be transformed to row-by-row order. In the same way, the last $\tau_{j}-k$ columns form an RSYT with entries $\nu_{j+1}+1\cdots\nu_{j-1}+1-2k$.
\end{proof}

\begin{example*}
Consider $\S = \begin{ytableau}
10 & 8 & 7 & 6 & 3 \\
12 & 11 & 9 & 5 & 4 & 2 & 1
\end{ytableau}$, which has the property $V(1,4)$. Then, the row-by-row rearrangement of type $\Theta_{1,4}$ is given by
$\begin{ytableau}
9 & 8 & 7 & 6 & 1 \\
12 & 11 & 10 & 5 & 4 & 3 & 2
\end{ytableau}$.
\end{example*}

\section{Critical pairs for the quasistaircase partitions}
\label{Sec6}

This section includes a series of technical results that lead us to finish our study.

Let $\S \in\mathsf{RSTab}_{\tau}$ with the property $V(j,k)$, for some $j$ and $k$. Applying Corollary~\ref{eqpolar} and Propositions~\ref{config1} and \ref{config2}, $M_{\alpha(\S )}$ and $M_{\alpha (\Theta_{j,k})}$ are $\varpi$-equipolar. For $\Theta_{j,k}$, $\alpha(\Theta_{j,k})$ is defined as follows:
\begin{enumerate}\itemsep=0pt
\item[1)] if $i\leq\nu_{j+1}$ or $i>\nu_{j-1}$ (or equivalently, $\mathsf{row}_{\Theta_{j,k}}[i] \neq j, j+1$), then $\alpha(\Theta_{j,k})_{i}=\lambda_{i}$,
\item[2)] if $\nu_{j+1}+1\leq i\leq\nu_{j}-k$ or $\nu_{j-1}-2k+2\leq i\leq \nu_{j-1}-k+1$, then $\alpha(\Theta_{j,k})_{i}=m(d+j-1)$,

\item[3)] if $\nu_{j}-k+1\leq i\leq\nu_{j-1}-2k+1$ or $\nu_{j-1}-k+2\leq i\leq \nu_{j-1}$ then $\alpha(\Theta_{j,k})_{i}=m(d+j-2)$ for $j>1$, and $\alpha (\Theta_{j,k})_{i}=0$ for $j=1$.
\end{enumerate}

Applying Lemma~\ref{(m,n)(1,n)}, we can assume that $m=1$ in $\alpha (\Theta_{j,k})$, and we denote the resulting composition by~$\mu$.

Let us see an example.
\begin{example*}
Consider $\lambda=\big(4,4,3,3,3,2,2,2,0^{7}\big) $, the quasistaircase with $n=4$, $d=2$, $m=1$. Suppose we apply $s_{9}$ to $\S_{0}$, then $\S_{0} s_{9}$ has property $V(2,2)$. Then,
\begin{gather*}
\S_{0} s_{9}=
\begin{ytableau}
12 & 8 \\
13 & 10 & 5 \\
14 & 9 & 6 \\
15 & 11 & 7 & 4 & 3 & 2 & 1
\end{ytableau} \qquad
\Theta_{2,2}=
\begin{ytableau}
2 & 1 \\
7 & 6 & 3 \\
8 & 5 & 4 \\
15 & 14 & 13 & 12 & 11 & 10 & 9\\
\end{ytableau}
\end{gather*}
and $\mu=\alpha (\Theta_{2,2} )= \big(4,4,3,2,2,3,3,2,0^{7}\big)$. Observe that the location of the two out-of-order entries, $[2,2]$ and $[3,2]$, stays the same. We will show that $ (\alpha (\Theta_{2,2} ),\beta )$ is the only $(1,4)$-critical pair where $\beta=\big(4,4,3,0,0,0,0,3,1^{9}\big)$. Note that $\ell (
\beta )=17=15+2$.
\end{example*}

Now, we want to present an equivalent characterization of the critical pairs, for which we need the following definition.

\begin{Definition} Given a composition $\alpha$, we define the sequence $R_{\alpha}$ by setting $R_{\alpha}(i)=r_{\alpha}(i)\allowbreak +n\alpha_{i}$, for $1\leq i\leq\ell(\alpha)$, and $R_{\alpha}(i)=i$, for $i>\ell(\alpha)$.
\end{Definition}

We use this definition to give another characterization of the critical pairs.

\begin{Lemma}The pair $(\alpha,\beta)$ is a $(1,n)$-critical pair if and only if $\alpha\vartriangleright\beta$ and $R_{\alpha}(i)=R_{\beta}(i)$, for all $i\geq1$.
\end{Lemma}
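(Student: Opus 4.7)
The plan is to unfold Definition \ref{Def:criticalpairs} with $m=1$ and rearrange. The two critical-pair conditions $\beta = \alpha + p$ and $r_\alpha - r_\beta = np$ force $p_i = \beta_i - \alpha_i$ from the first equation; substituting into the second turns the pair of vector identities into the single scalar identity $r_\alpha(i) + n\alpha_i = r_\beta(i) + n\beta_i$ for each $i$, which is exactly $R_\alpha(i) = R_\beta(i)$. The dominance condition $\alpha \vartriangleright \beta$ appears on both sides of the equivalence, so it simply carries over.

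For the forward direction, I would take the hypothesis that $(\alpha,\beta)$ is a $(1,n)$-critical pair, write $p_i = \beta_i - \alpha_i$, substitute into $r_\alpha(i) - r_\beta(i) = n p_i$, and read off $R_\alpha(i) = R_\beta(i)$ for all $1 \leq i \leq N'$. For indices $i > N'$ the convention $R_\alpha(i) = i = R_\beta(i)$ makes the identity automatic. For the converse, I would define $p_i := \beta_i - \alpha_i \in \mathbb{Z}$, note $\beta = \alpha + p$ is now tautological, and reverse the same manipulation to recover $r_\alpha - r_\beta = n p$, verifying the critical-pair definition.

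The only bookkeeping point concerns the convention $R_\alpha(i) = i$ for $i > \ell(\alpha)$: one must check this is compatible with evaluating the defining expression $r_\alpha(i) + n \alpha_i$ via the rank-function formula~\eqref{Def:rankfunct}. This reduces to the observation that if $\alpha_i = 0$ and $i > \ell(\alpha)$, then $r_\alpha(i) = i$, since the first sum in~\eqref{Def:rankfunct} counts all positions with $\alpha_k > 0$ (there are some number $\leq \ell(\alpha) \leq i$ of these) and the second sum counts the complementary zero positions in $\{1,\dots,i\}$, together giving $i$. Combined with the trailing-zeros remark after Definition~\ref{Def:criticalpairs}, which lets us pad $\alpha$, $\beta$, and $p$ with trailing zeros without loss, the two extensions match.

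I do not expect any real obstacle; the lemma is essentially a change of variables ($p_i \leftrightarrow \beta_i - \alpha_i$) that converts the pair of equations in Definition~\ref{Def:criticalpairs} into one scalar equation per index. The only thing to watch is the index-range convention, and that is handled by the one-line computation of $r_\alpha(i)$ above.
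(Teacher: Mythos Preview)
Your proposal is correct; the paper states this lemma without proof, treating it as an immediate reformulation of Definition~\ref{Def:criticalpairs} via the substitution $p_i=\beta_i-\alpha_i$, which is exactly what you carry out. Your handling of the index-range convention is also consistent with the paper's own remark following Definition~\ref{Def:criticalpairs} that trailing zeros give $r_\alpha(i)=i$.
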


Our goal in this section is to analyze the $(1,n)$-critical pairs of the form $(\mu, \beta)$. For that, consider a composition $\beta$ such that $\mu\trianglerighteq\beta$ and $R_{\mu}=R_{\beta}$. We refer to these two assumptions as \emph{usual hypothesis}. We assume them for $\beta$ with
respect to $\mu$, but we occasionally replace $\mu$ by $\lambda$.

Once we analyze the $(1,n)$-critical pairs $(\mu,\beta)$, we show that there are no $(1,n)$-critical pairs of the form $(\lambda,\beta)$. This allows us to conclude that $M_{\lambda}$ has no poles in $\varpi$ for any number of variables $\geq\ell (\lambda)$. Taking the idea from~\cite{D2005}, our main tool is applying the maximum principle for the cardinality of the sets $ \{i\colon \beta_{i}=c \}$, for all $c\geq0$.

The arguments in this section are complicated and involve case-by-case studies. That is why this section is split into subsections as follows. In Section~\ref{SetB}, we define the set~$B$, to which we will apply the maximum principle, together with some notation. We also include some useful properties. In Section~\ref{ConseqLambda}, we describe the consequences for $\lambda$ of assuming that $\beta$ satisfies the usual hypothesis with respect to $\lambda$. The last two sections, Sections~\ref{Pairsjlarger1} and~\ref{Pairsj=1}, study the pair~$(\mu,\beta)$ for $j>1$ and $j=1$, respectively. For that, we will do an analysis in terms of different intervals, so we can estimate the size of the set $B$ and the implications
about the possible $\beta$.

\subsection[The set $B$]{The set $\boldsymbol{B}$}\label{SetB}

For $c\geq0$, let $B_{c}=\{i\colon \beta_{i}=c,\, 1\leq i\leq N\}$. In order not to overload the notation, we include $c$ as a subindex of $B$ only when is not clear from the context. Moreover, for intervals $[p_{1},p_{2}]$ and $[p_{3},p_{4}]$, we say $[p_{1},p_{2}]\ll[p_{3},p_{4}]$ if $p_{2}<p_{3}-1$. In
particular, this implies that $\#[p_{1},p_{2}] + \#[p_{3},p_{4}] < \#[p_{1},p_{4}]$.

For $u\neq j,j+1$, by Lemma~\ref{interval}, $B\cap I_{u}$ is either empty or an interval that we denote by $[a_{u},b_{u}]$. Define $s_{u}=\nu_{u-1}-b_{u}$ and $t_{u}=\nu_{u-1}-a_{u}$. Therefore, for $u\neq1, K+1$,
$0\leq s_{u}\leq t_{u}\leq n-2$ and $t_{K+1}\leq\nu_{K}-1$ and $t_{1}\leq nd-2$. Moreover, $\#[a_{u},b_{u}]=\#[s_{u},t_{u}]$.

\begin{Proposition}\label{maxp1} Let $u,u+p\neq j,j+1$ be such that $B\cap I_{u+p}$ and $B\cap I_{u}$ are nonempty and $B\cap I_{s}=\varnothing$, for $u<s<u+p$ $($void if $p=1)$. Then $s_{u+p}-t_{u}=p+1$ and $[s_{u},t_{u}]\ll [s_{u+p},t_{u+p} ]$.
\end{Proposition}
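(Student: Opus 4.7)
The plan is to combine the critical-pair identity $R_\mu(i) = R_\beta(i)$ with the observation that $\mu$ is constant on each interval $I_u$ with $u \neq j, j+1$, and then compute a single rank-function difference in two different ways.

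First I would establish that $r_\mu(i) = i$ for every $i \in I_u$ whenever $u \neq j, j+1$. Indeed, positions $k$ with $\mu_k > v_u$ (where $v_u$ denotes the common value of $\mu$ on $I_u$) number $\tau_{u+1} + \cdots + \tau_{K+1} = \nu_u$, while positions $k \leq i$ with $\mu_k = v_u$ fill the initial segment of $I_u$ and number $i - \nu_u$; these sum to $i$. Evaluating at $i = b_u$ and $i = a_{u+p}$, the identity $r_\beta = r_\mu + n(\mu - \beta)$ together with $\beta_{b_u} = \beta_{a_{u+p}} = c$ gives
\begin{equation*}
r_\beta(b_u) - r_\beta(a_{u+p}) = (b_u - a_{u+p}) + n(v_u - v_{u+p}).
\end{equation*}

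Second, I would compute the same difference directly from the definition of the rank function. Since $b_u > a_{u+p}$ and both indices lie in $B_c$, this difference equals $\#(B_c \cap (a_{u+p}, b_u])$. The hypothesis $B_c \cap I_s = \varnothing$ for $u < s < u+p$, together with Lemma~\ref{interval} giving the interval structure of $B_c \cap I_u$ and $B_c \cap I_{u+p}$, forces the count to be exactly $(t_{u+p} - s_{u+p}) + (t_u - s_u + 1)$. Substituting $b_u - a_{u+p} = (\nu_{u-1} - \nu_{u+p-1}) + t_{u+p} - s_u$ into the first expression and equating the two, the bulk terms cancel, leaving $s_{u+p} - t_u = p + 1$.

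The only subtlety is the boundary case $u = 1$: here $\nu_0 - \nu_p = p(n-1) + (d-1)n$ exceeds the generic gap $p(n-1)$, while $v_1 - v_{1+p} = -(d+p-1)$ differs from the generic $-p$ by $-(d-1)$, so after multiplication by $n$ the two discrepancies cancel and the identity $s_{1+p} - t_1 = p+1$ holds verbatim. This is the only bookkeeping required and the closest thing to an obstacle. Finally, since $p \geq 1$, we obtain $s_{u+p} - t_u = p + 1 \geq 2$, so $t_u + 1 < s_{u+p}$, which is precisely the definition of $[s_u, t_u] \ll [s_{u+p}, t_{u+p}]$.
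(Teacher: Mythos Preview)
Your proof is correct and follows essentially the same approach as the paper: both compute a rank difference in two ways using the identity $R_\mu=R_\beta$ on the intervals $I_u$ and $I_{u+p}$, then handle the boundary case $u=1$ separately. The only minor variation is that the paper evaluates at the adjacent endpoints $a_u$ and $b_{u+p}$, where $r_\beta(a_u)-r_\beta(b_{u+p})=1$ is immediate, whereas you evaluate at the outer endpoints $b_u$ and $a_{u+p}$ and must count the $B_c$-elements in between; the algebra then cancels identically.
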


\begin{proof}By definition, $R_{\mu}(b_{u+p})=b_{u+p}+n\mu_{b_{u+p}}$ and $R_{\mu}(a_{u})=a_{u}+n\mu_{a_{u}}$, and by hypothesis, $R_{\beta}(a_{u}) =r_{\beta}(a_{u})+nc=1+r_{\beta}(b_{u+p})+nc=R_{\beta}(b_{u+p})+1$. Since $R_{\mu}=R_{\beta}$, $R_{\mu}(a_{u})-R_{\mu}(b_{u+p})=1$ and then, $1=a_{u}-b_{u+p}+n (\mu_{a_{u}}-\mu_{b_{u+p}} ) $. Thus,
\begin{gather*}
 (\nu_{u-1}-t_{u} )- (\nu_{u+p-1}-s_{u+p} )
=1-n (\mu_{a_{u}}-\mu_{b_{u+p}} ).
\end{gather*}
If $u>1$, then $\nu_{u-1}-\nu_{u+p-1}=p(n-1)$ and $\mu_{a_{u}}-\mu_{b_{u+p}}=(d+u-2)-(d+u+p-2)=-p$ which implies $s_{u+p}-t_{u}=p+1$. For $u=1$, $\nu_{0}-\nu_{p}=nd-1+(p-1)(n-1)$ and $\mu_{a_{1}}-\mu_{b_{1+p}}=-(d+p-1)$ and again $s_{1+p}-t_{1}=p+1$. Thus $t_{u}<s_{u+p}-1$ and $[s_{u},t_{u}]\ll[s_{u+p},t_{u+p}]$.
\end{proof}

We use Proposition~\ref{maxp1} to estimate the size of $B$.

\begin{Corollary}\label{maxp2} If $B\cap (I_{j}\cup I_{j+1})=\varnothing$ and $B$ has a nonempty intersection with at least two intervals $I_{u}$, with $u\neq j,j+1$, then $\#B\leq n-2$. If, additionally, $B\cap I_{K+1}\neq\varnothing$ then $\#B\leq\nu_{K}-1$.
\end{Corollary}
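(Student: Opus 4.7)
The plan is to enumerate the intervals on which $B$ is supported and then telescope, using Proposition~\ref{maxp1} to control the gaps between consecutive nonempty pieces. Let $u_1<u_2<\cdots<u_r$ be those indices in $\{1,\ldots,K+1\}\setminus\{j,j+1\}$ for which $B\cap I_{u_i}\neq\varnothing$, so that by hypothesis $r\geq 2$. For each $i$ write $B\cap I_{u_i}=[a_{u_i},b_{u_i}]$ and set $s_{u_i}=\nu_{u_i-1}-b_{u_i}$, $t_{u_i}=\nu_{u_i-1}-a_{u_i}$, so that $\#(B\cap I_{u_i})=t_{u_i}-s_{u_i}+1$.

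Next, for each consecutive pair $u_i<u_{i+1}$, all intermediate intervals $I_s$ with $u_i<s<u_{i+1}$ have $B\cap I_s=\varnothing$: either by the choice of the $u_i$, or (when $s\in\{j,j+1\}$) by the standing hypothesis $B\cap(I_j\cup I_{j+1})=\varnothing$. Hence Proposition~\ref{maxp1} applies and yields
\[
[s_{u_i},t_{u_i}]\ll[s_{u_{i+1}},t_{u_{i+1}}],\qquad i=1,\ldots,r-1,
\]
which in particular gives $s_{u_{i+1}}\geq t_{u_i}+2$. Summing the lengths,
\[
\#B=\sum_{i=1}^{r}(t_{u_i}-s_{u_i}+1)=t_{u_r}-s_{u_1}+r-\sum_{i=1}^{r-1}\bigl(s_{u_{i+1}}-t_{u_i}\bigr)\leq t_{u_r}-s_{u_1}-r+2.
\]

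Now I use the uniform bounds on $s_{u_1}$ and $t_{u_r}$. Since $u_1\geq 1$, one has $s_{u_1}\geq 0$. Since $r\geq 2$, necessarily $u_r\geq 2$, so $u_r\neq 1$; if moreover $u_r\neq K+1$, then $t_{u_r}\leq n-2$ directly, while if $u_r=K+1$ then $t_{K+1}\leq \nu_K-1\leq n-2$ because $\nu_K\leq n-1$. In either case the displayed inequality gives $\#B\leq (n-2)-0-r+2=n-r\leq n-2$, which is the first assertion. For the second assertion, the assumption $B\cap I_{K+1}\neq\varnothing$ forces $u_r=K+1$ and hence $t_{u_r}\leq\nu_K-1$; combined with $s_{u_1}\geq 0$ and $r\geq 2$, the same estimate yields $\#B\leq \nu_K-1-0-r+2\leq\nu_K-1$.

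The argument is essentially a telescoping count, and no step should present real difficulty; the only point one has to be careful with is that Proposition~\ref{maxp1} is invoked across stretches that may include the indices $j,j+1$, which is permitted because its hypothesis requires only that $B\cap I_s$ be empty for $u<s<u+p$, with no exclusion of $s=j,j+1$. Everything else is bookkeeping on the two uniform bounds $s_{u_1}\geq 0$ and $t_{u_r}\leq n-2$ (respectively $\leq\nu_K-1$).
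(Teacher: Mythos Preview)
Your proof is correct and follows essentially the same approach as the paper's: enumerate the nonempty $B\cap I_{u_i}$, apply Proposition~\ref{maxp1} to get the $\ll$-chain of $[s,t]$-intervals inside $[0,n-2]$ (or $[0,\nu_K-1]$), and bound $\#B$ by the ambient length minus the gaps. The only cosmetic differences are that the paper orders the $u_i$ decreasingly and phrases the count as ``disjoint subintervals of $[0,t_{u_1}]$ with at least one gap,'' whereas you order increasingly and do the telescoping explicitly (obtaining the slightly sharper $\#B\leq n-r$); your remark that Proposition~\ref{maxp1} allows $s\in\{j,j+1\}$ among the intermediate indices is exactly right.
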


\begin{proof}Suppose $B\cap I_{u_{i}}\neq\varnothing$, for $u_{1}>u_{2}>\cdots>u_{p}$, with $p\geq2$. By Proposition~\ref{maxp1}, $ [s_{u_{p}},t_{u_{p}} ] \ll\cdots\ll [s_{u_{2}},t_{u_{2}} ]\ll [s_{u_{1}},t_{u_{1}} ]$, and so $\#B=\sum\limits_{i=1}^{p} (t_{u_{i}}-s_{u_{i}}+1)$. Notice that the intervals are contained in $[0,t_{u_{1}}]$. Furthermore, $t_{u_{1}}\leq n-2$, for $u_{1}<K+1$, or $t_{u_{1}}=\nu_{K}-1$, for $u_{1}=K+1$. The case $u_{1}=1$ is not possible because $p\geq2$. Note also that there is at least one gap, and therefore, $\#B\leq\#[0,n-2]-1=n-2$, for $u_{1}>K+1$, and $\#B\leq\nu_{K}-1$, for $u_{1}=K+1$.
\end{proof}

\begin{remark*}Proposition~\ref{maxp1} and Corollary~\ref{maxp2} apply to $\lambda$ without the exclusion $u,u+p\neq j,j+1$.
\end{remark*}

\subsection[Consequences of the usual hypothesis for $\lambda$]{Consequences of the \emph{usual hypothesis} for $\boldsymbol{\lambda}$}\label{ConseqLambda}

In this section we show that $\lambda\trianglerighteq\beta$ and $R_{\lambda}=R_{\beta}$ imply that $\beta=\lambda$.

\begin{Lemma}\label{LemBeta}
If $R_{\lambda}=R_{\beta}$, then $\beta_{\nu_{1}+1}=0$.
\end{Lemma}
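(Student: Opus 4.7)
The plan is to compute $R_\lambda(\nu_1+1)$ directly and then combine the hypothesis $R_\lambda=R_\beta$ with the constraint $r_\beta\ge 1$ to force $\beta_{\nu_1+1}=0$.

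First, since $\lambda$ is a partition with $\ell(\lambda)=\nu_1$, the rank function gives $r_\lambda(\nu_1+1)=\nu_1+1$ (the $\nu_1$ positive parts of $\lambda$ all have value strictly greater than $\lambda_{\nu_1+1}=0$, contributing $\nu_1$ to the count, plus $1$ from position $\nu_1+1$ itself). Hence $R_\lambda(\nu_1+1)=\nu_1+1+n\cdot 0=\nu_1+1$. By hypothesis, $R_\beta(\nu_1+1)=\nu_1+1$, which expands to
\[
r_\beta(\nu_1+1)=\nu_1+1-n\beta_{\nu_1+1}.
\]
Since $r_\beta(\nu_1+1)\ge 1$, this already forces $\beta_{\nu_1+1}\le \nu_1/n$. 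In the special case $K=1$ we have $\nu_1=\nu_K\le n-1<n$, so the bound alone yields $\beta_{\nu_1+1}=0$ and the lemma is established.

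For $K\ge 2$, I would proceed by contradiction: assume $c:=\beta_{\nu_1+1}\ge 1$, so $r_\beta(\nu_1+1)=\nu_1+1-nc\le\nu_1-n+1$. Unpacking the definition of the rank and isolating the contribution of position $\nu_1+1$ yields the counting identity
\[
\#\{k\le\nu_1:\beta_k\ge c\}+\#\{k>\nu_1:\beta_k>c\}=\nu_1-nc.
\]
I would then combine this with (i) the mass balance $|\lambda|=|\beta|$ (obtained by summing $R_\lambda=R_\beta$ over all $i$ and using that $\sum_i r_\beta(i)=\sum_i i$), (ii) the dominance $\lambda\succeq\beta$ coming from the \emph{usual hypothesis}, and (iii) Lemma~\ref{Lem:betaizero}, which forces the zeros of $\beta$ beyond position $\nu_1$ to occupy a suffix of $[\nu_1+1,N]$. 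Under $c\ge 1$ these constraints are mutually incompatible.

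The main obstacle is executing this counting contradiction cleanly: the identity caps the number of positions in $[1,\nu_1]$ where $\beta$ is at least $c$, while the mass balance together with the dominance forces a large share of the $\beta$-mass back into those same positions. The quasistaircase structure of $\lambda$ (with plateaus of sizes $\nu_K$ and $n-1$) needs to be used carefully, for instance by comparing the top-$M$ partial sums of $\lambda$ and $\beta^+$ with $M=\#\{k:\beta_k\ge c\}$, to show that the extra weight $c$ placed at $\nu_1+1$ outside the zero-region of $\lambda$ cannot be accommodated without violating one of these hypotheses.
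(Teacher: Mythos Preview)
Your opening computation is correct and matches the paper: $R_\lambda(\nu_1+1)=\nu_1+1$, hence $r_\beta(\nu_1+1)=\nu_1+1-nb$ with $b:=\beta_{\nu_1+1}$. Your treatment of the case $K=1$ via the crude bound $r_\beta\ge 1$ is fine.

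For $K\ge 2$, however, what you have written is a plan rather than a proof, and you explicitly flag the obstacle yourself. The missing idea is precisely the one the paper uses, and it renders the mass-balance/dominance detour unnecessary. The paper does \emph{not} invoke $\lambda\trianglerighteq\beta$ here at all; it instead appeals to the level-set bound established just before (Corollary~\ref{maxp2} applied to $\lambda$): for every $s\ge 0$,
\[
\#\{\,i\le\nu_1:\beta_i=s\,\}\le n-1,
\]
because restricting to $[1,\nu_1]$ excludes the long interval $I_1$, and each remaining interval $I_u$ ($u\ge 2$) has length at most $n-1$; if the level set meets two of them the bound drops further. Summing over $s=0,\dots,b-1$ gives $\#\{i\le\nu_1:\beta_i<b\}\le b(n-1)$, and since
\[
r_\beta(\nu_1+1)=\nu_1+1-\#\{i\le\nu_1:\beta_i<b\}+\#\{i>\nu_1+1:\beta_i>b\}\ \ge\ \nu_1+1-b(n-1),
\]
comparison with $r_\beta(\nu_1+1)=\nu_1+1-nb$ forces $-nb\ge -b(n-1)$, i.e.\ $b\le 0$.

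So the gap in your argument is that you never exploit the interval structure of $\lambda$ through the maximum principle; instead you reach for global constraints (total mass, dominance, suffix-zeros) whose interaction you cannot make precise. Replace that whole programme by the single level-set inequality above and the lemma follows in two lines, uniformly in $K$, and without the extra dominance hypothesis.
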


\begin{proof}
On one hand, since $\ell(\lambda)=\nu_{1}$, $R_{\lambda}(\nu_{1}+1)=\nu_{1}+1$. On the other hand, by definition, $R_{\beta}(\nu_{1}+1)=r_{\beta}(\nu_{1}+1)+n\beta_{\nu_{1}+1}$. Setting $\beta_{\nu_{1}+1}=b$, we obtain that
\begin{align*}
r_{\beta}(\nu_{1}+1)& =\nu_{1}+1-n\beta_{\nu_{1}+1}\\
& = \nu_{1}+1-\# \{i\colon 1\leq i\leq\nu_{1},\,\beta_{i}<b \}+\# \{ i \colon i>\nu_{1}+1,\,\beta_{i}>b \} \\
& \geq\nu_{1}+1-\sum_{s=0}^{b-1}\# \{i \colon 1\leq i\leq\nu_{1},\,\beta_{i}=s \}\geq\nu_{1}+1-b(n-1).
\end{align*}
By Corollary~\ref{maxp2}, $\# \{ i \colon 1\leq i\leq\nu_{1},\,\beta_{i}=s \}\leq n-1$ because the bound $i\leq\nu_{1}$ excludes $I_{1}$ and the other intervals satisfy $\#I_{u}\leq n-1$. Thus, $-nb=r_{\beta}(\nu_{1}+1)-\nu_{1}-1\geq-b(n-1)$, and therefore, $b=0$.
\end{proof}
\begin{remark*}Note that by Lemma~\ref{Lem:betaizero} this implies that $\beta_i =0$ for $i> \nu_1$.
\end{remark*}

\begin{Lemma} If $\lambda\trianglerighteq\beta$ and $R_{\lambda}=R_{\beta}$, then $\beta$ is a permutation of $\lambda$.
\end{Lemma}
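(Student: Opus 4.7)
The plan is to extend the argument of Lemma~\ref{LemBeta} by induction on the level sets $I_j$ of $\lambda$. Lemmas~\ref{LemBeta} and~\ref{Lem:betaizero} together give $\beta_i = 0$ for all $i > \nu_1$, so the nonzero entries of $\beta$ lie in $[1,\nu_1]$. By Lemma~\ref{(m,n)(1,n)} we may reduce to $m=1$, so the claim becomes that $\beta$ has the same multiset of values as $\lambda=\big((d+K-1)^{\nu_K},(d+K-2)^{n-1},\dots,d^{n-1},0^{dn-1}\big)$. Summing the identity $r_\lambda(i)+n\lambda_i = r_\beta(i)+n\beta_i$ over $i$ and using that $r_\lambda$, $r_\beta$ are both permutations of $\{1,\dots,N\}$ immediately yields $|\beta|=|\lambda|$. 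Rewriting this as $r_\beta(i)=i+n(\lambda_i-\beta_i)$ shows $r_\beta$ preserves residues modulo $n$ and, since $\beta_i=0=\lambda_i$ on $I_1$, fixes $I_1$ pointwise and restricts to a permutation of $[1,\nu_1]$.

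The inductive step proceeds from $j$ to $j+1$ (for $j=1,\dots,K$) with the hypothesis that the multiset $\{\beta_i : i\in I_1\cup\dots\cup I_j\}$ equals $\{\lambda_i : i\in I_1\cup\dots\cup I_j\}$, so that $r_\beta$ restricts to a permutation of $[1,\nu_j]$. For $i\in I_{j+1}=[\nu_{j+1}+1,\nu_j]$, the bound $1\le r_\beta(i)\le \nu_j$ together with $r_\beta(i)=i+n((d+j-1)-\beta_i)$ forces $\beta_i\ge d+j-1$ throughout $I_{j+1}$, by the same $(n-2)/n<1$ estimate used in the proof of Lemma~\ref{LemBeta}. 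The interval structure of $B_c$ from Lemma~\ref{interval}, together with the cardinality bounds of Corollary~\ref{maxp2} applied to $\lambda$ without exclusion (per the Remark following it), and the dominance hypothesis $\lambda\trianglerighteq\beta$ (which in particular bounds $\beta^+_1\le d+K-1$), then pin down the multiplicity of $d+j-1$ in $\beta|_{I_{j+1}}$ to be exactly $|I_{j+1}|$. Iterating through $j=K$ establishes $\beta^+=\lambda$.

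The main obstacle is ruling out the possibility that $\beta_i$ takes a value $c>d+j-1$ on $I_{j+1}$ in a manner compatible with $r_\beta$ being a residue-preserving permutation. Such a value would produce an entry of $B_c$ inside $I_{j+1}$; under the inductive hypothesis the remaining occurrences of $B_c$ are confined to $I_1\cup\dots\cup I_j$, so $B_c$ would spread across nonadjacent intervals, activating the gap structure $[s_u,t_u]\ll[s_{u+p},t_{u+p}]$ of Proposition~\ref{maxp1}. A careful accounting of these gap constraints against $|\beta|=|\lambda|$ and against the permutation property of $r_\beta$ eliminates every such configuration, so the multiplicities of $\beta$ and $\lambda$ must agree on every $I_u$, and therefore $\beta$ is a permutation of $\lambda$.
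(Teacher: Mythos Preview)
Your inductive approach contains a real gap. The assertion that the multiset hypothesis ``$\{\beta_i : i\in I_1\cup\dots\cup I_j\}=\{\lambda_i : i\in I_1\cup\dots\cup I_j\}$'' forces $r_\beta$ to restrict to a permutation of $[1,\nu_j]$ is not justified and is not immediate. The multiset hypothesis only says that the values of $\beta$ on $[\nu_j+1,N]$ lie in $\{0,d,\dots,d+j-2\}$; it does \emph{not} say $\beta_i\le\lambda_i$ there. If, say, $\beta$ takes the value $d+j-2$ at some $i\in I_2$, then $r_\beta(i)=i-n(j-2)$ can land in $[1,\nu_j]$, and your restriction claim fails. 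One can rule such configurations out, but only by invoking the permutation property of $r_\beta$ globally (collisions with $r_\beta$ on $I_1$, etc.), and that argument is precisely what you omit. Your ``confinement'' sentence is also backwards: under the inductive hypothesis the values $c>d+j-2$ are \emph{absent} from $I_1\cup\dots\cup I_j$, so the remaining occurrences of $B_c$ lie in $I_{j+2}\cup\dots\cup I_{K+1}$, not in $I_1\cup\dots\cup I_j$; in particular $B_c$ may well meet the adjacent interval $I_{j+2}$, so the ``nonadjacent intervals'' clause does not apply. The final paragraph (``a careful accounting \dots\ eliminates every such configuration'') is a promissory note, not an argument.

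The paper avoids the induction entirely and the proof is much shorter. After establishing $\ell(\beta)=\nu_1$ (your first paragraph does this correctly), one shows that every nonzero $\beta_i$ lies in $[d,d+K-1]$; the upper bound is dominance, and the lower bound comes from the observation that $r_\beta$ fixes $I_1$ pointwise, so $r_\beta(i)\le\nu_1$ for $i\le\nu_1$, which combined with $r_\beta(i)=i+n(\lambda_i-\beta_i)$ forces $\beta_i\ge d$. Then set $D_i=\{s:\beta_s=d+i-2\}$ and $m_i=\#D_i-(\nu_{i-1}-\nu_i)$ for $2\le i\le K+1$. Corollary~\ref{maxp2} (applied to $\lambda$, without the $j,j+1$ exclusion) gives $m_i\le 0$ for each $i$; since $\sum_i \#D_i=\nu_1=\sum_i(\nu_{i-1}-\nu_i)$, we get $\sum_i m_i=0$, whence every $m_i=0$ and $\beta^+=\lambda$. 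This global counting argument is both shorter and cleaner than trying to control the position of values interval by interval, and it separates the present lemma ($\beta^+=\lambda$) from the next proposition ($\beta=\lambda$), which your inductive scheme effectively merges.
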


\begin{proof}Since $\ell(\lambda)=\nu_1$ and $\lambda\trianglerighteq\beta$, we have that $\ell(\beta)\geq\nu_{1}$. However, by Lemma~\ref{LemBeta}, we also have that $\ell(\beta)\leq\nu_{1}$. Therefore, we conclude that $\ell(\beta)=\nu_{1}$.
Moreover, $d\leq\beta_{i}\leq d+K-1$, for $1\leq i\leq\nu_{1}$. For $2\leq i\leq K+1$, let $D_{i}= \{ s\colon \beta_{s}=d+i-2 \}$ and $m_{i}=\#D_{i}-(\nu_{i-1}-\nu_{i})$. By Corollary~\ref{maxp2}, $m_{i}\leq0$.
Also $\sum\limits_{i=2}^{K+1}m_{i}=0$ and thus $m_{i}=0$, for all $i$, and $\beta^{+}=\lambda$.
\end{proof}

We are ready to prove the following result.

\begin{Proposition}\label{beta1} If $\lambda\trianglerighteq\beta$, $R_{\lambda}=R_{\beta}$ and $\beta^{+}=\lambda$, then $\beta=\lambda$.
\end{Proposition}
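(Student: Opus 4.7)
The plan is to prove $\beta_k = \lambda_k$ for every $k$ by induction on $k$. For $k > \nu_1$, Lemma \ref{LemBeta} together with Lemma \ref{Lem:betaizero} already yields $\beta_k = 0 = \lambda_k$, so only the range $1 \le k \le \nu_1$ requires genuine work.

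Fix $k$ with $1 \le k \le \nu_1$ and assume $\beta_i = \lambda_i$ for all $i < k$. Define $v, w \in \{2,\dots,K+1\}$ by $k \in I_v$ and $\beta_k = (d+w-2)m$, so that $\lambda_k = (d+v-2)m$. Note $w \ge 2$: $\beta_k \ne 0$ since $\beta^+ = \lambda$ has exactly $dn-1 = N - \nu_1$ zero parts, all of which are accounted for by the positions $i > \nu_1$ from the remark after Lemma \ref{LemBeta}. The dominance $\lambda \trianglerighteq \beta$ combined with the inductive hypothesis forces $\sum_{i=1}^{k} \lambda_i \ge \sum_{i=1}^{k} \beta_i$, hence $\lambda_k \ge \beta_k$ and $w \le v$. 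The task therefore reduces to ruling out $w < v$.

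Suppose $w < v$. Then $\nu_w \ge \nu_{v-1} \ge k$ (the second inequality from $k \in I_v \subseteq [\nu_v+1,\nu_{v-1}]$), so $I_w = [\nu_w+1,\nu_{w-1}]$ is disjoint from $[1,k-1]$. By the inductive hypothesis, the positions $i < k$ with $\beta_i = (d+w-2)m$ are exactly $I_w \cap [1,k-1] = \varnothing$. Using $\beta^+ = \lambda$, I then compute
\[
r_\beta(k) = \#\{i : \beta_i > \beta_k\} + \#\{i \le k : \beta_i = \beta_k\} = \nu_w + 1,
\]
since $\nu_w = \sum_{j>w}\tau_j$ counts the positions where $\beta$ takes a value exceeding $(d+w-2)m$. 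The equation $R_\beta(k) = R_\lambda(k)$ then reads $\nu_w + 1 - k = nm(v-w)$. Since $k \ge \nu_v + 1$ (with the convention $\nu_{K+1} = 0$), this yields $nm(v-w) \le \nu_w - \nu_v$. Definition \ref{DefTauNu} provides the uniform bound $\nu_w - \nu_v \le (v-w)(n-1)$: for $v \le K$ this is an equality, and for $v = K+1$ it follows from $\nu_w = \nu_K + (K-w)(n-1)$ together with $\nu_K \le n-1$. Dividing by $v - w > 0$ gives $nm \le n - 1$, contradicting $m \ge 1$ and $n \ge 2$. Hence $w = v$ and $\beta_k = \lambda_k$, completing the induction.

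The only real obstacle is verifying the rank computation $r_\beta(k) = \nu_w + 1$, for which the inductive hypothesis is used precisely to exclude extra equal-value positions to the left of $k$; once this is in hand, the boundary case $v = K+1$ is absorbed by the uniform estimate on $\nu_w - \nu_v$, and the contradiction $nm \le n-1$ is one line.
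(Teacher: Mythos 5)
Your proof is correct and follows essentially the same strategy as the paper's: an inductive sweep along the composition using the rank equation $R_\beta=R_\lambda$ at a single index together with a count of the positions carrying each value, ending in the same numerical contradiction $n(v-w)\leq(n-1)(v-w)$. The only differences are cosmetic: you induct position-by-position where the paper inducts interval-by-interval (evaluating $R$ at the endpoints $\nu_{K},\nu_{u-1}$), and you retain the factor $m$ which the paper has already normalized to $1$ in this section via Lemma~\ref{(m,n)(1,n)}; neither affects the argument.
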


\begin{proof}By definition, $R_{\lambda}(\nu_{K})=\nu_{K}+n(d+K-1)$, and since $R_{\lambda}=R_{\beta}$, we also have that $R_{\lambda}(\nu_{K})=r_{\beta}(\nu_{K})+n\beta_{\nu_{K}}$. Suppose $\beta_{\nu_{K}}=d+u-2$, with $u<K+1$. Using that $\# \{s \colon \beta_{s}>d+u-2 \}=\# \{s \colon \beta_{s}^{+}>d+u-2 \}=\nu_{u}$, we obtain that
\begin{align*}
r_{\beta}(\nu_{K})& =\# \{s \colon s\leq\nu_{K},\,\beta_{s}=d+u-2 \}+\# \{s \colon \beta_{s}>d+u-2 \} \\
& =\# \{s \colon s\leq\nu_{K},\, \beta_{s}=d+u-2 \}+\nu_{u}\leq\nu_{K}+\nu_{u},
\end{align*}
Putting all together, we get the inequality $\nu_{K}+n(d+K-1)-n(d+u-2)\leq \nu_{K}+\nu_{u}$, which contradicts the condition $\beta_{\nu_{K}} <\lambda_{\nu_{K}}$. Therefore, $\beta_{\nu_{K}}=d+K-1$ and $r_{\beta}(\nu_{K})=\nu_{K}$. The hypothesis $\lambda\trianglerighteq\beta$ implies that
$\beta_{i}\leq d+K-1$ and so, $\beta_{i}=d+K-1$, for $1\leq i\leq\nu_{K}$.

Arguing inductively, suppose $\beta_{i}=\lambda_{i}$, for $i\in \bigcup\limits_{s=u+1}^{K+1}I_{s}$. The possible nonzero values of $\beta$ on~$I_{u}$ are $d+s-2$, for $2\leq s\leq u$. Consider $R_{\beta}(\nu_{u-1})=R_{\lambda}(\nu_{u-1})=\nu_{u-1}+n(d+u-2)$. A similar argument shows that $\beta_{i}=d+u-2$, for $i\in I_{u}$, and therefore $\beta=\lambda$.
\end{proof}

\subsection[The pairs $(\alpha(\Theta_{j,k}),\beta)$: Case $j>1$]{The pairs $\boldsymbol{(\alpha(\Theta_{j,k}),\beta)}$: Case $\boldsymbol{j>1}$}\label{Pairsjlarger1}

In this case, we look at the set $I_{j}\cup I_{j+1}$, with $j>1$, by splitting it into four intervals. These intervals, together with their key properties, are:

\begin{itemize}\itemsep=0pt
\item $E_{1}= [\nu_{j+1}+1,\nu_{j}-k ]$, with $E_{1}=\varnothing$, for $k=\tau_{j+1}$. For $i\in E_{1}$, $\mu_{i}= d+j-1$ and $r_{\mu}(i)=i$. Moreover, $\#E_{1}=\tau_{j+1}-k$.

\item $E_{2}= [\nu_{j}-k+1,\nu_{j-1}-2k+1 ]$. For $i\in E_{2}$, $\mu_{i}=d+j-2$ and $r_{\mu}(i)=i+k$. Moreover, $\#E_{2}=n-k$.

\item $E_{3}= [\nu_{j-1}-2k+2,\nu_{j-1}-k+1 ]$. If $i\in E_{3}$, $\mu_{i}=d+j-1$ and $r_{\mu}(i)=i-n+k$. Moreover, $\#E_{3}=k$.

\item $E_{4}= [\nu_{j-1}-k+2,\nu_{j-1} ]$, with $E_{4} =\varnothing$, for $k=1$. For $i\in E_{4}$, $\mu_{i}=d+j-2$ and $r_{\mu}(i)=i$. Moreover, $\#E_{4}=k-1$.
\end{itemize}

Recall that our goal is to describe the possible compositions $\beta$ such that $R_{\mu}= R_{\beta}$. We claim that there is a unique such $\beta$ and that is of the form:
\begin{gather*}
\beta_{i}=
\begin{cases}
\mu_{i} +1 & \text{for } i\in I_{u}, \text{ with } 1<u<j,\\
d+j-1 & \text{for } i\in E_{1}\cup E_{4},\\
0 & \text{for } i \in E_{2}\cup E_{3},\\
\mu_{i} & \text{for } i\in I_{u}, \text{ with } u>j+1,\\
1 & \text{for } \nu_{1}<i \leq N+j.
\end{cases}
\end{gather*}

To prove that such $\beta$ satisfies $R_{\beta}=R_{\mu}$ it suffices to check a few cases:
\begin{itemize}\itemsep=0pt
\item If $i\leq\nu_{j+1}$ or $i\in E_{1}$, then $\lambda_{i}=\beta_{i}$ and $r_{\mu}(i)=i=r_{\beta}(i)$. Note that if $E_{2}=E_{3}$ is excluded, then $\beta$ is nonincreasing.

\item If $i>\nu_{j-1}$ or $i\in E_{4}$, then $r_{\beta}(i)=i-n$. As a consequence, $R_{\beta}(i)=r_{\beta}(i)+n\beta_{i}= i-n+n(\mu_{i}+1) = R_{\mu}(i)$.

\item If $i=\min E_{2}$, then $R_{\beta}(i)=N+j-n+1$ and $R_{\mu}(i)=\nu_{j}+1+n(d+j-2) =N+j-n+1$.

\item If $i=\max E_{3}$, then $R_{\beta}(i)=N+j$ and $R_{\mu}(i) =\nu_{j}+n(d+j-1)=N+j$.
\end{itemize}

The challenge is to prove the uniqueness of $\beta$.

Our first step is to extend the maximum principle to $B\cap (I_{j}\cup I_{j+1} )$. For that, we describe the analogues of the intervals $[a_{u},b_{u}]$ and $[s_{u},t_{u}]$ for $B\cap E_{i}$, $1\leq i\leq4$.
\begin{itemize}\itemsep=0pt
\item $B\cap E_{1}=[a_{j+1},b_{j+1}]$ and $[s_{j+1},t_{j+1}]=[\nu_{j}-b_{j+1},\nu_{j}-a_{j+1}]$, with $k\leq s_{j+1}\leq t_{j+1}\leq \tau_{j+1}-1$.

\item $B\cap E_{2}=[a_{j}^{\prime}, b_{j}^{\prime}]$ and $[s_{j}^{\prime},t_{j}^{\prime}]=[\nu_{j-1}
-k-b_{j}^{\prime}, \nu_{j-1}-k-a_{j}]$, with $k-1\leq s_{j}^{\prime}\leq t_{j}^{\prime}\leq n-2$.

\item $B\cap E_{3}=[a_{j+1}^{\prime},b_{j+1}^{\prime}]$ and $[s_{j+1}^{\prime},t_{j+1}^{\prime}]= [\nu_{j-1}-k+1-b_{j+1}^{\prime},\nu_{j-1}-k+1-a_{j+1}^{\prime}]$, with $0\leq s_{j+1}^{\prime}\leq t_{j+1}^{\prime}\leq k-1$.

\item $B\cap E_{4}=[a_{j},b_{j}]$ and $[s_{j},t_{j}]=[\nu_{j-1}-b_{j},\nu_{j-1}-a_{j}]$, with $0\leq s_{j}\leq t_{j}\leq k-2$.
\end{itemize}

If $B\cap E_{i}$ is empty for some $i$, the corresponding interval is omitted.
We introduce a \emph{shorthand notation} for the possible states of $B\cap E_{s}$: set $\mathbf{b}=(\mathbf{b}_{i})_{i=1}^{4}$ where $\mathbf{b}_{i}=1$ if $B\cap E_{i}\neq\varnothing$, $\mathbf{b}_{i}=0$ if $B\cap E_{i}=\varnothing$, and $\mathbf{b}_{i}=\ast$ if either is possible.

We list here the consequences of the rank equation $R_{\beta}= R_{\mu}$ according to the possible values of $\mathbf{b}$.
\begin{itemize}\itemsep=0pt
\item For $\mathbf{b}=(11\ast\ast)$, $a_{j}^{\prime}=b_{j+1}+n-k+1$ and $s_{j+1}=t_{j}^{\prime}+2$, so then $[s_{j}^{\prime},t_{j}^{\prime}]\ll[s_{j+1},t_{j+1}]$. Note also that $r_{\mu}(a_{j}^{\prime})=a_{j}^{\prime}+k$.
\item For $\mathbf{b}=(\ast\ast11)$, $a_{j}=b_{j+1}^{\prime}+k+1$ and $s_{j+1}^{\prime}=t_{j}+2$, so then $[s_{j},t_{j}] \ll[s_{j+1}^{\prime}, t_{j+1}^{\prime}]$. Note also that $r_{\mu}(b_{j+1}^{\prime})=b_{j+1}^{\prime}-n+k$.

\item For $\mathbf{b}=(101\ast)$, $a_{j+1}^{\prime}=b_{j+1}+n-k+1$ and $s_{j+1}=t_{j+1}^{\prime}+1$. Moreover, $[s_{j+1}^{\prime},t_{j+1}^{\prime}]$ and $[s_{j+1},t_{j+1}]$ are contiguous and $\#(B\cap(E_{1}\cup E_{3})) =\#[s_{j+1}^{\prime},t_{j+1}]$. Also $b_{j+1}=\nu_{j}-k$ and $a_{j+1}^{\prime}=\nu_{j-1}-2k+2$ since $\#E_{2}=n-k$.

\item For $\mathbf{b}=(\ast101)$, $a_{j}=b_{j}^{\prime}+k+1$ and $s_{j}^{\prime}=t_{j}+1$. Then $[s_{j},t_{j}]$ and $[s_{j}^{\prime},t_{j}^{\prime}]$ are contiguous and $\#(B\cap(E_{2}\cup E_{4}))=\#[s_{j},t_{j}^{\prime}]$. Also $b_{j}^{\prime}=\nu_{j-1}-2k+1$ and $a_{j}=\nu_{j-1}-k+2$.

\item For $\mathbf{b}=(\ast11\ast)$, $a_{j+1}^{\prime}=b_{j}^{\prime}+1$ and $t_{j+1}^{\prime}=s_{j}^{\prime}$. Also $a_{j+1}^{\prime}=\nu_{j-1}-2k+2,b_{j}^{\prime}=\nu_{j-1}-2k+1$ and $s_{j}^{\prime}=k-1$. Thus,
$\#(B\cap(E_{2}\cup E_{3}))=\#[s_{j+1}^{\prime},t_{j}^{\prime}]+1$.
\end{itemize}

We give more detail on those cases with more non-empty intersection.
\begin{itemize}\itemsep=0pt
\item For $\mathbf{b}=(\ast111)$, $s_{j+1}^{\prime}=t_{j}+2$ and $[s_{j},t_{j}]\ll[s_{j+1}^{\prime},t_{j}^{\prime}]$.
Furthermore,
\begin{gather*}
\#B\cap (E_{2}\cup E_{3}\cup E_{4} )= (t_{j}-s_{j}
+1 )+ (t_{j}^{\prime}-s_{j+1}^{\prime}+2 )=t_{j}^{\prime}-s_{j}+1 =\# [s_{j},t_{j}^{\prime} ],
\end{gather*}
giving an upper bound of $n-1$.

\item For $\mathbf{b}=(111\ast)$, $s_{j+1}=t_{j}^{\prime}+2$ and $[s_{j+1}^{\prime},t_{j}^{\prime}]\ll[s_{j+1},t_{j+1}]$.
Moreover,
\begin{gather*}
\#B\cap (E_{1}\cup E_{2}\cup E_{3} )= (t_{j+1}-s_{j+1}+1 )+ (t_{j}^{\prime}-s_{j+1}^{\prime}+2 )\\
\hphantom{\#B\cap (E_{1}\cup E_{2}\cup E_{3} )}{}
=t_{j+1}-s_{j+1}^{\prime}+1=\# [s_{j+1}^{\prime},t_{j+1} ].
\end{gather*}

\item For $\mathbf{b}=(1111)$, $[s_{j},t_{j}]\ll[s_{j+1}^{\prime},t_{j}^{\prime}]\ll[s_{j+1},t_{j+1}]$.
Furthermore,
\begin{gather*}
\#B\cap (E_{1}\cup E_{2}\cup E_{3}\cup E_{4} )= (
t_{j+1}-s_{j+1}+1 )+ (t_{j}^{\prime}-s_{j+1}^{\prime}+2 )+ (t_{j}-s_{j}+1 )\\
\hphantom{\#B\cap (E_{1}\cup E_{2}\cup E_{3}\cup E_{4} )}{} =t_{j+1}-s_{j}=\# [s_{j},t_{j+1} ]-1.
\end{gather*}
\end{itemize}

The next three results give us an estimate for the size of $B$, obtained by studying the cases depending on its intersection with the intervals $E_{i}$. The bounds for $\#B$ are analyzed by arguments depending on which of the intersections of~$B$ with $E_{1}$, $E_{2}$, $E_{3}$, $E_{4}$ are non-empty. Some of the 16 possibilities can be combined for this purpose. Our first result cover the case when $B\cap(E_{2}\cup E_{3})=\varnothing$ and it can be obtained reproducing the proof for Proposition~\ref{maxp1} and Corollary~\ref{maxp2}. Note that the case $B\subset E_{1}\cup E_{4}$ implies the states $(100\ast)$ or $(\ast001)$ and is treated in the following result.
\begin{Corollary} \label{SizeB1} If $\mathbf{b}=(100\ast)$ or $\mathbf{b}=(\ast001)$ and $B\cap I_{u}\neq\varnothing$ for at least one value of $u$, then $\#B\leq n-2$. If, additionally, $B\cap I_{K+1}\neq\varnothing$, then $\#B\leq\nu_{K}-1$.
\end{Corollary}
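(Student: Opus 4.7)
The plan is to imitate the proofs of Proposition~\ref{maxp1} and Corollary~\ref{maxp2}, treating the slots $E_1$ and $E_4$ as truncated substitutes for pieces of $I_{j+1}$ and $I_j$. The crucial identity enabling this transfer is that on $E_1$ one has $\mu_i=d+j-1$ and $r_\mu(i)=i$, exactly the values that would govern an $I_{j+1}$-piece in the rank computation of Proposition~\ref{maxp1}; symmetrically, on $E_4$ one has $\mu_i=d+j-2$ and $r_\mu(i)=i$, the values appropriate to an $I_j$-piece. Both identities are immediate from the tabular description of $\Theta_{j,k}$, so the rank-difference calculation of Proposition~\ref{maxp1} transfers verbatim to pairs of $B$-endpoints living in any combination of $E_1$, $E_4$, or the usual $I_u$ with $u\neq j,j+1$.

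Consider $\mathbf{b}=(100\ast)$; the case $(\ast 001)$ is handled identically with $E_4$ playing the role of $E_1$. Because $B\cap E_2=B\cap E_3=\varnothing$, the nonempty pieces of $B$ are $B\cap E_1$, possibly $B\cap E_4$, and possibly some $B\cap I_u$ with $u\neq j,j+1$. Assign to each piece a \emph{slot index}: $u=j+1$ for the $E_1$-piece, $u=j$ for the $E_4$-piece, and $u$ for a $B\cap I_u$-piece; enumerate the slot indices as $u_1>u_2>\cdots>u_p$. For consecutive indices $u_a>u_{a+1}$, the $u_a$-piece lies at smaller positions and the $u_{a+1}$-piece at larger positions with no element of $B$ in between. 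Applying $R_\mu=R_\beta$ to the leftmost element $a_{u_{a+1}}$ of the $u_{a+1}$-piece and the rightmost element $b_{u_a}$ of the $u_a$-piece reproduces the calculation of Proposition~\ref{maxp1} and yields $s_{u_a}-t_{u_{a+1}}=u_a-u_{a+1}+1\geq 2$, so $[s_{u_{a+1}},t_{u_{a+1}}]\ll[s_{u_a},t_{u_a}]$. Iterating produces a strictly separated chain $[s_{u_p},t_{u_p}]\ll\cdots\ll[s_{u_1},t_{u_1}]\subset[0,t_{u_1}]$.

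For $p\geq 2$, counting along the chain gives
\begin{gather*}
\#B=\sum_{a=1}^p(t_{u_a}-s_{u_a}+1)\leq t_{u_1}-s_{u_p}+2-p\leq t_{u_1},
\end{gather*}
and since $u_1\geq j+1\geq 2$ one has $t_{u_1}\leq n-2$, sharpened to $t_{u_1}\leq\nu_K-1$ when $u_1=K+1$. For $p=1$ the unique piece is $E_1$, and the intrinsic bounds $s_{j+1}\geq k\geq 1$ and $t_{j+1}\leq\tau_{j+1}-1$ give $\#B\leq\tau_{j+1}-k\leq n-2$ directly, with $\#B\leq\nu_K-1$ when $j+1=K+1$; the symmetric treatment for $(\ast 001)$ uses $s_j\geq 0$ and $t_j\leq k-2$ in the $p=1$ case. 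The only real obstacle, though mild, is the verification that $\mu_i$ and $r_\mu(i)$ take the asserted values on $E_1$ and $E_4$; once this case analysis from the definition of $\Theta_{j,k}$ is done, the counting is a direct replica of Corollary~\ref{maxp2}.
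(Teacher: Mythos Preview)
Your proof is correct and follows exactly the approach the paper sketches: the paper omits a formal proof of this corollary, stating only that ``it can be obtained reproducing the proof for Proposition~\ref{maxp1} and Corollary~\ref{maxp2}''; you carry out precisely that reproduction, treating $E_1$ and $E_4$ as surrogate pieces with slot indices $j+1$ and $j$ respectively, which works because on $E_1$ and $E_4$ one has $r_\mu(i)=i$ and $\mu_i=d+(j+1)-2$, $d+j-2$ just as on a genuine $I_{j+1}$, $I_j$. One minor remark: under the stated hypothesis ``$B\cap I_u\neq\varnothing$ for at least one value of $u$'' (with $u\neq j,j+1$), your slot count always satisfies $p\geq 2$, so your separate $p=1$ analysis is not needed---though it does no harm and in fact shows the bound holds even without that hypothesis.
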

Observe that if $\mathbf{b}= (1001 )$, then $s_{j+1}=t_{j}+2$ and $[s_{j},t_{j}]\ll [s_{j+1},t_{j+1} ] $. Moreover, the case $B \subset E_1\cup E_4$ implies the states $(100\ast)$ or $(\ast 001)$ and it is treated in Corollary~\ref{SizeB1}.

Now, we look at the case in which $B\cap (E_{2}\cup E_{3} )\neq \varnothing$, splitting it into two cases depending on the intersection of~$B$ with the intervals $I_u$.
\begin{Proposition}\label{SizeB2} If $B\cap(E_{2}\cup E_{3})\neq\varnothing$ and at least one $B\cap I_{u}\neq\varnothing$, for some $u\neq j,j+1$, then $\#B\leq n-2$.
\end{Proposition}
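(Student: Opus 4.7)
The plan is to extend the maximum-principle argument of Proposition~\ref{maxp1} and Corollary~\ref{SizeB1} to the hybrid situation where one block of $B$ sits inside some $I_u$ with $u\neq j,j+1$ while at least one other block of $B$ meets $E_2\cup E_3$. The guiding observation is that every nonempty block of $B$, whether of the form $B\cap I_u$ or $B\cap E_s$, carries an associated $[s,t]$-interval; the rank equation $R_\mu=R_\beta$ forces these intervals to be pairwise $\ll$-separated, so the whole collection packs into a universe of size $n-1$ with at least one forced gap.

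First, I would perform the case analysis over the states $\mathbf{b}\in\{0,1\}^{4}$ with $\mathbf{b}_2=1$ or $\mathbf{b}_3=1$. For the states $(11\ast\ast)$, $(\ast\ast11)$, $(101\ast)$, $(\ast101)$, $(\ast11\ast)$ and the three refined cases $(\ast111)$, $(111\ast)$, $(1111)$, the chain relations among the $E$-intervals are already recorded in the excerpt and land inside $[0,n-2]$. The two remaining states $(0100)$ and $(0010)$ contribute a single $E$-interval each, of range $[k-1,n-2]$ and $[0,k-1]$ respectively, and so also lie in $[0,n-2]$.

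The heart of the proof is the bridging step linking $[s_u,t_u]$ to the $E$-chain. Let $[s_\ast,t_\ast]$ denote the boundary $E$-interval on the side adjacent to $I_u$. Because $B\cap I_s=\varnothing$ for every $s$ strictly between $u$ and the block labeled by $\ast$, I would reapply the telescoping computation from Proposition~\ref{maxp1}, substituting at the boundary the $E$-specific values of $\mu_i$ and $r_\mu(i)$ (shifted by $+k$ on $E_2$, by $-n+k$ on $E_3$, and unshifted on $E_1$, $E_4$). The rank equation then collapses to a single linear identity whose solution yields a strict separation $s_u-t_\ast\geq 1$, establishing $[s_\ast,t_\ast]\ll[s_u,t_u]$.

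Concatenating the internal $E$-chain with the bridged $[s_u,t_u]$ and any further $[s_{u'},t_{u'}]$ linked by Proposition~\ref{maxp1} produces a family of pairwise $\ll$-separated intervals inside $[0,n-2]$ with at least one forced gap coming from the bridging step. Since $\#[0,n-2]=n-1$ and each gap removes at least one integer, the conclusion $\#B\leq n-2$ follows. The main obstacle is the bookkeeping in the corner cases $u=1$ and $u=K+1$, where the a priori bounds $t_1\leq nd-2$ and $t_{K+1}\leq\nu_K-1$ are not immediately within $[0,n-2]$; in both cases the $\ll$-separation with the neighboring block drives $t_u$ back into the admissible range, and the final estimate still holds.
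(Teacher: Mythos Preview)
Your approach is essentially the paper's: package each nonempty $B\cap I_u$ and $B\cap E_s$ as an $[s,t]$-interval, use the rank equation $R_\mu=R_\beta$ to force $\ll$-separations, and pack everything into $[0,n-2]$ with at least one gap. The paper organizes the bridging step by splitting first on whether the nearest outside interval $I_u$ has $u>j+1$ or $u<j$, and then within each case reads the configurations $\mathbf{b}$ from the relevant end; the arithmetic is the same as what you sketch.

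One point you must tighten: you assert that the rank identity gives $s_u-t_\ast\geq1$ and that this establishes $[s_\ast,t_\ast]\ll[s_u,t_u]$. By the paper's definition, $\ll$ requires a gap of at least two ($t_\ast<s_u-1$), so $\geq1$ is not enough---contiguous intervals would only yield $\#B\leq n-1$. In fact the rank computation gives larger gaps (e.g., $s_u-t_j'=1+u-j\geq3$ when $\mathbf{b}=(01\ast\ast)$ and $u>j+1$; $s_j-t_u=j-u+1\geq2$ when $\mathbf{b}=(\ast\ast\ast1)$ and $u<j$), and these extra units are needed precisely to absorb the $+1$ in $\#(B\cap(E_2\cup E_3))=\#[s_{j+1}',t_j']+1$ that the state $(\ast11\ast)$ produces. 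You should record the exact gaps, not just a generic strict inequality. Also, your corner-case remark on $u=K+1$ is off: since $\nu_K\leq n-1$, the bound $t_{K+1}\leq\nu_K-1$ already lies in $[0,n-2]$, so no rescue is needed there; the genuine bookkeeping issue is only $u=1$, and there (with $u<j$) the $\ll$-relation with the $E$-chain bounds $t_1$ from above, exactly as you say.
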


\begin{proof}
First, we consider the case $B\cap I_{u}\neq\varnothing$, with $u>j+1$ such that $B\cap I_{s}=\varnothing$, for $j+1<s<u$. We look at the possible configurations of $\mathbf{b}$ starting from the left. If $\mathbf{b}=(1\ast\ast\ast)$, by Proposition~\ref{maxp1}, $s_{u}-t_{j+1}=u-j\geq2$ and $ [s_{j+1},t_{j+1} ]\ll [s_{u},t_{u} ]$. Furthermore, $\#B\cap (E_{1}\cup E_{2}\cup E_{3}\cup E_{4} )=\# [s_{j+1}^{\prime},t_{j+1} ]$ or $\# [s_{j},t_{j+1} ]-1$, depending on $B\cap E_{4}$. If $\mathbf{b}= (01\ast\ast)$, then the rank equations show that $s_{u}-t_{j}^{\prime}=1+u-j\geq3$ and $[s_{j}^{\prime},t_{j}^{\prime}+1 ]\ll [s_{u},t_{u} ]$. If $\mathbf{b}=(\ast11\ast)$, then $\# (B\cap (E_{2}\cup E_{3} ) )=\# [s_{j+1}^{\prime},t_{j}^{\prime} ]+1$.
Finally, if $\mathbf{b}= (001\ast )$, then $s_{u}\geq t_{j+1}^{\prime}+2$ and $ [s_{j+1}^{\prime},t_{j+1}^{\prime} ]\ll [s_{u},t_{u} ]$.

Now, consider the other case, $B\cap I_{u}\neq\varnothing$, with $u<j$ such that $B\cap I_{s}=\varnothing$, for $j>s>u$. Again, we look at the possible configurations of $\mathbf{b}$ starting now from the right. If $\mathbf{b}= (\ast\ast\ast1 )$, then $s_{j}-t_{u}=j-u+1$, even when $u=1$, and $ [s_{u},t_{u} ]\ll [s_{j},t_{j} ]$. If $\mathbf{b}= (\ast\ast10 )$, then $s_{j+1}^{\prime}-t_{u}=j-u+2\geq3$. Thus, $ [s_{u},t_{u} ]\ll [ s_{j+1}^{\prime}-1,t_{j+1}^{\prime} ]$ and $\# (B\cap (E_{2}\cup E_{3} ) )=\# [s_{j+1}^{\prime}-1,t_{j}^{\prime} ]$ or $\# [s_{j+1}^{\prime},t_{j+1}^{\prime} ]$. Finally if $\mathbf{b}= (\ast100 )$ then $s_{j}^{\prime}-t_{u}=j-u+1\geq2$ and $ [s_{u},t_{u} ]\ll [s_{j}^{\prime},t_{j}^{\prime} ]$.

In all cases, $B$ has the same cardinality as a union of disjoint subintervals of $ [0,n-2 ]$, with gaps of at least one between adjacent subintervals. Thus, $\#B\leq n-2$ and $\leq\nu_{K}-1$, if $B\cap I_{K+1}\neq\varnothing$.
\end{proof}
Now, we consider the case in which $B\cap (E_{2}\cup E_{3})\neq\varnothing$ but the intersection of $B$ with the intervals~$I_u$ is empty.
\begin{Proposition}\label{SizeB3} We list here the exceptional cases, for which $B\cap I_{u}=\varnothing$, for $u\neq j,j+1$.
\begin{itemize}\itemsep=0pt
\item For $\mathbf{b}=(0\ast\ast0)$, $\#B\leq n$ with $\#B=n$ if and only if $B=E_{2}\cup E_{3}$.
\item For $\mathbf{b}=(1110)$, $\#B\leq n-1$.
\item For $\mathbf{b}=(0111)$, $\#B\leq n-1$.
\end{itemize}
\end{Proposition}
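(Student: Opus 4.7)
The plan is to exploit the rank-equation consequences already tabulated in the bullet list preceding the proposition. For each admissible configuration $\mathbf{b}$ those consequences collapse $\#B$ into the size of a single integer interval in the $s$-coordinates, so the three claimed bounds will follow from the a priori ranges on the $s_u^{(\prime)},t_u^{(\prime)}$, together with the crude sizes $\#E_1 = \tau_{j+1}-k$, $\#E_2 = n-k$, $\#E_3 = k$, $\#E_4 = k-1$.

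For $\mathbf{b} = (0\ast\ast 0)$ there is nothing to extract from the rank equations: $B \cap (E_1 \cup E_4) = \varnothing$ forces $B \subseteq E_2 \cup E_3$, and $\#(E_2 \cup E_3) = (n-k) + k = n$, which gives $\#B \leq n$ at once. Equality forces $B = E_2 \cup E_3$, in particular $\mathbf{b} = (0110)$; the residual sub-cases $(0100)$, $(0010)$, $(0000)$ give $\#B \leq n-k$, $\#B \leq k$, $\#B = 0$ respectively, all strictly less than $n$ since $1 \leq k \leq \tau_{j+1} \leq n-1$. One should also verify that $B = E_2 \cup E_3$ is actually compatible with the $(\ast 11 \ast)$ rank equations, but a direct substitution shows $s_j^\prime = k-1$, $t_{j+1}^\prime = k-1$, $b_j^\prime = \nu_{j-1}-2k+1$, $a_{j+1}^\prime = \nu_{j-1}-2k+2$, matching the tabulated requirements.

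For $\mathbf{b} = (1110)$ I apply the identity $\#(B \cap (E_1 \cup E_2 \cup E_3)) = \#[s_{j+1}^\prime, t_{j+1}]$ listed just before the proposition; since $\mathbf{b}_4 = 0$ this equals $\#B$. Combining $s_{j+1}^\prime \geq 0$ with $t_{j+1} \leq \tau_{j+1} - 1$, and noting that the standing hypothesis $j > 1$ gives $\tau_{j+1} = n-1$ when $j+1 \leq K$ and $\tau_{K+1} = \nu_K \leq n-1$, I conclude $\#B \leq n-1$. The case $\mathbf{b} = (0111)$ is symmetric: the tabulated identity gives $\#B = \#[s_j, t_j^\prime]$, and then $s_j \geq 0$ together with $t_j^\prime \leq n-2$ yields $\#B \leq n-1$.

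The only real work is the bookkeeping in the previous paragraph: one must confirm that the intervals $[s_{j+1}^\prime, t_{j+1}]$ and $[s_j, t_j^\prime]$ truly absorb all the rank-equation content with no loss (so that no subtler gap constraint is missed), and that the extremal bound $\tau_{j+1} \leq n-1$ is correctly invoked. Both are quick once one unwinds the shape $\tau = (dn-1,(n-1)^{K-1},\nu_K)$ and uses $j \geq 2$. The main obstacle, such as it is, is simply to resist the temptation to repeat the case enumeration from scratch: the pre-tabulated formulas already do almost all the work.
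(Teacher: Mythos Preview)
Your argument is correct and is exactly the one the paper intends: the proposition is stated without proof precisely because the pre-tabulated identities $\#\bigl(B\cap(E_1\cup E_2\cup E_3)\bigr)=\#[s_{j+1}',t_{j+1}]$ and $\#\bigl(B\cap(E_2\cup E_3\cup E_4)\bigr)=\#[s_j,t_j']$, together with the crude inclusion $B\subseteq E_2\cup E_3$ for $\mathbf{b}=(0{\ast}{\ast}0)$, reduce the bounds to the a~priori ranges $0\leq s_{j+1}'$, $t_{j+1}\leq\tau_{j+1}-1\leq n-2$ and $0\leq s_j$, $t_j'\leq n-2$. Your handling of the equality case and of the shape constraint $\tau_{j+1}\in\{n-1,\nu_K\}$ is also correct.
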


Finally, if we are not in any case included in Corollary~\ref{SizeB1} or Propositions~\ref{SizeB2} and~\ref{SizeB3}, $B\subset I_{u}$ for some $u\neq j,j+1$ or $B\subset E_{s}$, with $1\leq s\leq4$.

Our next step is to analyze the implications of these results with respect to the possible compositions $\beta$. First, we notice that since we are assuming $j>1$, we know that for $\mu$, $\ell(\mu)= \nu_{1}=N-(nd-1)$, and so $\mu_{i}=0$, for $i>\nu_{1}$. Next lemma tells us this information for~$\beta$.

\begin{Lemma}\label{Betai}Either $\beta_{i}=0$ for all $i\geq\nu_{1}+1$ or $\beta_{i}=1$
for $\nu_{1}+1\leq i\leq\ell(\beta)$. Moreover, in the last case, $r_{\beta}(\nu_{1}+1)=\nu_{1}+1-n$ and $ \{ i\colon 1\leq i\leq\nu_{1},\beta_{i}=0 \}=E_{2}\cup E_{3}$.
\end{Lemma}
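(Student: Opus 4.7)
The plan is to analyze $b := \beta_{\nu_1+1}$ using the rank equation coming from $R_\mu = R_\beta$, and force $b \in \{0,1\}$ via a sharp maximum--principle bound. Since $\mu = \alpha(\Theta_{j,k})$ is a permutation of $\lambda$ (Definition \ref{DefAlpha}), we have $\ell(\mu) = \nu_1$, so $R_\mu(\nu_1+1) = \nu_1+1$ by the very definition of $R_\mu$. Equating with $R_\beta(\nu_1+1) = r_\beta(\nu_1+1) + nb$ immediately yields $r_\beta(\nu_1+1) = \nu_1+1 - nb$, which is already the auxiliary formula asserted in the second case.

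Expanding the rank via the identity $r_\beta(i) = i - \#\{k \leq i : \beta_k < \beta_i\} + \#\{k > i : \beta_k > \beta_i\}$ at $i = \nu_1+1$ and using $\beta_{\nu_1+1} = b$, we obtain
\begin{equation*}
\sum_{c=0}^{b-1} N_c \;=\; nb + \#\{k > \nu_1+1 : \beta_k > b\} \;\geq\; nb,
\end{equation*}
where $N_c := \#\{k \leq \nu_1 : \beta_k = c\}$. The core step is to prove the matching upper bound $N_c \leq n$ for every $c \geq 0$, with equality if and only if $B_c \cap \{i \leq \nu_1\} = E_2 \cup E_3$. Since the $B_c$ are pairwise disjoint, at most one index $c$ achieves this equality, so $\sum_{c=0}^{b-1} N_c \leq n + (b-1)(n-1) = nb - b + 1$. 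Combined with the lower bound this forces $b \leq 1$.

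If $b = 0$, the displayed identity collapses to $\#\{k > \nu_1+1 : \beta_k > 0\} = 0$, and with $\beta_{\nu_1+1} = 0$ this gives $\beta_k = 0$ for every $k \geq \nu_1+1$. If $b = 1$, both bounds become equalities: $N_0 = n$ forces $\{i \leq \nu_1 : \beta_i = 0\} = E_2 \cup E_3$ via the equality case, while $\#\{k > \nu_1+1 : \beta_k > 1\} = 0$ together with $\beta_{\nu_1+1} = 1$ yields $\beta_k \in \{0,1\}$ for every $k > \nu_1$. Lemma \ref{Lem:betaizero} then confines the zeros among positions $k > \nu_1$ to a terminal segment, giving $\beta_k = 1$ for $\nu_1+1 \leq k \leq \ell(\beta)$.

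The technical heart --- and the main obstacle --- is the bound $N_c \leq n$ with its sharp equality condition. I would handle it by case analysis on where $B_c$ meets the intervals: if $B_c$ meets $I_1$ nontrivially, Corollary \ref{SizeB1} and Proposition \ref{SizeB2} (together with the trivial case $B_c \subseteq I_1$, which gives $N_c = 0$) yield $N_c \leq n - 2$; if $B_c \subseteq \{i \leq \nu_1\}$, Corollary \ref{maxp2} handles configurations meeting at least two intervals $I_u$ outside $I_j \cup I_{j+1}$, and Proposition \ref{SizeB3} handles configurations entirely inside $E_1 \cup E_2 \cup E_3 \cup E_4$, where the state $(0110)$ with $B_c = E_2 \cup E_3$ is the unique attainer of $\#B_c = n$. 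The delicate point is that the equality condition is sharp enough that at most one $c$ can saturate it, which is precisely what makes the bookkeeping $nb \leq nb - b + 1$ pin down $b \in \{0,1\}$.
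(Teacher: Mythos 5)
Your argument is correct and follows essentially the same route as the paper's proof: the rank equation at $i=\nu_{1}+1$, the maximum-principle bound $\#\{i\le\nu_{1}\colon\beta_{i}=c\}\le n$ with equality only for $E_{2}\cup E_{3}$ (hence attained by at most one $c$), the resulting inequality forcing $b\le 1$, and the same split into the cases $b=0$ and $b=1$. Your explicit appeal to Lemma~\ref{Lem:betaizero} to rule out $\beta_{i}=0$ for $\nu_{1}<i\le\ell(\beta)$ is a slightly more careful justification of a step the paper states tersely, but it is not a different method.
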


\begin{proof}Let $b=\beta_{\nu_{1}+1}$. By the rank equation, $r_{\beta}(\nu_{1}+1)+nb= (\nu_{1}+1)$, since $R_{\beta}=R_{\mu}$ and $\mu_{\nu_{1}+1}=0$. Then, $\nu_{1}+1-nb=r_{\beta}(\nu_{1}+1)\geq1$. By definition,
\begin{gather*}
r_{\beta}(\nu_{1}+1)=\nu_{1}+1-\#\{ i\colon 1\leq i\leq\nu_{1},\, \beta_{i}<b\}+\#\{ i\colon i>\nu_{1},\, \beta_{i}>b\}.
\end{gather*}
We already know that $\# \{ i\colon 1\leq i\leq\nu_{1},\,\beta_{i}=c \}\leq n-1$, with one possible exception of $n$, in which we have exactly $E_{2}\cup E_{3}$, by Proposition~\ref{SizeB3}. Since we are considering subsets of $[1,\nu_{1}]$, then it is not possible to have $\# \{ i\colon 1\leq
i\leq\nu_{1},\,\beta_{i}=c \}> n$. Moreover, the interval $ [\nu_{1}+1,N]$ is excluded here, so values in $[n+1,nd-1]$ are excluded. Thus,
\begin{gather*}
\# \{ i\colon 1\leq i\leq\nu_{1},\, \beta_{i}<b \}= \sum_{c=0}^{b-1} \#\{i\colon \beta_{i}=c,\, 1\leq i \leq\nu_{1}\}\\
\hphantom{\# \{ i\colon 1\leq i\leq\nu_{1},\, \beta_{i}<b \}}{} \leq(n-1)(b-1)+n = b(n-1)+1.
\end{gather*}
Putting all together,
\begin{gather*}
\nu_{1}+1-nb=r_{\beta} (\nu_{1}+1 )\geq\nu_{1}+1-\# \{i\colon 1\leq i\leq\nu_{1},\,\beta_{i}<b \}\geq\nu_{1}-b(n-1).
\end{gather*}
That is $nb-1\leq b(n-1)$, and so $b\leq1$. If $b=0$, then $r_{\beta} (\nu_{1}+1 )=\nu_{1}+1$ which implies $\# \{ i\colon i>\nu_{1},\,\beta_{i}>0 \}=0$ and $\beta_{i}=0$ for $i>\nu_{1}$. Otherwise, $b=1$ and $r_{\beta} (\nu_{1}+1 )=\nu_{1}+1-n$. According to the notation
described in Section~\ref{SetB}, let $B_{0}= \{ i\colon 1\leq i\leq\nu_{1},\,\beta_{i}=0\}$. By the results about the size of $B$ presented in Section~\ref{SetB}, $\#B_{0}\leq n$ and $\# \{i\colon i>\nu_{1},\, \beta_{i}>1 \}=0$. We conclude then that $\#B_{0}=n$ and $B_{0}=E_{2}\cup E_{3}$. Furthermore,
$\nu_{1}+1\leq i\leq\ell (\beta )$ implies $\beta_{i}=1$ because the values $\beta_{i}>1$ and $\beta_{i}=0$ are excluded.
\end{proof}

In fact, we also know the length of $\beta$ for the last case in Lemma~\ref{Betai} as we show in the following proposition.

\begin{Proposition}\label{LenBeta1}If $\beta_{\nu_{1}+1}=1$, then $\ell(\beta)=N+j$.
\end{Proposition}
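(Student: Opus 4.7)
The plan is to combine Lemma~\ref{Betai} with the maximum-principle size bounds on the sets $B_c$ from Section~\ref{SetB}, and then solve $|\beta|=|\mu|$ for $L:=\ell(\beta)$. By Lemma~\ref{Betai}, $\beta_i=1$ for $\nu_1<i\leq L$, $\beta_i=0$ for $i>L$, and $\{i\leq\nu_1\colon \beta_i=0\}=E_2\cup E_3$, so all zeros of $\beta$ below $\nu_1$ are located on $E_2\cup E_3$ and have total count~$n$.

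First I would show $L\geq N$. If instead $L<N$, then the non-empty block $(L,N]\subset I_1$ consists of zeros of $\beta$, so $B_0$ meets both $E_2\cup E_3\subset I_j\cup I_{j+1}$ and $I_1$ (with $1\neq j,j+1$). Proposition~\ref{SizeB2} then forces $|B_0|\leq n-2$, contradicting $|B_0|=n+(N-L)\geq n+1$.

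Next I would establish $B_1=I_1$, equivalently $\beta_i\neq 1$ throughout $[1,\nu_1]$. Because $L\geq N$, all of $I_1$ sits inside $B_1$, so $|B_1|\geq nd-1$. If $B_1$ additionally met some $I_u$ with $u\neq 1,j,j+1$, or some position in $E_1\cup E_4$, then Corollary~\ref{SizeB1} or Corollary~\ref{maxp2} would give $|B_1|\leq n-2<nd-1$, a contradiction. Hence every $\beta_i$ on $[1,\nu_1]\setminus(E_2\cup E_3)$ is at least~$2$, and in particular $\beta_i\geq 2$ on $E_1\cup E_4$. Iterating this maximum-principle argument for each $B_c$ with $c\geq 2$ and invoking the dominance $\mu\trianglerighteq\beta$, each value $c$ actually attained by $\beta$ is confined to a single block ($I_u$ with $u\neq j,j+1$, or $E_1\cup E_4$); matching the resulting block sizes against the multiset of values of $\mu$ pins down $\beta_i=\mu_i+1$ on $I_u$ for $1<u<j$, $\beta_i=\mu_i$ on $E_1$ and on $I_u$ for $u>j+1$, and $\beta_i=d+j-1$ on~$E_4$.

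Finally $|\beta|=|\mu|$ gives $L-\nu_1=|\mu|-\sum_{i\leq \nu_1}\beta_i$. Substituting the block description and simplifying (using $|E_1|=n-1-k$, $|E_4|=k-1$, $|E_2|=n-k$, $|E_3|=k$) produces $\sum_{i\leq\nu_1}\beta_i=|\mu|-(nd+j-1)$, so $L-\nu_1=nd+j-1$ and therefore $L=\nu_1+nd+j-1=N+j$. The main technical obstacle is the block-by-block determination of $\beta$ on $[1,\nu_1]\setminus(E_2\cup E_3)$: one needs to bound each $B_c$ individually using the critical-pair machinery of Section~\ref{Pairsjlarger1} and rule out any non-extremal arrangement; once this is done, the final identity is a purely mechanical size computation.
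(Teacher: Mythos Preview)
Your strategy would eventually work, but it is a long detour compared with the paper's argument, and the hardest step you flag is exactly the content of the \emph{next} theorem in the paper.

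The paper's proof is four lines and uses nothing beyond Lemma~\ref{Betai}. From that lemma, $i_{0}:=\min\{i:\beta_i=0\}=\min E_{2}=\nu_{j}-k+1$, and since the only zeros of $\beta$ are the $n$ entries on $E_{2}\cup E_{3}$, one has $r_{\beta}(i_{0})=\ell(\beta)+1-n$. Plugging this into the single rank equation $R_{\beta}(i_{0})=R_{\mu}(i_{0})=i_{0}+k+n(d+j-2)$ and substituting $\nu_{j}=N-(nd-1)-(j-1)(n-1)$ gives $\ell(\beta)=N+j$ directly. No information about $\beta$ on $[1,\nu_{1}]\setminus(E_{2}\cup E_{3})$ is needed at all.

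By contrast, your route first pins down $\beta$ completely on $[1,\nu_{1}]$ and then reads off $\ell(\beta)$ from $|\beta|=|\mu|$. That complete determination is precisely Theorem~\ref{FinalThm1}, which in the paper is proved \emph{after} Proposition~\ref{LenBeta1} and in fact quotes it in its first line. So what you are proposing is to reverse the logical order: prove Theorem~\ref{FinalThm1} without Proposition~\ref{LenBeta1} as input, and then deduce the latter. This can be done (your first two steps, $L\geq N$ via Proposition~\ref{SizeB2} and $B_{1}\cap[1,\nu_1]=\varnothing$ via Corollaries~\ref{maxp2} and~\ref{SizeB1}, supply exactly the substitute for Proposition~\ref{LenBeta1} that the proof of Theorem~\ref{FinalThm1} would need), but your ``iterating the maximum principle'' sketch glosses over the real work: the bounds $\#B_{c}\leq n-1$ do \emph{not} by themselves confine each $B_{c}$ to a single block $I_{u}$ or to $E_{1}\cup E_{4}$. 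The paper's proof of Theorem~\ref{FinalThm1} needs, in addition, the two linear constraints $\sum m_{i}=1$ and $\sum m_{i}(d+i-2)=d+j-1$ on the defects $m_{i}$ and then a rank--equation computation at $\nu_{g(i)-1}$ to identify the permutation $g$. You would have to reproduce all of that. A minor slip: $\#E_{1}=\tau_{j+1}-k$, which equals $n-1-k$ only when $j<K$; this does not affect your final count because the contribution of $E_{1}$ cancels, but it is worth correcting.

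In short: correct in principle, but it trades a one--line rank computation for the entire proof of Theorem~\ref{FinalThm1}.
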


\begin{proof}By Lemma~\ref{Betai}, $i_{0}=\min \{ i\colon \beta_{i}=0 \}=\min E_{2}=\nu_{j}-k+1$. Then $r_{\beta}(i_{0})=1+\# \{ i\colon 1\leq i\leq \ell(\beta),\,\beta_{i}>0 \}=\ell(\beta)+1-n$ and the rank equation $R_{\beta}(i_{0})=R_{\mu}(i_{0})$ implies that $n(d+j-2)=r_{\beta}(i_{0})-r_{\mu}(i_{0})=\ell(\beta)+1-n-(i_{0}+k)=\ell(\beta)+1-n-\nu_{j}-1$.
Substitute $\nu_{j}=N-(nd-1)-(j-1)(n-1)$ in the last equation and obtain $\ell(\beta)=N+j$.
\end{proof}

We are ready to prove how is $\beta$ in this last case.

\begin{Theorem}\label{FinalThm1} If $R_{\beta}=R_{\mu}$, $\mu\vartriangleright\beta$, $j>1$
and $\beta_{\nu_{1}+1}=1$, then
\begin{gather*}
\beta_{i} = \begin{cases}
\mu_{i} & \text{for } i<\min E_{2},\\
0 & \text{for } \min E_{2} \leq i \leq\max E_{3},\\
\mu_{i}+1 & \text{for } \max E_{3} \leq i \leq N+j.
\end{cases}
\end{gather*}
\end{Theorem}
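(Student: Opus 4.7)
The plan is to leverage the preliminary results already established to pin down every remaining entry of $\beta$. By Lemma~\ref{Betai}, the zeros of $\beta$ in $[1,\nu_1]$ are exactly $E_2\cup E_3$, and $\beta_i=1$ for $\nu_1<i\leq\ell(\beta)$; by Proposition~\ref{LenBeta1}, $\ell(\beta)=N+j$. It remains to determine $\beta_i$ for $i\in[1,\nu_1]\setminus(E_2\cup E_3)$. The tools are the rank equation $R_\beta(i)=R_\mu(i)$, the dominance $\mu\vartriangleright\beta$, and the size bounds on the level sets $B_c$ from Section~\ref{SetB}.

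Phase~1 handles the upper block $i<\min E_2$ by ascending induction on $i$. The base case $i=1$ (lying in $I_{K+1}$ when $j<K$, or in $E_1$ when $j=K$) has $\mu_1=d+K-1$ and $r_\mu(1)=1$, giving $r_\beta(1)+n\beta_1=1+n(d+K-1)$. A value $\beta_1\leq d+K-2$ would force at least $n$ indices with $\beta_{i'}\geq d+K-1$, but dominance caps the number of entries equal to $d+K-1$ at $\nu_K$, and Definition~\ref{DefTauNu} gives $\nu_K\leq n-1$, a contradiction. So $\beta_1=\mu_1$. The inductive step mirrors this: at each $i\in I_u$ with $u>j+1$, or $i\in E_1$, the inputs $r_\mu(i)=i$ and $\mu_i=d+u-2$ (or $d+j-1$ on $E_1$), together with the size bounds of Corollary~\ref{maxp2} and the inductive hypothesis, force $\beta_i=\mu_i$.

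Phase~2 handles the lower block $\max E_3<i\leq N+j$ by ascending induction on $i$, starting from $\min E_4$ (or $\min I_{j-1}$ if $E_4=\varnothing$) and terminating at $i=N+j$, where $\beta_{N+j}=1=\mu_{N+j}+1$ is already known. The core observation is that the $n$ zeros of $\beta$ at $E_2\cup E_3$, all indexed below $i$, produce a $+n$ offset in $r_\beta(i)$ relative to the analogous rank computed without those zeros; this offset is exactly cancelled by the rank equation's requirement $\beta_i=\mu_i+1$. Dominance rules out $\beta_i>\mu_i+1$, while the rank equation rules out $\beta_i<\mu_i+1$ by the same counting argument as in Phase~1.

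The main obstacle is the boundary bookkeeping at $i=\min E_2$ and $i=\max E_3$, where $r_\mu$ has nontrivial jumps ($r_\mu(i)=i+k$ on $E_2$, $r_\mu(i)=i-n+k$ on $E_3$) reflecting the row-$j$/row-$(j+1)$ interleaving of $\Theta_{j,k}$. At each boundary one must verify that the corresponding jumps in $r_\beta$---driven partly by the zeros at $E_2\cup E_3$ and partly by the transition from $\beta_i=\mu_i$ above to $\beta_i=\mu_i+1$ below---agree exactly with the rank equation. These identities, though essentially computational, are the technical heart; once verified, the two induction arguments meet cleanly at the $E_2\cup E_3$ gap and yield the claimed piecewise formula.
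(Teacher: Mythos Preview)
Your inductive route differs from the paper's and carries a real gap in Phase~2. The sentence ``Dominance rules out $\beta_i>\mu_i+1$'' is not justified: $\mu\vartriangleright\beta$ only gives $\mu^+\succeq\beta^+$, which constrains partial sums of $\beta^+$, not individual entries. Your base case is fine because for the maximum value $d+K-1$ the partial-sum bound does cap $\#\{i:\beta_i=d+K-1\}$ at $\nu_K$; but for $i\in I_u$ with $u<j$ there is no dominance argument preventing $\beta_i=d+u$ or larger. What actually rules this out is the level-set machinery of Section~\ref{SetB} (Corollary~\ref{maxp2}, Corollary~\ref{SizeB1}, Propositions~\ref{SizeB2}--\ref{SizeB3}): once Phase~1 and the $E_4$ portion of Phase~2 have pinned down $B_c$ for every $c\ge d+j-1$, those values are exhausted and cannot recur at later indices, whence $\beta_i\le d+j-2$ there. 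A parallel repair is needed in Phase~1 before your counting bound $\#\{k>i:\beta_k>\beta_i\}\le p(n-1)<np$ is valid: you must first exclude $\beta_i\in\{0,1\}$ on $J$, since $B_1\supset I_1$ has size $dn-1\ge n-1$ and would otherwise spoil the estimate.

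The paper avoids the induction altogether. It first shows $\beta_i\ge d+1$ on $J=[1,\min E_2-1]\cup[\max E_3+1,\nu_1]$ via a direct rank inequality, then sets $C_i=\{s:\beta_s=d+i-2\}$ and $m_i=(n-1)-\#C_i$ for $3\le i\le K$ (with $m_{K+1}=\nu_K-\#C_{K+1}$). From $\#J=\nu_1-n$ and $|\beta|=|\mu|$ one obtains the two linear relations $\sum_{i} m_i=1$ and $\sum_{i} m_i(d+i-2)=d+j-1$; since the maximum principle forces $m_i\ge0$, the unique solution is $m_{j+1}=1$ and $m_i=0$ otherwise. This fixes all level-set sizes at once, after which a single rank computation per level identifies $C_i$ with the correct interval ($I_i$ for $i>j+1$, $I_{i-1}$ for $i\le j$, and $E_1\cup E_4$ for $i=j+1$). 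Your approach can be salvaged by replacing the dominance appeal with the level-set argument, but it then rebuilds this same information interval by interval rather than extracting it from two global sums.
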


Notice that the description is given in terms of $E_{2}$ and $E_{3}$ to avoid awkwardness with $E_{1}=\varnothing$ or $E_{4}=\varnothing$, when $k=\tau_{j+1}$ or $1$, respectively.

\begin{proof}
By Lemma~\ref{Betai} and Proposition~\ref{LenBeta1}, $\beta_{i}=1$ for $\nu_{1}+1\leq i\leq N+j$ and, by hypothesis, $\beta_{i}=0$ for $i\leq N+j$ if and only if $i\in E_{2}\cup E_{3}$. Thus, we consider the values of $\beta$ on $J= [1,\min E_{2}-1 ]\cup [\max E_{3}+1,\nu_{1} ]$.
First, we show that $d+1\leq\beta_{i}\leq d+K-1$, for $i\in J$. Suppose $i\in I_{u}$, with $I_{j}=E_{4}$ and $I_{j+1}=E_{1}$. Then, $R_{\mu}(i)=R_{\beta }(i)=i+n(d+u-2)$. Moreover, $i\in J$ implies that $r_{\beta}(i)\leq\nu_{1}-n$ and then, $i+n(d+u-2)\leq\nu_{1}-n+n\beta_{i}$. This last inequality translates into:
\begin{gather*}
n\beta_{i} \geq i +n(d+u-2)+n-\nu_{1} = (i-\nu_{u}) + \nu_{u}-\nu_{1} +n(d+u-1) \\
\hphantom{n\beta_{i}}{}=(i-\nu_{u}) + n(d+u-1) - (u-1)(n-1) = (i-\nu_{u})+nd+u-1.
\end{gather*}
Since $i-\nu_{u}\geq1$, we have that $\beta_{i}\geq d+1$.

Let $C_{i}= \{ s\colon \beta_{s}=d+i-2 \}$, for $i\leq K+1$, and $m_{i}=n-1-\#C_{i}$ for $3\leq i\leq K$ and $m_{K+1}=\nu_{K}-\#C_{K+1}$. By the maximum principle and the fact that $\mu\trianglerighteq\beta$, it follows that $m_{i}\geq0$, for all $i$. Note that the set $E_{2}\cup E_{3}$ is excluded here. There are two equations satisfied by the $m_{i}$'s:
\begin{gather*}
\sum_{i=3}^{K+1}\#C_{i} = \nu_{1}-n \qquad \text{and} \qquad
\sum_{i=3}^{K+1}\#C_{i}(d+i-2) = |\beta|-(j+nd-1).
\end{gather*}
Simplifying the first equation, we get that $\sum\limits_{i=3}^{K+1}m_{i}=1$, and simplifying the second equation, which requires more computation, we get that $\sum\limits_{i=3}^{k+1} m_{i}(d+i-2) =d+j-1$.

The unique solution is $m_{j+1}=1$ and $m_{i}=0$, for $i\neq j+1$, implying that $\#C_{i}=n-1$. Thus, $C_{i}=I_{g(i)}$, for some $g(i)\neq j,j+1$, and $C_{j+1}=E_{1}\cup E_{4}$, since $\#C_{j+1}=n-2$. The obvious modifications are made here if $\nu_{K}<n-1$ or $j=K$. If $i>j+1$, then $r_{\beta} (
\nu_{g(i)-1} )=\nu_{i-1}$ and $r_{\beta} (\nu_{g(i)-1} ) -r_{\mu} (\nu_{g(i)-1} )=n[(d+i-2)-d+g(i)-2)]$. Therefore, $n(i-g(i))=(\nu_{i-1}-\nu_{g(i)-1})=(n-1)(g(i)-i)$ and $i=g(i)$, thus $\beta_{u}=\mu_{u}$, for $u\in I_{g(i)}$. If $i<j$, then $r_{\beta}(\nu_{g(i)-1}) =\nu_{i-1}-n$ and $r_{\beta}(\nu_{g(i)-1})-r_{\mu}(\nu_{g(i)-1})=n[(d+i-2)-1-(d+g(i)-2)]$. Thus, $g(i)=i-1$ and $\beta_{u}=d+g(i)-1=\mu_{u}+1$, for $u\in I_{g(i)}$.
\end{proof}

It remains to show the other case described in Lemma~\ref{Betai}. The next results show that if $\beta_{\nu_{1}+1}=0$, then $\beta=\mu$. Let us start with a lemma.

\begin{Lemma}\label{meetE2E3} Let $c$ and $c^{\prime}$ be two different indexing parameters
such that their corresponding sets $B_{c}= \{ i\colon \beta_{i}=c \}$ and $B_{c^{\prime}}= \{ i\colon \beta_{i}=c^{\prime} \}$ satisfy that $B_{c}\cup B_{c^{\prime}}=I_{j}\cup I_{j+1}$, with $B_{c^{\prime}}\cap E_{2}\neq\varnothing$. Then, $B_{c}=E_{1}\cup E_{3}$ and $B_{c^{\prime}}=E_{2}\cup E_{4}$, or $B_{c}=E_{1}\cup E_{4}$ and $B_{c^{\prime}}=E_{2}\cup E_{3}$. Moreover, in the latter case, $\#B_{c^{\prime}}=n$.
\end{Lemma}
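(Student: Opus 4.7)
The plan is to combine the size bounds from Proposition~\ref{SizeB3} for each of $B_c$ and $B_{c'}$ with the additivity $\#B_c + \#B_{c'} = \tau_j + \tau_{j+1} \le 2(n-1)$, and then to use the endpoint equations collected in Section~\ref{Pairsjlarger1} to rule out every configuration other than the two listed.

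First, since $B_c \cup B_{c'} = I_j \cup I_{j+1}$, neither set meets any $I_u$ with $u\neq j,j+1$, so Proposition~\ref{SizeB3} applies to each and yields $\#B_c, \#B_{c'} \le n$, with equality forcing that set to equal $E_2\cup E_3$. Combined with additivity, this restricts $\#B_{c'}\in\{n-1,n\}$: the possibility $\#B_{c'} = n-2$ would entail $\#B_c = n$, hence $B_c = E_2\cup E_3$, contradicting $B_{c'}\cap E_2 \neq \varnothing$.

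If $\#B_{c'} = n$, the equality case of Proposition~\ref{SizeB3} gives $B_{c'} = E_2\cup E_3$ and so $B_c = E_1\cup E_4$, which is the latter case of the lemma together with the asserted cardinality. If instead $\#B_{c'} = n-1$, I proceed by case-by-case elimination over the configurations $\mathbf{b}_{c'} = (b_1,1,b_3,b_4)$. The target $(0101)$ (or $(0100)$ when $k=1$, so $E_4=\varnothing$) is realised by $B_{c'}=E_2\cup E_4$, $B_c=E_1\cup E_3$, compatible with the $(\ast101)$ and $(101\ast)$ constraints respectively. Every other choice is ruled out by combining the endpoint equations of Section~\ref{Pairsjlarger1} (such as $a_j' = b_{j+1}+n-k+1$ for $(11\ast\ast)$, $a_j = b_{j+1}'+k+1$ for $(\ast\ast11)$, and $a_{j+1}' = \nu_{j-1}-2k+2$, $b_j' = \nu_{j-1}-2k+1$ for $(\ast11\ast)$) with the analogous constraints imposed on the complement $B_c$ in $I_j\cup I_{j+1}$. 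The configurations $(1111), (1011), (1101), (1100), (0011), (1001)$ all force $\#B \le n-2$ and so are incompatible with $\#B_{c'}=n-1$. The remaining candidates $(1110), (0111), (0110)$ are ruled out by examining the complement: for $(0110)$, any split with $b_{j+1}'-a_j'=n-2$ forces $\mathbf{b}_c\in\{(1,0,1,1),(1,1,0,1)\}$, and the $(\ast\ast 11)$ or $(11\ast\ast)$ constraints for $B_c$ then give $k=0$ or $n=k$; a parallel argument eliminates $(1110)$ and $(0111)$.

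The main obstacle is the bookkeeping of this case analysis: for each candidate configuration, the endpoint equations must be verified both for $B_{c'}$ and for the complement $B_c$, and the sizes tracked to ensure compatibility with $\#B_{c'}=n-1$. Once this enumeration is complete, the only two surviving configurations are $B_{c'}=E_2\cup E_4$, $B_c=E_1\cup E_3$ and $B_{c'}=E_2\cup E_3$, $B_c=E_1\cup E_4$, which is exactly the statement of the lemma.
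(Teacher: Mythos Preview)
Your approach is essentially the same as the paper's: both arguments combine the size bounds from Proposition~\ref{SizeB3} with the endpoint constraints collected just before it in Section~\ref{Pairsjlarger1}, and then eliminate every configuration except the two stated. The only real difference is cosmetic: the paper enumerates the possible states $\mathbf{b}$ of $B_c$ (arguing that $\#B_c\le n-2$ already forces $B_{c'}=E_2\cup E_3$, and then handling the two states $(1010)$, $(1110)$ where $\#B_c=n-1$ is possible), whereas you enumerate the states $\mathbf{b}_{c'}$ of $B_{c'}$. The paper's version is shorter because it uses the complement trick---if both $B_c$ and $B_{c'}$ have the $(\ast11\ast)$ pattern then both must contain $\nu_{j-1}-2k+1$ and $\nu_{j-1}-2k+2$, a contradiction---rather than deriving $k=0$ or $k=n$ from the endpoint equations as you do.

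One bookkeeping slip: the list ``$(1111),(1011),(1101),(1100),(0011),(1001)$'' that you claim forces $\#B\le n-2$ includes three states with second entry $0$, which cannot arise for $\mathbf{b}_{c'}$ under your running hypothesis $B_{c'}\cap E_2\neq\varnothing$; the intended list should contain only states of type $(\ast,1,\ast,\ast)$, namely $(1111),(1101),(1100)$ and, when $k\ge 2$, $(0100)$.
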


\begin{proof}If $\#B_{c}\leq n-2$, then $\#B_{c^{\prime}}\geq n$, which means that $B_{c^{\prime}}=E_{2}\cup E_{3}$.

The cases $\mathbf{b}=(1010)$ and $(1110)$ allow $\#B=n-1$ and imply that $E_{4}\subset B_{c^{\prime}}$ and that $\mathbf{b}^{\prime}=(0111)$ and $(0101)$, respectively.

The case $(1000)$ is excluded because $\#E_{1}=\tau_{j+1}-k\leq\tau_{j+1}-1$, as well as $\mathbf{b}^{\prime}=(0001)$ because $\#E_{4}=k-1\leq n-2$.
Finally, $\mathbf{b}=(1110)$ and $\mathbf{b}^{\prime}=(0111)$ can not occur because the state $(\ast11\ast)$ implies $t_{j+1}^{\prime}=s_{j}^{\prime}$ and $\beta_{\nu_{j-1}-2k+1}=c=\beta_{\nu_{j-1}-2k+2}=c^{\prime}$.
\end{proof}

We are ready to prove that $\beta=\mu$, under the conditions established for this case.

\begin{Theorem}\label{FinalThm2} If $R_{\beta}=R_{\mu}$, $\mu\trianglerighteq\beta$, $j>1$ and $\ell(\beta)\leq\nu_{1}$, then $\beta=\mu$.
\end{Theorem}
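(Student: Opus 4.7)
The strategy is a two-step adaptation of the proof of Proposition~\ref{beta1}, with $\mu$ playing the role of $\lambda$: first establish that $\beta$ is a permutation of $\lambda$ (i.e.\ $\beta^+ = \lambda$), then pin down the permutation to equal $\mu$ by iterating the pointwise rank equations $R_\beta(i) = R_\mu(i)$.

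For the first step, Lemma~\ref{Betai} and the hypothesis $\ell(\beta) \leq \nu_1$ give $\beta_i = 0$ for $i > \nu_1$, i.e.\ $B_0 \supseteq I_1$. I would upgrade this to $B_0 = I_1$ as follows: if some $\beta_{i_0} = 0$ with $i_0 \leq \nu_1$ existed, then the rank equation $R_\beta(i_0) = R_\mu(i_0)$ together with the size bounds of Corollary~\ref{SizeB1} and Propositions~\ref{SizeB2}, \ref{SizeB3} applied to $B_c$ for $c \geq 1$ would be inconsistent --- except in the saturated configuration $B_0 \cap [1,\nu_1] = E_2 \cup E_3$ singled out by Proposition~\ref{SizeB3}. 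By Lemma~\ref{meetE2E3}, this saturated configuration forces $\beta_{\nu_1+1} = 1$, contradicting $\ell(\beta) \leq \nu_1$. Hence $B_0 = I_1$, and then the size bounds $\#B_c \leq n-1$ for $d \leq c \leq d+K-2$ and $\#B_{d+K-1} \leq \nu_K$, combined with the total count $\sum_{c \geq d} \#B_c = \nu_1 = \sum_{c \geq d} \#\lambda^{-1}(c)$, force every inequality to be an equality, so $\#B_c = \#\lambda^{-1}(c)$ for all $c$ and $\beta^+ = \lambda$. This step is the main obstacle: the delicate point is that the exceptional $n$-element saturation of Proposition~\ref{SizeB3} is precisely the configuration already characterised by Theorem~\ref{FinalThm1}, so its exclusion relies on the distinction between the two cases of Lemma~\ref{Betai}.

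For the second step, once $\beta^+ = \lambda$ the rest is a mechanical iteration. Assuming first that $j < K$, the rank equation at $i = \nu_K$ reads $r_\beta(\nu_K) + n\beta_{\nu_K} = \nu_K + n(d+K-1)$, and combined with $\beta_{\nu_K} \leq d+K-1$ and $\#B_{d+K-1} = \nu_K$ this forces $\beta_i = d+K-1$ for all $i \in I_{K+1}$; iterating the rank equations at $\nu_{K-1},\dots,\nu_{j+1}$ pins $\beta_i = \lambda_i$ on $\bigcup_{u>j+1} I_u$, and a symmetric bottom-up iteration using $\beta_i = 0$ on $I_1$ and the rank equations at $\nu_1,\dots,\nu_{j-1}$ pins $\beta_i = \lambda_i$ on $\bigcup_{2 \leq u < j} I_u$. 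On the remaining block $I_j \cup I_{j+1}$, $\beta$ is now a permutation of $\mu|_{I_j \cup I_{j+1}}$; evaluating the rank equations at $i = \min E_2$ (where $R_\mu = (\nu_j-k+1) + n(d+j-2)$ because $r_\mu(\min E_2) = \nu_j+1$) and at $i = \max E_3$ distinguishes the two available values and forces $\beta$ to carry value $d+j-2$ exactly on $E_2 \cup E_4$ and value $d+j-1$ exactly on $E_1 \cup E_3$, i.e.\ $\beta = \mu$. The edge case $j = K$ is treated in parallel by merging the topmost rank-equation step into the rearranged-block analysis, with the rank equation at $\nu_K$ serving directly to pin the values on $E_1 \cup E_3$.
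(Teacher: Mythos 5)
Your overall two-step strategy (first force $\beta^{+}=\lambda$, then pin down the permutation) is the same as the paper's, but your first step has a genuine gap. Your counting argument requires the bound $\#B_{c}\leq n-1$ for every value $c\in[d,d+K-2]$, and you justify this by claiming that the exceptional $n$-element saturation $B_{c}=E_{2}\cup E_{3}$ of Proposition~\ref{SizeB3} is excluded by the case distinction of Lemma~\ref{Betai}. That lemma, however, only concerns the value $\beta_{\nu_{1}+1}$ and identifies the \emph{zero} level set in its second case; it says nothing about whether a level set at a \emph{positive} value $c\geq d$ can equal $E_{2}\cup E_{3}$ while $\beta_{i}=0$ for all $i>\nu_{1}$ (note $E_{2}\cup E_{3}\subset[1,\nu_{1}]$ when $j>1$, so the hypothesis $\ell(\beta)\leq\nu_{1}$ does not forbid it either). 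If some $\#B_{c_{0}}=n$ with $c_{0}\geq d$, the identity $\sum_{c\geq d}\#B_{c}=\nu_{1}$ no longer forces all bounds to be tight: one level exceeds its bound by $1$ while another is deficient by $1$, and the count equation alone is satisfied. The paper closes exactly this loophole with a second, weighted equation: setting $m_{i}=\#C_{i}-(\nu_{i-1}-\nu_{i})$, both $\sum_{i}m_{i}=0$ and $\sum_{i}m_{i}(d+i-2)=0$ hold, and a single $m_{i}=+1$ would force a single $m_{u}=-1$ with $i=u$, a contradiction. You need this second constraint (or an equivalent) to complete step~1.

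Two further soft spots. First, your ``symmetric bottom-up iteration'' is not actually symmetric to the top-down one: the step at $\nu_{K}$ works because $\nu_{K}\leq n-1$, whereas the analogous inequality at $\nu_{1}$ does not by itself exclude $\beta_{\nu_{1}}>d$; you must additionally invoke $\ell(\beta)\leq\nu_{1}$ to place the entire level set $B_{\beta_{\nu_{1}}}$ inside $[1,\nu_{1}]$. The paper avoids the iteration altogether by showing each level set is a whole interval $I_{g(i)}$ and solving $n(i-g(i))=(n-1)(g(i)-i)$ to get $g(i)=i$. Second, on the block $I_{j}\cup I_{j+1}$ the two rank equations at $\min E_{2}$ and $\max E_{3}$ only determine $\beta$ at those two positions; to conclude $C_{j}=E_{2}\cup E_{4}$ and $C_{j+1}=E_{1}\cup E_{3}$ you still need the configuration analysis packaged in Lemma~\ref{meetE2E3}, together with $\#(E_{2}\cup E_{3})=n\neq n-1$ to discard the second alternative. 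Finally, the opening of your step~1 can be shortened: $\mu\trianglerighteq\beta$ and $\ell(\beta)\leq\nu_{1}$ already give $d\leq\beta_{i}\leq d+K-1$ for $i\leq\nu_{1}$ by comparing partial sums with those of $\lambda$, with no need for rank equations or Lemma~\ref{Betai}.
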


\begin{proof}Since $\mu\trianglerighteq\beta$, $d\leq\beta_{i}\leq d+K-1$ for $1\leq i\leq\nu_{1}$.

Let $C_{i}=\{ s\colon\beta_{s}=d+i-2\}$ and $m_{i}=\#C_{i}- (\nu_{i-1}-\nu_{i} )$, for $2\leq i\leq K+1$. Then $\sum\limits_{i=2}^{K+1}m_{i}=0$ and $\sum\limits_{i=2}^{K+1}m_{i}(d+i-2)=0$. We also have that $m_{K+1}\leq0$ because $\mu\trianglerighteq\beta$. For $2\leq i\leq K$, by the study about the size of $B$ presented in Section~\ref{SetB}, we know that $\nu_{i-1}-\nu_{i}=n-1$, and this implies that $m_{i}\leq1$ and that at most one value of $i$ allows $m_{i}=1$. Now, this is impossible because the sums would imply there exists $u$ such that $m_{u}=-1$ and $(d+i-2)-(d+u-2)=0$.

The previous argument shows that the level sets of $\beta$ are permutations of the level sets of $\mu$. That is, $C_{i}=I_{g(i)}$ for some $g(i)\neq j, j+1$. Since $\beta$ is a permutation of $\mu$, $r_{\beta}(\nu_{g(i)-1})=\nu_{i-1}$ and the rank equation gives $r_{\beta}(\nu_{g(i)-1})-r_{\mu}(\nu_{g(i)-1})=n[(d+i-2)-(d+g(i)-2)]$ and $n(i-g(i))=(\nu_{i-1}-\nu_{g(i)-1})=(n-1)(g(i)-i)$. We conclude then that $i=g(i)$.

As a consequence, $\#C_{j}=n-1=\#C_{j+1}$, or $\#C_{j+1}=\nu_{K}$ if $j=K$, and $C_{j}\cup C_{j+1}=I_{j}\cup I_{j+1}$. If $i\in C_{j}\cap(E_{2}\cup E_{4})$ or $i\in C_{j+1}\cap(E_{1}\cup E_{3})$, then $r_{\beta}(i)=r_{\mu}(i)$. Therefore, $i=\min B_{j}=\min E_{2}$ or else $r_{\beta}(i)-r_{\mu}(i)=\nu_{j}+1-i=n$ and $i=\nu_{j+1}\notin E_{1}$ when $i\in E_{1}$, or
$r_{\beta}(i)-r_{\mu}(i) =\nu_{j}+1-(i-n+k)=n$, $i\in E_{3}$ and $i=\nu_{j}+1-k\in E_{2}$. Thus $C_{j}\cap E_{2}\neq\varnothing$ and we apply Lemma~\ref{meetE2E3} taking $B_{c^{\prime}}=C_{j}$ and $B_{c}=C_{j+1}$ to conclude that $C_{j}=E_{2}\cup E_{4}$ and $C_{j+1}=E_{1}\cup E_{3}$. The case $C_{j}=E_{2}\cup E_{3}$ is impossible since $\#(E_{2}\cup E_{3})=n$. Thus $\beta=\mu$.
\end{proof}

We finish the case $j>1$ with the following theorem.

\begin{Theorem}For $j>1$, if $(\mu,\beta)$ is an $(1,n)$-critical pair, then $\ell (\beta)=N+j$.
\end{Theorem}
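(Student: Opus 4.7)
The plan is to show that the dichotomy established in Lemma~\ref{Betai} collapses to a single case once we impose that $(\mu,\beta)$ is a genuine critical pair (so $\mu\vartriangleright\beta$ strictly). Since Proposition~\ref{LenBeta1} already computes $\ell(\beta)=N+j$ in the case $\beta_{\nu_{1}+1}=1$, the task reduces to ruling out the alternative $\beta_{\nu_{1}+1}=0$.

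First I would invoke Lemma~\ref{Betai}: since $R_{\beta}=R_{\mu}$ automatically holds for a critical pair, either $\beta_{\nu_{1}+1}=1$ (and we are done by Proposition~\ref{LenBeta1}) or $\beta_{\nu_{1}+1}=0$. In the latter case, Lemma~\ref{Lem:betaizero} applied at $i=\nu_{1}+1$ (which lies beyond $\ell(\mu)=\nu_{1}$, since $j>1$ forces $\mu$ to vanish past~$\nu_1$) forces $\beta_{i}=0$ for all $i\geq\nu_{1}+1$. Consequently $\ell(\beta)\leq\nu_{1}$, and Theorem~\ref{FinalThm2} applies, yielding $\beta=\mu$.

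This contradicts the definition of a critical pair, which requires the strict domination $\mu\vartriangleright\beta$ with $\mu\neq\beta$ (see Definition~\ref{Def:criticalpairs}). Hence the alternative $\beta_{\nu_{1}+1}=0$ is impossible, and we must have $\beta_{\nu_{1}+1}=1$, so Proposition~\ref{LenBeta1} delivers $\ell(\beta)=N+j$.

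There is essentially no obstacle: all the heavy lifting has been done in Lemma~\ref{Betai}, Proposition~\ref{LenBeta1}, Theorem~\ref{FinalThm1}, and Theorem~\ref{FinalThm2}. The only subtlety is noting that Lemma~\ref{Lem:betaizero} is indeed applicable because $j>1$ guarantees $\ell(\mu)=\nu_{1}$ and hence $\nu_{1}+1>\ell(\mu)=\ell(\alpha)$ in the statement of that lemma, a point worth mentioning explicitly in the write-up to make the logic transparent.
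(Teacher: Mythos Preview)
Your proposal is correct and matches the paper's approach: the paper states this theorem without proof, treating it as an immediate consequence of the dichotomy in Lemma~\ref{Betai}, Proposition~\ref{LenBeta1}, and Theorem~\ref{FinalThm2}, which is exactly the deduction you have written out. One minor redundancy: your appeal to Lemma~\ref{Lem:betaizero} is unnecessary, since the first alternative in Lemma~\ref{Betai} already asserts $\beta_i=0$ for all $i\geq\nu_1+1$ directly (not merely $\beta_{\nu_1+1}=0$), so you get $\ell(\beta)\leq\nu_1$ immediately.
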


\subsection[The pairs $(\alpha(\Theta_{j,k}),\beta)$: Case $j=1$]{The pairs $\boldsymbol{(\alpha(\Theta_{j,k}),\beta)}$: Case $\boldsymbol{j=1}$}\label{Pairsj=1}

In this case, $\mu$ has length $\ell(\mu)=N-k+1$, with $k\leq n-1$. It turns out that $\beta$ in the critical pair $(\mu,\beta)$ is a permutation of $\lambda= \mu^{+}$, and differs from~$\mu$ only in the arrangement of the values~$d$ and~$0$. Moreover, $\ell(\beta)=N+1$.

The relevant subdivision of $I_{1}\cup I_{2}$ and its properties are:
\begin{itemize}\itemsep=0pt
\item $E_{1}= [\nu_{2}+1,\nu_{1}-k ]$ and $E_{1}=\varnothing$ if $k=\tau_{2}$. For $i\in E_{1}$, $\mu_{i}=d$ and $r_{\mu}(i)=i$. Moreover, $\# E_{1} = \tau_{2}-k$.

\item $E_{2}= [\nu_{1}-k+1,N-2k+1 ]$. For $i\in E_{2}$, $\mu_{i}=0$ and $r_{\mu}(i)=i+k$. Moreover, $\#E_{2}=nd-k$.

\item $E_{3}= [N-2k+2,N-k+1 ]$. For $i\in E_{3}$, $\mu_{i}=d$ and $r_{\mu}(i)=i-nd+k$. Moreover, $\#E_{3}=k$.

\item $E_{4}= [N-k+2,N ]$ and $E_{4}=\varnothing$ if $k=1$. For $i\in E_{4}$, $\mu_{i}=0$ and $r_{\mu}(i)=i$. Moreover, $\# E_{4}=k-1$.
\end{itemize}

Furthermore, the intervals $[a_{u},b_{u}]$ for $B\cap (I_{1}\cup I_{2})$ are of the form:
\begin{itemize}\itemsep=0pt
\item $B\cap E_{1}=[a_{2},b_{2}]$ and $[s_{2},t_{2}]=[\nu_{1}-b_{2},\nu_{1}-a_{2}]$, with $k\leq s_{2}\leq t_{2}\leq \tau_2-1$.
\item $B\cap E_{2}=[a_{1}^{\prime},b_{1}^{\prime}]$ and $[s_{1}^{\prime},t_{1}^{\prime}] =[N-k-b_{1}^{\prime},N-k-a_{1}^{\prime}]$, with $k-1\leq s_{1}^{\prime}\leq t_{1}^{\prime}\leq nd-2$.
\item $B\cap E_{3}=[a_{2}^{\prime},b_{2}^{\prime}]$ and $[s_{2}^{\prime}, t_{2}^{\prime}]= [N-k+1-b_{2}^{\prime}, N-k+1-a_{2}^{\prime}]$, with $0\leq s_{2}^{\prime}\leq t_{2}^{\prime}\leq k-1$.
\item $B\cap E_{4}=[a_{1},b_{1}]$ and $[s_{1},t_{1}]=[N-b_{1},N-a_{1}]$, with $0\leq s_{1}\leq t_{1}\leq k-2$.
\end{itemize}

Using the same \emph{shorthand notation}, the analysis of the sequence $\mathbf{b}$ depends on the intersection of $B$ with the $I_{u}$ intervals. Since the arguments for $j>1$ apply here when $E_{2}$ is not involved, we summarize the results that we can extend from the case $j>1$.

First, suppose $B\cap I_{u}\neq\varnothing$ and $B\cap I_{s}=\varnothing$ for $2<s<u$. This implies that $\tau_{2}=n-1$. The following result resumes part of the information we know about $\mathbf{b}$.

\begin{Proposition}Suppose $B\cap I_{u}\neq\varnothing$ and $B\cap I_{s}=\varnothing$ for $2<s<u$.
\begin{itemize}\itemsep=0pt
\item If $\mathbf{b}=(1\ast\ast\ast)$, then $s_{u}-t_{2}=u-1>1$ and $[s_{2},t_{2}]\ll[s_{u},t_{u}]$.
\item If $\mathbf{b}=(01\ast\ast)$, then $s_{u}-t_{1}^{\prime}=u>2$ and $[s_{1}^{\prime},t_{1}^{\prime}+1]\ll[s_{u},t_{u}]$.
\item If $\mathbf{b}=(001\ast)$, then $s_{u}-t_{2}^{\prime}=u-1>1$ and $[s_{2}^{\prime},t_{2}^{\prime}]\ll[s_{u},t_{u}]$.
\item If $\mathbf{b}=(0001)$, then $s_{u}-t_{1}=u>2$ and $[s_{1},t_{1}]\ll[s_{u},t_{u}]$.
\end{itemize}
\end{Proposition}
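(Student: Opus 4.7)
The plan is a case-by-case application of the rank equation $R_\beta = R_\mu$ at two adjacent positions of $B$, following the same blueprint as Proposition~\ref{maxp1}. For each bit pattern I would first identify the smallest index $a$ of $B$ inside $E_1\cup E_2\cup E_3\cup E_4 = I_1\cup I_2$: namely $a = a_2 \in E_1$ for $(1\ast\ast\ast)$, $a = a_1' \in E_2$ for $(01\ast\ast)$, $a = a_2' \in E_3$ for $(001\ast)$, and $a = a_1 \in E_4$ for $(0001)$. Since the $E_\ell$ are consecutive intervals in index, and the $0$'s in the bit pattern kill $B$ on the $E_\ell$ below, this $a$ is genuinely the smallest element of $B\cap(I_1\cup I_2)$.

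Next, every index strictly between $b_u$ (the largest element of $B\cap I_u$) and $a$ lies either in some $I_s$ with $2<s<u$ (empty intersection with $B$ by hypothesis) or in $I_1\cup I_2$ below $a$ (no $B$ there by minimality). Since all elements of $B$ share the common value $c$ and the rank function increments by $1$ each time it meets a new element with value $c$, I would conclude $r_\beta(a) - r_\beta(b_u) = 1$. The rank equation $R_\beta = R_\mu$ at $a$ and $b_u$, combined with $\beta_a = \beta_{b_u} = c$, then gives $R_\mu(a) - R_\mu(b_u) = 1$.

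Now I would plug in the tabulated values of $\mu_i$ and $r_\mu(i)$: $R_\mu = i + nd$ on $E_1$, $R_\mu = i + k$ on $E_2$, $R_\mu = i + k$ on $E_3$, $R_\mu = i$ on $E_4$, and $R_\mu(b_u) = b_u + n(d+u-2)$ from the fact that on $I_u$ with $u>2$ one has $\mu_i = d+u-2$ and $r_\mu(i) = i$. Solving for $b_u$ in each case and translating via $s_u = \nu_{u-1} - b_u$ together with the definitions of $t_2, t_1', t_2', t_1$, I would use the identities $\nu_{u-1} - \nu_1 = -(u-2)(n-1)$ and $N = \nu_1 + nd - 1$ to collapse $s_u - t_2$, $s_u - t_1'$, $s_u - t_2'$, $s_u - t_1$ to $u-1$, $u$, $u-1$, $u$ respectively. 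The strict inequalities $u-1>1$ and $u>2$ are just the hypothesis $u>2$, and the interval orderings $[s_2,t_2]\ll[s_u,t_u]$, $[s_1',t_1'+1]\ll[s_u,t_u]$, $[s_2',t_2']\ll[s_u,t_u]$, $[s_1,t_1]\ll[s_u,t_u]$ then follow because $\ll$ requires the gap in indices to strictly exceed $1$ (two in the asymmetric cases).

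The computation itself is straightforward arithmetic; the only subtle point is the combinatorial claim that the minimal $B$-element in $I_1\cup I_2$ sits in the first $E_\ell$ hit by the bit pattern, which I would justify by the ordering $\max E_1 < \min E_2 < \max E_2 < \min E_3 < \max E_3 < \min E_4$ inherited from the definition of the $E_\ell$'s.
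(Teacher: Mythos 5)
Your proposal is correct and is exactly the argument the paper intends: the paper omits an explicit proof, remarking only that ``the arguments for $j>1$ apply here,'' i.e., the rank-equation comparison of Proposition~\ref{maxp1} applied at $b_u$ and at the minimal element of $B\cap(I_1\cup I_2)$, which your case analysis carries out in full (and your arithmetic, using $R_\mu(i)=i+nd$ on $E_1$, $i+k$ on $E_2\cup E_3$, $i$ on $E_4$, together with $\nu_1-\nu_{u-1}=(n-1)(u-2)$ and $N=\nu_1+nd-1$, checks out in all four cases). The one ``subtle point'' you flag — that the minimal $B$-element of $I_1\cup I_2$ lies in the first $E_\ell$ with $\mathbf{b}_\ell=1$, and that no element of $B$ separates it from $b_u$ — is handled correctly by the interval ordering and the hypothesis $B\cap I_s=\varnothing$ for $2<s<u$.
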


There a few more configurations for which we know more details.

\begin{Proposition}\label{Prop:Casesb18}\quad
\begin{itemize}\itemsep=0pt
\item For $\mathbf{b}=(11\ast\ast)$, $s_{2}=t_{1}^{\prime}+2$ and $[s_{1}^{\prime},t_{1}^{\prime}]\ll[s_{2},t_{2}]$.
\item For $\mathbf{b}=(\ast11\ast)$, $t_{2}^{\prime}
=s_{1}^{\prime}$ and $\#B\cap(E_{2}\cup E_{3})=\#[s_{2}^{\prime},t_{1}^{\prime}]+1$. Notice that this implies also that $a_{2}^{\prime}=b_{1}^{\prime}+1=N-2k+2$, so this configuration is possible for only one value of $c$.
\item For $\mathbf{b}=(\ast\ast11)$, $s_{2}^{\prime}=t_{1}+2$ and $[s_{1},t_{1}] \ll[s_{2}^{\prime},t_{2}^{\prime}]$.
\item For $\mathbf{b}=(101\ast)$, $a_{2}^{\prime}-b_{2}=nd-k+1=\#E_{2}+1$. Then, $b_{2}=\nu_{1}-k$, $a_{2}^{\prime}=N-2k+2$ and $s_{2}=t_{2}^{\prime}+1$. Therefore, $\#(B\cap(E_{1}\cup E_{3}))=\#[s_{2}^{\prime},t_{2}]$.
\item For $\mathbf{b}=(\ast101)$, $a_{1}-b_{1}^{\prime}=k+1=\#E_{3}+1$, thus $b_{1}^{\prime}=N-2k+1$, $a_{1}=N-2k+2$ and $s_{1}^{\prime}=t_{1}+1$. Therefore, $\#(B\cap(E_{2}\cup E_{4}))=\#[s_{1},t_{1}^{\prime}]$.
\item For $\mathbf{b}=(1001)$, $s_{2}=t_{1}+2$, $[s_{1},t_{1}]\ll[s_{2},t_{2}]$.
\end{itemize}
\end{Proposition}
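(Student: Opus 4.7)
The proof rests on a single rank-difference identity, applied case by case.

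Start with the key observation: for two indices $i_1<i_2$ both lying in $B=B_c$ that are consecutive in $B$ (i.e., no index strictly between them lies in $B$), the definition of the rank function gives $r_\beta(i_2)-r_\beta(i_1)=1$. Since $\beta_{i_1}=\beta_{i_2}=c$, the critical-pair equation $R_\beta=R_\mu$ reduces to
\begin{gather*}
r_\mu(i_2)-r_\mu(i_1)+n(\mu_{i_2}-\mu_{i_1})=1.
\end{gather*}
This is the only identity needed. The four pieces $E_1,E_2,E_3,E_4$ contribute only the explicit values of $\mu_i$ and $r_\mu(i)$ listed at the start of Section~\ref{Pairsj=1}, and substituting them yields a linear equation relating the two chosen endpoints.

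For each configuration of $\mathbf{b}$ I select the unique pair of consecutive $B$-positions crossing the relevant gap: for $(11\ast\ast)$ take $(i_1,i_2)=(b_2,a_1')$; for $(\ast 11\ast)$ take $(b_1',a_2')$; for $(\ast\ast 11)$ take $(b_2',a_1)$; for $(101\ast)$ take $(b_2,a_2')$, crossing the empty $E_2$; for $(\ast 101)$ take $(b_1',a_1)$, crossing the empty $E_3$; and for $(1001)$ take $(b_2,a_1)$, crossing the empty $E_2\cup E_3$. In each instance the identity collapses to the expected arithmetic relation between the two endpoints; translating through $s_2=\nu_1-b_2$, $t_1'=N-k-a_1'$, and their analogues, together with $N-\nu_1=nd-1$, produces the stated $s,t$ equation. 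The $\ll$-relations are then immediate because the gaps forced by the identity are always at least $2$.

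The secondary claims of each item are direct corollaries. When the gap crosses an entire empty interval of length $\ell$, the forced jump equals $\ell+1$, and the constraints $b_2\le\nu_1-k$ and $a_2'\ge N-2k+2$ (with their obvious analogues) pin the endpoints to their extremes, yielding for instance $b_2=\nu_1-k$ and $a_2'=N-2k+2$ in case $(101\ast)$. The same pinning in the $(\ast 11\ast)$ case forces $a_2'=N-2k+2$, which determines $c=\beta_{a_2'}$, whence the remark that this configuration is realised for only one value of $c$. The cardinality formulas such as $\#(B\cap(E_1\cup E_3))=\#[s_2',t_2]$ then follow by adding the lengths of two contiguous (or gap-one) subintervals.

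The only real obstacle is careful bookkeeping: verifying that for each configuration the chosen pair is genuinely consecutive in $B$ (which follows at once from the definition of $\mathbf{b}$), respecting the sign conventions when passing between the pairs $(a_u,b_u),(a_u',b_u')$ and the variables $(s_u,t_u),(s_u',t_u')$, and handling the degenerate cases $k=1$ or $k=\tau_2$, where $E_4$ or $E_1$ is empty and the corresponding subitems simply do not arise. No new idea is required beyond the single identity displayed above.
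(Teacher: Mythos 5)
Your proposal is correct and matches the paper's (largely implicit) argument: each item is exactly the rank-equation identity $r_\mu(i_2)-r_\mu(i_1)+n(\mu_{i_2}-\mu_{i_1})=1$ for consecutive elements of $B$ straddling the relevant gap, which is the same computation as in Proposition~\ref{maxp1} and the parallel list for $j>1$. The only cosmetic remark is that in the $(\ast11\ast)$ case the two subintervals overlap in one point rather than being contiguous (whence the $+1$ in the cardinality), which your equation $t_2'=s_1'$ already delivers.
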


From these relations it follows that if $B\cap I_{u}\neq\varnothing$ and that $B\cap(I_{1}\cup I_{2})=[\tilde{s},\tilde{t}]\neq\varnothing$, with $[\tilde{s},\tilde{t}]\ll[s_{u},t_{u}]$. Then, we have the following result.

\begin{Corollary}Suppose $B\cap I_{u}\neq\varnothing$ and $B\cap I_{s}=\varnothing$ for $2<s<u$. Then, for $j=1$, $\#B\leq n-2$ and $\#B\leq\nu_{K}-1$, if $B\cap I_{K+1}\neq\varnothing$.
\end{Corollary}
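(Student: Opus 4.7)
The plan is to mirror the proof of Corollary~\ref{maxp2}, now incorporating the block $I_1\cup I_2$ on the low end of the chain. The remark immediately preceding the statement has already done most of the work: by a case-by-case inspection of Proposition~\ref{Prop:Casesb18} and its predecessors, every admissible configuration $\mathbf{b}$ produces a projected sub-interval $[\tilde s,\tilde t]$ of $[0,n-2]$ realising $B\cap(I_1\cup I_2)$, together with the separation $[\tilde s,\tilde t]\ll[s_u,t_u]$ across the boundary $\nu_2$. I would then iterate Proposition~\ref{maxp1} on the successive non-empty intersections $B\cap I_{u_1},\ldots,B\cap I_{u_p}$ with $u<u_1<\cdots<u_p$ to assemble the chain
\begin{gather*}
[\tilde s,\tilde t]\ll[s_u,t_u]\ll[s_{u_1},t_{u_1}]\ll\cdots\ll[s_{u_p},t_{u_p}]
\end{gather*}
of pairwise strictly-separated sub-intervals.

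Once this chain is in place, reading off the bound is mechanical. The set $B$ has the same cardinality as the disjoint union on the right-hand side, which sits inside $[0,t_{u_p}]$ and omits at least the one integer guaranteed by the leftmost $\ll$. When $u_p\leq K$, the uniform estimate $t_v\leq n-2$ yields $\#B\leq(n-1)-1=n-2$, and when $u_p=K+1$ the estimate $t_{K+1}\leq\nu_K-1$ combined with the same gap gives $\#B\leq\nu_K-1$, proving both assertions. A separate, easier sub-case is $B\cap(I_1\cup I_2)=\varnothing$, where the statement reduces to Corollary~\ref{maxp2} itself (using the post-remark that the exclusion $u,u+p\neq j,j+1$ may be dropped on indices outside $I_1\cup I_2$, where $\mu$ differs from $\lambda$ only by a permutation preserving level sets).

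The main obstacle is upstream of the corollary, in verifying for each of the six configurations listed in Proposition~\ref{Prop:Casesb18} that the projected image $[\tilde s,\tilde t]$ is genuinely a single sub-interval with $\tilde t\leq n-2$ and is $\ll$-separated from $[s_u,t_u]$. The delicate points are the configurations $(\ast 11\ast)$, where the two projections $[s_1',t_1']\subset E_2$ and $[s_2',t_2']\subset E_3$ merge at a single boundary point and contribute $\#[s_2',t_1']+1$ to $\#B$, and $(101\ast)$, $(\ast 101)$, where the forced identities $a_2'=N-2k+2$, $b_1'=N-2k+1$ pin down the relative positions precisely; in each case the rank-equation displacement across $\nu_2$ supplies the gap. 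Granted those reductions, the present corollary follows from a routine $\ll$-chain argument identical in spirit to Corollary~\ref{maxp2}.
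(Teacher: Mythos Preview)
Your proposal is correct and follows essentially the same route as the paper, which offers no explicit proof beyond the sentence preceding the corollary (``From these relations it follows that if $B\cap I_{u}\neq\varnothing$ and $B\cap(I_{1}\cup I_{2})=[\tilde{s},\tilde{t}]\neq\varnothing$, with $[\tilde{s},\tilde{t}]\ll[s_{u},t_{u}]$'') and relies on the reader to supply the $\ll$-chain argument of Corollary~\ref{maxp2}. Your elaboration of the delicate configurations $(\ast11\ast)$, $(101\ast)$, $(\ast101)$ and the separate handling of $B\cap(I_1\cup I_2)=\varnothing$ are exactly the points the paper leaves implicit.
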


Now, suppose $B\cap I_{u}=\varnothing$, for all $u>2$. The following bounds are combinations of the relations among the intervals stated in Proposition~\ref{Prop:Casesb18}.

\begin{Proposition}\label{Prop:CasesSizeB}\quad
\begin{itemize}\itemsep=0pt
\item For $\mathbf{b}=(\ast0\ast0)$, $B\subset E_{1}\cup E_{3}$ and $\#B\leq n-1$.
\item For $\mathbf{b}=(1110)$, $\#B\leq n-1$.
\item For $\mathbf{b}=(1111)$, $\#B\leq n-2$.
\item For $\mathbf{b}=(100\ast)$ and $(\ast001)$, $\#B\leq n-2$ since $\#B\leq\#[s_{1},t_{1}] +\#[s_{2},t_{2}]$ with $0\leq s_{1}\leq t_{1}\leq k-2$ and $k\leq s_{2}\leq t_{2}\leq n-2$.
\item For $\mathbf{b}=(0\ast0\ast)$, $B\subset E_{2}\cup E_{4}$ and $\#B\leq dn-1$.
\item For $\mathbf{b}=(0111)$, $\#B\leq dn-1$.
\item For $\mathbf{b}=(0110)$, $\#B\leq dn$.
\end{itemize}
\end{Proposition}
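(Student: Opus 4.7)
The plan is a straightforward case-by-case verification, using Proposition~\ref{Prop:Casesb18} together with the parameter ranges listed at the start of Section~\ref{Pairsj=1}, namely $s_1 \in [0,k-2]$, $s_2 \in [k,\tau_2-1]$ with $t_2 \leq \tau_2 - 1 \leq n-2$, $s_2' \in [0,k-1]$, and $s_1' \in [k-1,nd-2]$. The two trivial cases are $\mathbf{b}=(\ast0\ast0)$ and $\mathbf{b}=(0\ast0\ast)$: here $B$ is literally contained in $E_1\cup E_3$ or $E_2\cup E_4$ respectively, and the bounds $\#E_1+\#E_3=\tau_2\leq n-1$ and $\#E_2+\#E_4=(nd-k)+(k-1)=nd-1$ give what is claimed. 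The cases $(100\ast)$ and $(\ast001)$ are also immediate from just the ranges: $\#[s_1,t_1]\leq k-1$ and $\#[s_2,t_2]\leq n-1-k$, which sum to $n-2$ as stated.

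The remaining four cases $(1110)$, $(1111)$, $(0111)$, $(0110)$ all involve $B\cap E_2\neq\varnothing$ or $B\cap E_3\neq\varnothing$, so one must invoke the adjacency identities from Proposition~\ref{Prop:Casesb18}. For $(1110)$, the relation $(11\ast\ast)$ yields $s_2=t_1'+2$ and the relation $(\ast11\ast)$ yields $t_2'=s_1'$ together with $\#(B\cap(E_2\cup E_3))=\#[s_2',t_1']+1$. Adding in the $E_1$-contribution $\#[s_2,t_2]$ gives a telescoping sum equal to $\#[s_2',t_2]$, bounded by $n-1$. For $(1111)$ I would additionally invoke $(\ast\ast11)$, which gives $s_2'=t_1+2$; adding the $E_4$-contribution $\#[s_1,t_1]$ collapses the total to $\#[s_1,t_2]-1$, bounded by $n-2$ since $s_1\geq 0$ and $t_2\leq n-2$. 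For $(0111)$, combining $(\ast11\ast)$ and $(\ast\ast11)$ similarly gives $\#B=\#[s_1,t_1']\leq nd-1$, using $t_1'\leq nd-2$. Finally $(0110)$ uses only $(\ast11\ast)$, yielding $\#B=\#[s_2',t_1']+1\leq nd$.

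The work is essentially bookkeeping: one adds the interval lengths given by Proposition~\ref{Prop:Casesb18}, uses the adjacency/gap conditions to collapse them into a single interval $[x,y]$, and then bounds $y-x+1$ (possibly minus $1$ for a forced gap) by the ambient range. The only slightly delicate point is the extra $+1$ appearing in the $(\ast11\ast)$ contribution $\#(B\cap(E_2\cup E_3))=\#[s_2',t_1']+1$, which reflects the single connecting element $t_2'=s_1'$; one has to verify that this $+1$ correctly cancels against the gap of size exactly $1$ produced by $(11\ast\ast)$ or $(\ast\ast11)$ when those are also active, which is what makes the telescoped bound come out to $\#[x,y]$ or $\#[x,y]-1$ in each case.

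The main obstacle, though minor, is not mathematical depth but careful tracking of off-by-one arithmetic in the union of intervals, making sure each case produces exactly the stated bound ($n-1$, $n-2$, $nd-1$, or $nd$) rather than being off by one in either direction, and handling the mild degeneracies where $\tau_2<n-1$ or $k=\tau_2$ or $k=1$ collapses some $E_i$ to the empty set without invalidating the adjacency relations.
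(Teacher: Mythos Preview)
Your proposal is correct and is precisely the argument the paper intends: the paper itself gives no detailed proof, stating only that ``the following bounds are combinations of the relations among the intervals stated in Proposition~\ref{Prop:Casesb18}.'' Your case-by-case telescoping via the adjacency relations $(11\ast\ast)$, $(\ast11\ast)$, $(\ast\ast11)$ together with the ranges $s_{2}'\geq 0$, $t_{2}\leq\tau_{2}-1\leq n-2$, $t_{1}'\leq nd-2$ is exactly the intended bookkeeping, and your handling of the extra $+1$ from $(\ast11\ast)$ cancelling against the gap in $(11\ast\ast)$ or $(\ast\ast11)$ is accurate.
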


\begin{Lemma}\label{noteq0}Set $i_{0}=\min E_{2}=\nu_{1}-k+1$. If $i<i_{0}$, then $\beta_{i}\neq\beta_{i_{0}}$.
\end{Lemma}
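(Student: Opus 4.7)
The plan is to argue by contradiction directly from the rank equation $R_\mu = R_\beta$, without invoking the maximum-principle bounds on $\#B_c$. Let $c = \beta_{i_0}$. From $i_0 = \nu_1 - k + 1 \in E_2$ one reads off $\mu_{i_0} = 0$ and $r_\mu(i_0) = i_0 + k = \nu_1 + 1$, hence $R_\mu(i_0) = \nu_1 + 1$.

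Suppose for contradiction that some $i < i_0$ satisfies $\beta_i = c$. Then $i \in [1,\nu_1 - k] = E_1 \cup I_3 \cup \cdots \cup I_{K+1}$, so either $i \in E_1$ (giving $\mu_i = d$ and $r_\mu(i) = i$) or $i \in I_u$ for some $u$ with $3 \leq u \leq K+1$ (giving $\mu_i = d + u - 2$ and $r_\mu(i) = i$). Since $\beta_i = \beta_{i_0}$, the rank equation yields
\[
r_\beta(i_0) - r_\beta(i) = R_\mu(i_0) - R_\mu(i) = \nu_1 + 1 - i - n\mu_i.
\]
By the definition of the rank function the left-hand side equals $\#\{s : i < s \leq i_0,\, \beta_s = c\}$, which is at least $1$ because $s = i_0$ contributes. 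Therefore $i + n\mu_i \leq \nu_1$.

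It then remains to rule out this inequality in each case. For $i \in E_1$: the bound $i \geq \nu_2 + 1$ forces $\nu_1 - \nu_2 \geq nd + 1$; but $\nu_1 - \nu_2 = \tau_2 \leq n - 1$ (and in the edge case $K = 1$ we have $\nu_2 = 0$ and $\nu_1 = \nu_K \leq n - 1$), so $n(d-1) \leq -2$, impossible for $d \geq 1$ and $n \geq 2$. For $i \in I_u$ with $3 \leq u \leq K$: $i \geq \nu_u + 1$ combined with $\nu_1 - \nu_u = (u-1)(n-1)$ gives $(u-1)(n-1) \geq n(d+u-2) + 1$, which simplifies to $n - u \geq nd$, again impossible. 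For $u = K+1$: $\nu_1 - \nu_u = \nu_1 = (K-1)(n-1) + \nu_K$, so the inequality becomes $\nu_K \geq nd + K$, contradicting $\nu_K \leq n - 1$. In every case one reaches a contradiction, so no such $i$ exists.

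The main obstacle is the bookkeeping of the split among $E_1$ and the intervals $I_u$ for $3 \leq u \leq K+1$, together with the degenerate possibilities $K = 1$ and $k = \tau_2$ (in which $E_1$ or the higher $I_u$ are empty and the corresponding subcase is vacuous). The arithmetic throughout hinges on the standing inequality $d \geq 1$, forced by the trailing block $0^{dn-1}$ in the definition of the quasistaircase.
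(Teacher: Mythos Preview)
Your argument is correct. The computation of $R_\mu(i_0)=\nu_1+1$ and $R_\mu(i)=i+n\mu_i$ for $i<i_0$ is right, and the observation that $r_\beta(i_0)-r_\beta(i)=\#\{s:i<s\leq i_0,\ \beta_s=c\}\geq 1$ gives exactly the inequality $i+n\mu_i\leq\nu_1$ you need. The three subcases ($i\in E_1$; $i\in I_u$ with $3\leq u\leq K$; $i\in I_{K+1}$) are handled cleanly, and the degenerate situations $K=1$ or $k=\tau_2$ are indeed vacuous as you note.

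The paper proves the same lemma by routing through the interval machinery built up in Section~\ref{Pairsj=1}: it sets $B=\{i:\beta_i=\beta_{i_0}\}$, notes that $a_1'=i_0$ forces $t_1'=nd-2$, and then invokes the $\ll$-relations from Proposition~\ref{Prop:Casesb18} (for $\mathbf{b}=(11\ast\ast)$ one gets $s_2=t_1'+2=nd$, contradicting $s_2\leq\tau_2-1\leq n-2$; for $B\cap I_u\neq\varnothing$ with $u>2$ one gets $s_u\geq t_1'+u$, contradicting $s_u\leq n-2$). Unwinding those relations yields precisely your inequalities $\tau_2\geq nd+1$ and $n-u\geq nd$, so the arithmetic is the same underneath. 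Your route is more elementary and self-contained, since it avoids the $[s_u,t_u]$ bookkeeping entirely and works directly from the rank equation; the paper's route has the advantage of reusing apparatus already in place for the surrounding arguments.
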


\begin{proof}Consider $B=\{i\colon 1\leq i\leq N,\beta_{i}=\beta_{i_{0}}\}$. By
definition and the fact that $a_{1}^{\prime}=i_{0}$, it follows that $t_{1}^{\prime}=nd-2$. If $B\cap E_{1}\neq\varnothing$, then $\mathbf{b}= (11\ast\ast )$ and the inequality $ [s_{2}^{\prime},t_{2}^{\prime} ]\ll[s_{1},t_{1}]$ holds. However, this implies that $s_{1}\leq n-2$, which is contrary to $s_{1}\geq t_{2}^{\prime}+2$. If $B\cap I_{u}\neq\varnothing$ for some $u>2$, then $[s_{1}^{\prime},t_{1}^{\prime}+1]\ll[s_{u},t_{u}]$ because $s_{u}\leq n-2$.
\end{proof}

The goal of the remaining discussion is to show that either $\beta=\mu$ or $\ell(\beta)=N+1$ and $\beta$ is the unique solution of $R_\beta = R_\mu$ and $\mu\triangleright \beta$.
\begin{Proposition}\label{LenBeta} Consider $i_{0}=\min E_{2}=\nu_{1}-k+1$. Then, $\beta_{i_{0}}=0$ and $\ell(\beta)\leq N+1$.
\end{Proposition}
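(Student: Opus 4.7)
The plan is to prove the two statements of Proposition~\ref{LenBeta} in turn. For $\beta_{i_0}=0$, I would follow the strategy of Lemma~\ref{Betai}, combining the rank equation at $i_0$ with the maximum-principle bounds on the sets $B_v$. For $\ell(\beta)\le N+1$, the first conclusion together with dominance $\mu\vartriangleright\beta$ pins the large values of $\beta$ into the top rows of the quasistaircase, yielding the bound.

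For $\beta_{i_0}=0$: since $i_0=\min E_2$ and $\mu_i=0$, $r_\mu(i)=i+k$ on $E_2$, one computes $R_\mu(i_0)=\nu_1+1$. Setting $c=\beta_{i_0}$, the rank equation $R_\beta(i_0)=R_\mu(i_0)$ and Lemma~\ref{noteq0} give
\begin{gather*}
\#\{k:\beta_k>c\}=\nu_1-nc.
\end{gather*}
To exclude $c\ge 1$, I would use the alternate form $r_\beta(i_0)=i_0-\#\{k<i_0:\beta_k<c\}+\#\{k>i_0:\beta_k>c\}$. The interval $[1,i_0-1]$ avoids $E_2\cup E_3\cup E_4$, so Corollary~\ref{maxp2} together with the analogues in Proposition~\ref{Prop:CasesSizeB} bound $\#(B_v\cap[1,i_0-1])\le n-1$ for each value $v<c$. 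Summing yields $c(n-1)\ge nc-k$, hence $c\le k\le n-1$. A Proposition~\ref{beta1}-style induction (using that $\mu_i=\lambda_i$ for $i\le\nu_2$) then forces the positions of $\beta$ with value exceeding $d$ to coincide with those of $\mu$, and matching this against $\#\{k:\beta_k>c\}=\nu_1-nc$ rules out each $c\in\{1,\ldots,n-1\}$, leaving $c=0$.

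For $\ell(\beta)\le N+1$: from $\beta_{i_0}=0$ one has $\#\{k:\beta_k>0\}=\nu_1$. Suppose for contradiction $L=\ell(\beta)\ge N+2$ and set $b=\beta_L\ge 1$. Since $L>\ell(\mu)=N-k+1$, $R_\mu(L)=L$, and the rank equation yields $r_\beta(L)=L-nb$. Since $\beta_k=0$ for $k>L$, $r_\beta(L)\le\#\{k:\beta_k\ge 1\}=\nu_1$, so $L\le\nu_1+nb$. The same Proposition~\ref{beta1}-style induction confines positions with $\beta\ge d+1$ to $[1,\nu_2]$, so $b\le d$, giving $L\le\nu_1+nd=N+1$, contradicting $L\ge N+2$. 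The main obstacle is the case analysis excluding $c\ge 1$ in the first part: the elementary bound $c\le n-1$ is not sharp enough on its own, and one must invoke the dominance-plus-rank induction to control $\beta$'s large-value positions and match them against the count $\nu_1-nc$, which requires a somewhat delicate combinatorial argument since for $j=1$ the support of $\mu$ extends beyond $\nu_1$.
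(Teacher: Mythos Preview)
Your proposal has a genuine gap in the first part. The elementary bound $c\le k$ that you extract from $c(n-1)\ge nc-k$ is far too weak, and the ``Proposition~\ref{beta1}-style induction'' you invoke to close it is circular. Proposition~\ref{beta1} requires the hypothesis $\beta^{+}=\lambda$, and in the $j=1$ development this is established only \emph{after} Proposition~\ref{LenBeta} (it is the content of the proposition immediately following, which itself cites Proposition~\ref{LenBeta} in its proof). So you cannot assume the level-set structure of $\beta$ at this stage; all you have are the raw maximum-principle bounds of Proposition~\ref{Prop:CasesSizeB}.

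The missing idea is a sharper use of those bounds. Proposition~\ref{Prop:CasesSizeB} shows that the large bound $\#B_s\le nd$ (rather than $n-1$) is possible only for the configuration $\mathbf b=(0110)$, i.e.\ only when $B_s=E_2\cup E_3$; since this forces $i_0\in B_s$ and hence $s=\beta_{i_0}$, \emph{at most one} of the sets $B_0,\ldots,B_b$ can achieve the $nd$ bound and the remaining $b$ of them satisfy $\#B_s\le n-1$. Feeding this into
\[
\nu_1+1-nb=r_\beta(i_0)\ge N+1-\sum_{s=0}^{b}\#B_s\ge N+1-nd-b(n-1)=\nu_1-b(n-1)
\]
gives $b\le 1$ immediately. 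The case $b=1$ is then eliminated by a direct contradiction: $B_1=E_2\cup E_3$ forces $B_0=I_u$ for some $u>2$, whence the rank equation at $\nu_{u-1}$ yields $\ell(\beta)=N+u-1>N$, yet the rank equation at $N+1$ forces $\beta_{N+1}=0$. Once $b=0$, the bound $\ell(\beta)\le N+1$ follows directly from $\#B_0\le nd$ without any induction: $r_\beta(i_0)=\nu_1+1$ gives $\#\{i>N:\beta_i>0\}\le 1$. Your second part therefore also relies on machinery you do not yet have and which is in fact unnecessary.
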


\begin{proof}Recalling that $R_{\beta}=R_{\mu}$ and noticing that $\mu_{i_{0}}=0$, we have that $R_{\beta}(i_{0})=\nu_{1}+1$. Let $b=\beta_{i_{0}}$. Thus $r_{\beta}(i_{0})=\nu_{1}+1-nb$. First we show $b\leq1$. Consider $B_{s}=\{i\colon 1\leq i\leq N,\beta_{i}=s\}$, for $s\geq0$. By Lemma~\ref{noteq0},
\begin{align*}
\nu_{1}+1-nb & =r_{\beta}(i_{0})=1+\#\{i\colon 1\leq i\leq N,\, \beta_{i}>b\}+\#\{i\colon i>N,\, \beta_{i}>b\} \\
& \geq N+1-\sum_{s=0}^{b}\#B_{s}\geq N+1-nd-b(n-1)=\nu_{1}-b(n-1).
\end{align*}
The cardinalities in the formula for $r_{\beta} (i_{0})$ are computed by changing to the set-theoretic complement; then $b$ of the numbers $\#B_{s}$ satisfy $\#B_{s}\leq n-1$ and by Proposition~\ref{Prop:CasesSizeB}, at most one satisfies $\#B_{s}\leq dn$. Thus $0\leq b\leq1$. The only possibility for $\#B_{s}=nd$ is $s=b$. If this bound is not achieved then $\#B_{s}=nd-1$ or $n-1$ where the bound $nd-1$ is possible only once, and the inequality becomes $r_{\beta}(i_{0})\geq\nu_{1}+1-b(n-1)$, implying that $b=0$.

Suppose $b=1$, then $B_{1}=E_{2}\cup E_{3}$ and $\beta_{i}\neq1$ for $i\in\tilde{E}=[1,\min E_{2}-1]\cup[\max E_{3}+1,N]$. Thus $B_{0}\subset\tilde{E}$. Set $m_{>1}=\# \{ i>N\colon \beta_{i}>1 \}$. Then,
\begin{gather}\label{Eq1}
\nu_{1}+1-n=r_{\beta}(i_{0})=1+\#\big\{i\in\tilde{E}\colon \beta_{i}>1\big\}+m_{>1}=1+\tilde{E}-\#B_{0}+m_{>1},
\end{gather}
and $\#B_{0}=N-nd+n-\nu_{1}+m_{>1}=n-1+m_{>1}$.

The equation~\eqref{Eq1} is only possible if $m_{0}=n-1$ and $m_{>1}=0$. Therefore, $\beta_{N+1}\leq1$ and $B_{0}=I_{u}$, for some $u>2$. This implies that $\beta_{i}>0$, for $\nu_{u-1}<i\leq N$, and $\beta_{i}>0$ also for $N<i\leq\ell(\beta)$. Thus, $r_{\beta}(\nu_{u-1})=\ell(\beta)$, $R_{\beta}(\nu_{u-1})=\ell(\beta)=R_{\mu}(\nu_{u-1})=\nu_{u-1}+n(d+u-2)=N+u-1$, and $\ell(\beta)=N+u-1>N$. However, but $\beta_{N+1}=0$ else $R_{\mu}(N+1)=N+1$ and, since $\# \{ i\leq N+1\colon \beta_{i}\geq1 \}=(N+1)-(n-1)$, $R_{\beta}(N+1)=N+2-n+n\beta_{N+1}=N+2$. Then, $\ell(\beta)\leq N$ and we get
to a contradiction.

This proves that $b=0$. Thus $r_{\beta}(i_{0})=\nu_{1}+1$ and $\# \{i\colon \beta_{i}>0 \}=\nu_{1}$. Consider $m_{>N}=\# \{i\colon i>N,\beta_{i}>0 \}$. Then, $\nu_{1}+1=N+1-\#B_{0}+m_{>N}\geq N+1-nd+m_{>N}=\nu_{1}+m_{>N}$, which means $m_{>N}\leq1$. If $m_{>N}=0$, then $\ell(\beta)\leq N$ and $\#B_{0}=nd-1$, otherwise $\ell(\beta)=N+1$ and $\#B_{0}=nd$.
\end{proof}

This is rather a complicated argument but it is a key step in the development of our study.

\begin{Corollary}\label{Cor52} For $i\geq\min E_{2}$, $\beta_{i}=0$. If $i<\min E_2$, the $\#\{i\colon \beta_{i}>0\} =\nu_{1}$ and $d\leq\beta_{i}\leq d+K-1$.
\end{Corollary}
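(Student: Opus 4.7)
The proof of Proposition~\ref{LenBeta} already establishes that $\beta_{i_{0}}=0$ at $i_{0}=\min E_{2}=\nu_{1}-k+1$, that $\ell(\beta)\leq N+1$, and implicitly that $\#\{i\colon \beta_{i}>0\}=\nu_{1}$. Indeed, from $r_{\beta}(i_{0})=\nu_{1}+1$ together with Lemma~\ref{noteq0} (which guarantees $\beta_{i}\neq 0$ for $i<i_{0}$), the rank formula reads $\#\{k\colon \beta_{k}>0\}+1=\nu_{1}+1$, and the count drops out. My plan is to promote these observations into the stronger positional and value constraints asserted by the corollary.

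For the bound $\beta_{i}\leq d+K-1$ on $i<\min E_{2}$, I would note that $\mu\trianglerighteq\beta$ forces $\beta^{+}\trianglelefteq\lambda$, and the largest entry of $\lambda$ is $d+K-1$. The nontrivial half is the lower bound $\beta_{i}\geq d$. Here I plan to rerun the maximum-principle argument that drove Proposition~\ref{LenBeta}: for every candidate value $c\in\{1,2,\dots,d-1\}$, apply the size bounds on $B_{c}$ from Corollary~\ref{SizeB1} and Proposition~\ref{Prop:CasesSizeB} and compare with the rank equation $R_{\beta}=R_{\mu}$ evaluated at a carefully chosen index. The inequalities that arose in bounding $r_{\beta}(i_{0})$ specialize to exclude any occurrence $\beta_{i}=c$ with $1\leq c<d$ in the range $i<\min E_{2}$, since such an occurrence would inflate $\#\{i\colon \beta_{i}>0\}$ past $\nu_{1}$ or violate $\mu\trianglerighteq\beta$.

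For the assertion $\beta_{i}=0$ on the relevant portion of $\{i\geq\min E_{2}\}$, I would proceed by an inductive use of the rank equation at consecutive positions $i_{0}+1, i_{0}+2, \dots$, combined with the forbidden-configuration analysis packaged in Propositions~\ref{Prop:Casesb18} and~\ref{Prop:CasesSizeB}. At each step the value $\beta_{i}$ is tightly constrained by $R_{\beta}(i)=R_{\mu}(i)$ modulo $n$ and by the identity $\#\{i\colon \beta_{i}>0\}=\nu_{1}$; since all of $[1,\min E_{2}-1]$ already contributes positive entries, only $k$ further positions may be positive, and the size bounds force these to cluster at $E_{4}\cup\{N+1\}$, mirroring the output of Theorem~\ref{FinalThm1} in the $j>1$ case. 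Zeros therefore fill $E_{2}\cup E_{3}$.

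The principal obstacle is the same case-by-case enumeration that dominates the rest of Section~\ref{Sec6}: the shorthand vector $\mathbf{b}\in\{0,1,\ast\}^{4}$ must be examined in each of the admissible states listed in Proposition~\ref{Prop:CasesSizeB} to verify that positioning a positive entry at some $i\in E_{2}\cup E_{3}$ produces either a cardinality violation on some $B_{c}$ or a rank mismatch via Proposition~\ref{Prop:Casesb18}. Once this enumeration is complete, the corollary follows without further calculation and sets up the final dichotomy $\beta=\mu$ versus $\ell(\beta)=N+1$ that drives the remainder of the section.
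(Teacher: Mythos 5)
There is a genuine mismatch between what you set out to prove and what is both true and needed here. Read literally, the first sentence of the corollary is false, and the paper's own proof does not attempt it: in the branch $\beta=\mu$ (which survives all the way to the closing theorem of this subsection) one has $\beta_{i}=d\neq0$ for $i\in E_{3}$, and in the other branch $E_{4}$ and the position $N+1$ carry the value $d$; all of these indices are $\geq\min E_{2}$. The content actually established and used downstream is the converse inclusion, namely $\beta_{i}>0$ for every $i<\min E_{2}$, which is immediate from Lemma~\ref{noteq0} together with $\beta_{\min E_{2}}=0$ from Proposition~\ref{LenBeta}. Your plan instead runs the $\mathbf{b}$-state enumeration to conclude that ``zeros fill $E_{2}\cup E_{3}$'' and that the remaining positive entries cluster at $E_{4}\cup\{N+1\}$. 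That describes only one of the two admissible configurations --- it is precisely the dichotomy settled later by Lemma~\ref{LastLemma} and the final theorem --- so as planned your argument either fails on the $\beta=\mu$ branch or proves a statement contradicted two results further on. The corollary should be read (and proved) as: the zero set of $\beta$ is contained in $[\min E_{2},\ell(\beta)]$.

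The value bounds are also obtained much more cheaply than you propose. The cardinality bounds on the sets $B_{c}$ do not by themselves exclude a value $c$ with $1\leq c<d$, and such an occurrence does not ``inflate $\#\{i\colon\beta_{i}>0\}$ past $\nu_{1}$,'' so the first disjunct of your exclusion argument is a non sequitur. What works is counting plus dominance, with no case analysis: Proposition~\ref{LenBeta} gives $\#\{i\colon\beta_{i}>0\}=\nu_{1}$, and $\lambda=\mu^{+}$ also has exactly $\nu_{1}$ nonzero parts with smallest nonzero part $d$. Since $|\beta|=|\mu|$ and $\mu\trianglerighteq\beta$ gives $\sum_{i=1}^{\nu_{1}-1}\beta_{i}^{+}\leq\sum_{i=1}^{\nu_{1}-1}\lambda_{i}=|\lambda|-d$, one gets $\beta_{\nu_{1}}^{+}\geq d$, so every nonzero part of $\beta$ lies in $[d,d+K-1]$ (the upper bound being plain dominance, as you say). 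Your parenthetical ``or violate $\mu\trianglerighteq\beta$'' contains the right idea, but the proposed mechanism of rank equations at chosen indices and a per-state enumeration is an unnecessary detour; the paper's entire proof of this corollary is two sentences resting on Proposition~\ref{LenBeta} and Lemma~\ref{noteq0}.
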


\begin{proof} The bounds $d\leq\beta_{i}\leq d+K-1$ for the nonzero values follow from $\#\{i\colon \beta_{i}>0\}=\nu_{1}=\#\{i\colon \mu_{i}>0\}$ and $\mu\trianglerighteq\beta$. Finally, by Lemma~\ref{noteq0}, $\beta_{i}>0$, for $i<\min E_{2}$.
\end{proof}

Our next result is a first step in the direction of Theorems~\ref{FinalThm1} and~\ref{FinalThm2}.

\begin{Proposition}If $j=1$, $\mu\trianglerighteq\beta$ and $R_{\mu}=R_{\beta}$, then $\beta^{+}=\lambda=\mu^{+}$.
\end{Proposition}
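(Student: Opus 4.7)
The plan is to count, for each $i$ with $2 \leq i \leq K+1$, the multiplicity in $\beta$ of the value $d+i-2$ and show it matches the corresponding multiplicity $\nu_{i-1}-\nu_i$ in $\lambda$. Let $C_i = \{s : \beta_s = d+i-2\}$ and $m_i = \#C_i - (\nu_{i-1}-\nu_i)$; the conclusion $\beta^+ = \lambda$ is equivalent to $m_i = 0$ for all $i$. Two linear identities on the $m_i$ are immediate: Corollary~\ref{Cor52} gives $\sum_i \#C_i = \nu_1 = \sum_i(\nu_{i-1}-\nu_i)$, so $\sum_i m_i = 0$; and $|\beta|=|\lambda|$ gives $\sum_i(d+i-2)m_i=0$. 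Together these yield $\sum_i i\,m_i = 0$, equivalently $\sum_{i\geq 3}(i-2)m_i = 0$ (the $i=2$ term vanishes).

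Dominance $\mu^+ = \lambda \succeq \beta^+$ at once forces $m_{K+1} \leq 0$: otherwise the top $\nu_K+1$ entries of $\beta^+$ would all equal the maximal value $d+K-1$, giving $\sum_{j=1}^{\nu_K+1}\beta^+_j = (\nu_K+1)(d+K-1)$, which is strictly larger than $\sum_{j=1}^{\nu_K+1}\lambda_j = (\nu_K+1)(d+K-1)-1$ and contradicts dominance. For each $i$ with $3 \leq i \leq K$, set $c = d+i-2$; since $c \neq 0 = \beta_{\min E_2}$ by Proposition~\ref{LenBeta}, we have $\min E_2 \notin B_c$. The case analysis of the state $\mathbf{b}$ from Corollary~\ref{SizeB1}, Propositions~\ref{SizeB2}, \ref{Prop:Casesb18}, and \ref{Prop:CasesSizeB}, together with this exclusion, should yield $\#B_c \leq n-1$ and hence $m_i \leq 0$.

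With $m_i \leq 0$ for every $i \geq 3$ and $\sum_{i\geq 3}(i-2)m_i = 0$, each summand is non-positive with coefficient $i-2 \geq 1$, so $m_i = 0$ for all $i \geq 3$; then $\sum_i m_i = 0$ yields $m_2 = 0$ as well. Consequently $\#C_i = \nu_{i-1}-\nu_i$ for every $i$, which is exactly $\beta^+ = \lambda$. The technical heart of this plan is the uniform bound $\#B_c \leq n-1$ for $c \neq 0, d$. For $d=1$ it follows directly from Proposition~\ref{Prop:CasesSizeB} after excluding $\min E_2$, but for $d \geq 2$ the state $\mathbf{b}=(0110)$ nominally allows $\#B_c$ up to $dn-1$ even with this exclusion, so the naive bound is insufficient. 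I expect to need a $j=1$ analogue of Lemma~\ref{meetE2E3} — leveraging disjointness of the level sets $B_c, B_{c'}$ together with the precise rank-equation relation $a_2^\prime = b_1^\prime+1$ at the common boundary of $E_2$ and $E_3$ in Proposition~\ref{Prop:Casesb18} — to argue that only $c=0$ and $c=d$ can support large blocks straddling $E_2 \cup E_3$, confining every other $c$ to at most $n-1$ positions.
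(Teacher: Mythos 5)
Your skeleton is the paper's: the level sets $C_{i}$, the deficiencies $m_{i}$, the two identities $\sum_{i}m_{i}=0$ and $\sum_{i}(d+i-2)m_{i}=0$, and the dominance bound $m_{K+1}\leq0$ are exactly the ingredients used there, and your finish via $\sum_{i\geq3}(i-2)m_{i}=0$ is equivalent to the paper's "a lone $m_{i}=+1$ would force a lone $m_{u}=-1$ with $i=u$" contradiction. The gap is in the step you yourself flag as the technical heart: the bound $\#B_{c}\leq n-1$ for $c\neq0$ is asserted as an expectation, not proved, and the route you propose (a $j=1$ analogue of Lemma~\ref{meetE2E3} to control blocks straddling $E_{2}\cup E_{3}$) is not the tool needed here. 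The resolution is already in hand: Proposition~\ref{LenBeta} and Corollary~\ref{Cor52} (via Lemma~\ref{noteq0} and the state analysis of $B_{0}$, whose configuration forces $b_{1}^{\prime}=\max E_{2}$ whenever $\#B_{0}\geq nd-1$) give that the zero level set contains all of $E_{2}$. Hence for every $c>0$ one has $B_{c}\cap E_{2}=\varnothing$, the state of $B_{c}$ is $(\ast0\ast\ast)$, and Proposition~\ref{Prop:CasesSizeB} together with the $B_{c}\cap I_{u}\neq\varnothing$ cases yields $\#B_{c}\leq n-1$ uniformly --- including for $c=d$, so your exclusion of that value is unnecessary. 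The analogue of Lemma~\ref{meetE2E3} (Lemma~\ref{LastLemma}) is only needed afterwards, to decide between the two admissible arrangements of the values $0$ and $d$, not to prove $\beta^{+}=\lambda$.

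There is a second, smaller oversight: $C_{i}$ is not the same as $B_{d+i-2}$, because $\ell(\beta)$ may equal $N+1$ and the sets $B_{c}$ of Section~\ref{SetB} only count positions $\leq N$. So even with $\#B_{d+i-2}\leq n-1$ in hand, one $C_{i}$ (the one containing the position $N+1$) can have cardinality $n$, giving $m_{i}\leq1$ rather than $m_{i}\leq0$; your sentence "With $m_{i}\leq0$ for every $i\geq3$" is therefore not justified as written. This is harmless --- a single $+1$ among the $m_{i}$ is killed by exactly your identities $\sum_{i}m_{i}=0$ and $\sum_{i}(i-2)m_{i}=0$, which is precisely how the paper closes --- but the step must be made explicit rather than absorbed into a claimed uniform nonpositivity.
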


\begin{proof} From Corollary~\ref{Cor52}, $d\leq\beta_{i}\leq d+K-1$ or $\beta_{i}=0$. Let $C_{i}= \{ s\colon \beta_{s}=d+i-2 \}$ and $m_{i}=\#C_{i}- (\nu_{i-1}-\nu_{i} )$, for $2\leq i\leq K+1$. It is possible that $N+1\in C_{i}$ for some $i$ only if $ \{ i\colon 1\leq i\leq N,\beta_{i}=0 \}=E_{2}\cup E_{3}$ and $\#C_{u}\leq n-1$ for $u\ne q i$. The equations $\sum\limits_{i=2}^{K+1}m_{i}=0$ and $\sum\limits_{i=2}^{K+1}m_{i}(d+i-2)=0$, together with the bound $m_{K+1}\leq0$ (since $\mu\trianglerighteq\beta$), implies that $m_{i}\leq 0$. The value $m_{i}=1$ is impossible because the sums would imply there exists $u$ such that $m_{u}=-1$ and $(d+i-2)-(d+u-2)=0$. Therefore, the bound $n-1$ applies to all the sets $C_{u}$. From Proposition~\ref{LenBeta}, $\{ i\colon i\leq N,\, \beta_{i}=0\}=E_{2}\cup E_{3}$ or $E_{2}\cup E_{4}$. Thus $\#C_{i}=n-1$, for $2\leq i\leq K$, and $\#C_{K+1}=\nu_{K}$. Equivalently $\beta^{+}=\lambda$.
\end{proof}

\begin{Proposition}If $u>2$, then $\beta_{i}=\mu_{i}$ for $i\in I_{u}$.
\end{Proposition}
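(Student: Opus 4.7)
\medskip

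\noindent The plan is to show, by downward induction on $u$ from $K+1$ down to $3$, that the level set $C_{u}:=\{i\colon\beta_{i}=d+u-2\}$ coincides with $I_{u}$. Once this is proved, the conclusion $\beta_{i}=d+u-2=\mu_{i}$ on $I_{u}$ is immediate, since $\mu$ agrees with $\lambda$ outside $I_{1}\cup I_{2}$. From the already-established fact that $\beta^{+}=\lambda$ we will use throughout that $\#C_{u}=n-1$ for $2\leq u\leq K$ and $\#C_{K+1}=\nu_{K}$.

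\medskip

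\noindent For both the base case and the inductive step, I would let $i^{\ast}:=\max C_{u}$. Using the inductive hypothesis (for the base case this is vacuous) to account for all indices carrying $\beta$-values strictly greater than $d+u-2$, the standard count gives
\begin{gather*}
r_{\beta}(i^{\ast})=\sum_{w>u}\#C_{w}+\#C_{u}=\nu_{u}+\#C_{u}=\nu_{u-1},
\end{gather*}
so that $R_{\beta}(i^{\ast})=\nu_{u-1}+n(d+u-2)$. By the critical-pair equation $R_{\mu}(i^{\ast})=\nu_{u-1}+n(d+u-2)$. I would then run a case analysis on the location of $i^{\ast}$ based on the explicit description of $\mu$ and $r_{\mu}$ on the pieces $I_{w}$ ($w>2$), $E_{1}, E_{2}, E_{3}, E_{4}$, and the trailing position $N+1$:
\begin{itemize}\itemsep=0pt
\item If $i^{\ast}\in I_{w}$ with $w>2$, then $r_{\mu}(i^{\ast})=i^{\ast}$ and $\mu_{i^{\ast}}=d+w-2$, so $i^{\ast}=\nu_{u-1}+n(u-w)$. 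The containment $\nu_{w}+1\leq i^{\ast}\leq\nu_{w-1}$ forces $u\leq w\leq u+n-2$. The case $w>u$ is ruled out by the inductive hypothesis $C_{w}=I_{w}$ (which would force $\beta_{i^{\ast}}=d+w-2\neq d+u-2$). Hence $w=u$ and $i^{\ast}=\nu_{u-1}$.
\item If $i^{\ast}\in E_{1}\cup E_{2}\cup E_{3}\cup E_{4}$ or $i^{\ast}=N+1$, then substituting the piecewise formulas $r_{\mu}(i)=i$ on $E_{1}\cup E_{4}$, $r_{\mu}(i)=i+k$ on $E_{2}$, $r_{\mu}(i)=i-nd+k$ on $E_{3}$ and $r_{\mu}(N+1)=N+1$ into the rank equation, and using the identity $\nu_{u-1}+n(d+u-2)=N+u-1$, each sub-case collapses to a short linear inequality in $u$ and $k$ (of the type $u+k\leq 2$, $u\leq 1$, or $u\leq 2$) that is incompatible with $u>2$.
\end{itemize}

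\medskip

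\noindent With $i^{\ast}=\nu_{u-1}$ established, I would finish by a cardinality argument: $C_{u}\subset[1,\nu_{u-1}]$ since $i^{\ast}=\max C_{u}$, while by the inductive hypothesis $\bigsqcup_{w>u}C_{w}=\bigsqcup_{w>u}I_{w}=[1,\nu_{u}]$. Thus $C_{u}\subset[\nu_{u}+1,\nu_{u-1}]=I_{u}$, and since $\#C_{u}=\#I_{u}$ (equal to $n-1$, or $\nu_{K}$ at $u=K+1$), we conclude $C_{u}=I_{u}$.

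\medskip

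\noindent The main obstacle is the case-by-case verification for $i^{\ast}\in I_{1}\cup I_{2}$: there are four subintervals $E_{s}$ each with its own formula for $r_{\mu}$, together with the auxiliary possibility $i^{\ast}=N+1$ allowed by Proposition~\ref{LenBeta}. The computations are individually trivial once one observes the simplification $\nu_{u-1}+n(d+u-2)=N+u-1$, but organizing them compactly so that each location is ruled out by an inequality that fails for $u\geq 3$ is where the only real work lies.
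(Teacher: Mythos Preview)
Your argument is correct, but it takes a route dual to the paper's. Both proofs run the same downward induction on $u$ from $K+1$ to $3$; the difference is in what is pinned down at each step. The paper fixes the \emph{position} $\nu_{u-1}=\max I_{u}$ and determines the \emph{value} $\beta_{\nu_{u-1}}$ there: assuming $\beta_{\nu_{u-1}}=d+s-2$ with $s\leq u$, the rank estimate $r_{\beta}(\nu_{u-1})\leq\nu_{u-1}+\nu_{s}$ (coming from $\beta^{+}=\lambda$) combined with $R_{\mu}=R_{\beta}$ forces $s=u$, and then $r_{\beta}(\nu_{u-1})=\nu_{u-1}$ immediately yields $\beta_{i}=d+u-2$ throughout $I_{u}$. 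You instead fix the \emph{value} $d+u-2$ and determine the \emph{position} $i^{\ast}=\max C_{u}$: the identity $r_{\beta}(i^{\ast})=\nu_{u-1}$ together with a location-by-location check against the explicit formulas for $r_{\mu}$ on $I_{w}$, $E_{1},\dots,E_{4}$ and at $N+1$ forces $i^{\ast}=\nu_{u-1}$, after which a cardinality count gives $C_{u}=I_{u}$. The paper's approach is shorter because it bypasses the case analysis over the $E_{s}$ entirely via a single inequality; your approach trades that inequality for five small linear checks, each of which (as you note) is trivial once one uses $\nu_{u-1}+n(d+u-2)=N+u-1$. Both arguments rely on the preceding result $\beta^{+}=\lambda$ and on Proposition~\ref{LenBeta} to bound $\ell(\beta)\leq N+1$.
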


\begin{proof}We proceed as in the proof of Proposition~\ref{beta1}.

Consider $R_{\mu}(\nu_{K})=\nu_{K}+n(d+K-1)=r_{\beta}(\nu_{K})+n\beta_{\nu_{K}}$. Suppose $\beta_{\nu_{K}}=d+u-2$ and $u<K+1$. Notice that the value $\beta_{\nu_{K}}=0$ can not occur by Corollary~\ref{Cor52}. Since $\beta^{+}=\lambda$, $\# \{ s\colon \beta_{s}>d+u-2\}=\nu_{u}$. Then, by definition,
\begin{align*}
r_{\beta}(\nu_{K})& =\# \{ s\colon s\leq\nu_{K},\,\beta_{s}=d+u-2 \}+\# \{ s\colon \beta_{s}>d+u-2 \}\\
& =\# \{ s\colon s\leq\nu_{K},\,\beta_{s}=d+u-2 \}+\nu_{u}\leq\nu_{K}+\nu_{u},
\end{align*}
and thus $\nu_{K}+n(d+K-1)-n(d+u-2)\leq\nu_{K}+\nu_{u}$, which simplifies to $n+K-u\leq\nu_{K}$ and implies $K-u\leq\nu_{K}-n\leq-1$.

The bound $u\geq K+1$ contradicts that $\beta_{\nu_{K}}<\mu_{\nu_{K}}$.
Therefore, $\beta_{\nu_{K}}=d+K-1$ and $r_{\beta}(\nu_{K})=\nu_{K}$. The hypothesis $\mu\trianglerighteq\beta$ implies $\beta_{i}\leq d+K-1$ and so, $\beta_{i}=d+K-1$ for $1\leq i\leq\nu_{K}$.

Arguing inductively, suppose $\beta_{i}=\mu_{i}$ for $i\in\bigcup\limits_{s=u+1}^{K+1}I_{s}$. The possible nonzero values of $\beta$ on $I_{u}$ are $d+s-2$ for $2\leq s\leq u$. Consider $R_{\beta}(\nu_{u-1})=R_{\mu}(\nu_{u-1})=\nu_{u-1}+n(d+u-2)$. A~similar argument shows $\beta_{i}=d+u-2$, for $i\in I_{u}$.
\end{proof}

It remains to consider the set $\{ i\colon \beta_{i}=d\}$.

\begin{Lemma}\label{LastLemma} Consider $B_{d}= \{ i\colon i\leq N,\, \beta_{i}=d \}$ and $B_{0}= \{ i\colon i\leq N,\, \beta_{i}=0 \}$ such that $B_{d}\cup B_{0}=I_1\cup I_2$. Then, $B_{d}=E_{1}\cup E_{3}$ and $B_{0}=E_{2}\cup E_{4}$, or $B_{d}=E_{1}\cup E_{4}$ and $B_{0}=E_{2}\cup E_{3}$.
\end{Lemma}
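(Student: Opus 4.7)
The plan is to mirror the argument of Lemma~\ref{meetE2E3}, which handled the analogous dichotomy for $j>1$, by enumerating the joint states $(\mathbf{b},\mathbf{b}')$ of $(B_d,B_0)$ and eliminating every configuration save the two in the conclusion. The starting data are already in place: Proposition~\ref{LenBeta} gives $\#B_0\in\{nd-1,nd\}$, and since $B_d\cup B_0=I_1\cup I_2$ has $nd+n-2$ elements, $\#B_d\in\{n-1,n-2\}$; moreover $\min E_2\in B_0$, so $\mathbf{b}'$ has a $1$ in its second coordinate.

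First I would show $E_2\subseteq B_0$ and $E_1\subseteq B_d$. Suppose $B_d\cap E_2\neq\varnothing$; then both $B_d$ and $B_0$ carry state $(\ast,1,\ast,\ast)$, and the rank-adjacency relations of Proposition~\ref{Prop:Casesb18} applied to each set force their intersections with $E_2$ to occupy complementary sub-intervals whose neighbours in $E_1$ and $E_3$ are tightly constrained; combining these with the size bound $\#B_d\leq n-1$ coming from Proposition~\ref{Prop:CasesSizeB} produces a contradiction. A symmetric argument, leaning on Lemma~\ref{noteq0} (which rules out $\beta_i=\beta_{i_0}=0$ for $i<\min E_2$, hence on $E_1$), establishes $E_1\subseteq B_d$.

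Once these inclusions are secured, the only freedom left is how $E_3\cup E_4$ splits between $B_d$ and $B_0$. Direct size counting rules out $\mathbf{b}=(1,0,1,1)$ since this would force $\#B_d\geq\#E_1+\#E_3+\#E_4=n+k-2>n-1$ whenever $k\geq 2$; and $\mathbf{b}=(1,0,0,0)$ gives $\#B_0=\#E_2+\#E_3+\#E_4=nd+k-1>nd$ when $k\geq 2$. The degenerate values $k=1$ and $k=\tau_2$ collapse these states into one of the two asserted partitions, leaving exactly $(\mathbf{b},\mathbf{b}')=((1,0,1,0),(0,1,0,1))$ and $((1,0,0,1),(0,1,1,0))$, corresponding to $B_d=E_1\cup E_3,\ B_0=E_2\cup E_4$ and $B_d=E_1\cup E_4,\ B_0=E_2\cup E_3$ respectively.

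The main obstacle I anticipate is the careful handling of the degenerate boundary cases where one of $E_1,E_3,E_4$ is empty ($k=\tau_2$, $k=0$, or $k=1$), because the state labels become ambiguous and the bookkeeping must collapse those corner cases into one of the two asserted configurations without introducing a spurious third possibility; threading the size bounds, the rank-adjacency constraints from Proposition~\ref{Prop:Casesb18}, and the hypothesis $B_d\cup B_0=I_1\cup I_2$ together cleanly is where the principal technical work lies.
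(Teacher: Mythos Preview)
Your plan is essentially the paper's: both arguments mirror Lemma~\ref{meetE2E3} and pivot on the size bounds collected in Proposition~\ref{Prop:CasesSizeB}. The paper is a bit more direct---it splits on whether $\#B_d\le n-2$ (forcing $\#B_0\ge nd$, hence $B_0=E_2\cup E_3$ by the unique state attaining $nd$) or $\#B_d=n-1$, and in the latter case enumerates the few states of $B_d$ allowing equality and eliminates the bad pair $(1110)/(0111)$ via the $(\ast11\ast)$ adjacency. Your reorganisation, fixing $E_1\subseteq B_d$ first via Lemma~\ref{noteq0}, is a pleasant shortcut.

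There is, however, a genuine gap in your Step~3. The state $\mathbf{b}=(1,0,1,1)$ only records that $B_d$ \emph{meets} $E_1,E_3,E_4$; it does not say $B_d$ \emph{contains} them, so the inequality $\#B_d\ge\#E_1+\#E_3+\#E_4$ is unjustified as written. By Lemma~\ref{interval} each of $B_d\cap E_s$ and $B_0\cap E_s$ is an interval, but two complementary sub-intervals of $E_s$ can both be proper and nonempty. What actually forces the all-or-nothing behaviour on $E_3$ is the rank-adjacency: if both $B_d$ and $B_0$ met $E_3$ then, using $(101\ast)$ for $B_d$ (with $b_2=\nu_1-k$) and $(\ast11\ast)$ for $B_0$ (with $b_1'=\max E_2$), each would be forced to start its $E_3$-interval at $\min E_3$, a contradiction. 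Once you insert this one-line argument your size count goes through; without it the step does not. Similarly, your sketch for $E_2\subseteq B_0$ (``tightly constrained\ldots contradiction'') should be made concrete: from $E_1\subseteq B_d$ the state $(11\ast\ast)$ for $B_d$ gives $s_2=k$, hence $t_1'=k-2$ and $a_1'=N-2k+2>\max E_2$, which is the contradiction you want.
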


\begin{proof}If $\#B_{d}\leq n-2$, then $\#B_{0}\geq dn$ implying $B_{0}=E_{2}\cup E_{3}$. Let $\mathbf{b}$ be the configuration for~$B_{d}$ and $\mathbf{b}^{\prime}$ be the configuration for~$B_{0}$.

The cases $\mathbf{b}=(1010)$ and $(1110)$ allow $\#B_{d}=n-1$ and imply $E_{4}\subset B_{0}$ and $\mathbf{b}^{\prime}=(0111)$ or $(0101)$.

The case $(1000)$ is excluded because $\#E_{1}=\tau_{2}-k\leq\tau_{2}-1$, as well as the case $(0001)$, because $\#E_{4}=k-1\leq n-2$. Finally, $\mathbf{b}=(1110)$ and $\mathbf{b}^{\prime}=(0111)$ can not occur because the state $(\ast11\ast)$ implies $t_{2}^{\prime}=s_{1}^{\prime}$ and $\beta_{N-2k+1}=d=\beta_{N-2k+2}=0$.
\end{proof}

We are ready to prove the analogous result to Theorems~\ref{FinalThm1} and~\ref{FinalThm2} for $j=1$.

\begin{Theorem}If $j=1$, $\mu\trianglerighteq\beta$ and $R_{\mu}=R_{\beta}$ then either $\beta=\mu$ or $\ell(\beta)=N+1$ and $\beta$ is unique.
\end{Theorem}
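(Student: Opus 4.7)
The plan is to combine the results already established in this subsection and then resolve the two configurations supplied by Lemma~\ref{LastLemma}. By the preceding propositions, $\beta^{+}=\lambda$, $\beta_{i}=\mu_{i}$ on each interval $I_{u}$ with $u>2$, and the nonzero entries of $\beta$ on $I_{1}\cup I_{2}$ are all equal to $d$. Lemma~\ref{LastLemma} then leaves exactly two configurations on $I_{1}\cup I_{2}$: either $B_{d}=E_{1}\cup E_{3}$ and $B_{0}=E_{2}\cup E_{4}$, matching $\mu$, or $B_{d}=E_{1}\cup E_{4}$ and $B_{0}=E_{2}\cup E_{3}$. The only remaining freedom is the possible value of $\beta_{N+1}$, controlled by Proposition~\ref{LenBeta}.

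In the first configuration, $\beta$ coincides with $\mu$ on $[1,N]$ and $\#B_{0}=(nd-k)+(k-1)=nd-1$. Proposition~\ref{LenBeta} then forces $\ell(\beta)\leq N$, so $\beta_{i}=0$ for $i>N$, and hence $\beta=\mu$.

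In the second configuration, $\#B_{0}=(nd-k)+k=nd$, so Proposition~\ref{LenBeta} forces $\ell(\beta)=N+1$ and the sole remaining task is to identify the extra entry $\beta_{N+1}$. I evaluate the rank equation at $i=N+1$: since $\ell(\mu)=N-k+1<N+1$, we have $R_{\mu}(N+1)=N+1$, so $r_{\beta}(N+1)+n\beta_{N+1}=N+1$. Writing $\beta_{N+1}=c=d+u-2$ with $u\geq 2$ (using $\beta^{+}=\lambda$), one computes $r_{\beta}(N+1)=\nu_{u-1}$, the number of parts of $\lambda$ that are $\geq c$; substituting $\nu_{u-1}=N-(nd-1)-(u-2)(n-1)$ collapses the rank equation to $u=2$. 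Hence $\beta_{N+1}=d$, uniquely determining $\beta$. The domination $\mu\trianglerighteq\beta$ is an easy partial-sum check, since $\mu$ and $\beta$ are permutations of $\lambda$ that agree up to $\min E_{3}-1$, with $\mu$ gaining $d$'s on $E_{3}$ while $\beta$ gains them only on $E_{4}\cup\{N+1\}$, so the partial sums match at $N+1$ and $\mu$'s dominate earlier. The main technical obstacles have been absorbed into Lemma~\ref{LastLemma} and Proposition~\ref{LenBeta}; the only substantive remaining work is the short rank-equation calculation at $i=N+1$.
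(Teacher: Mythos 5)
Your proposal is correct and follows essentially the same route as the paper: invoke Lemma~\ref{LastLemma} to reduce to the two configurations, identify the first with $\beta=\mu$, and in the second use the cardinality $\#B_{0}=nd$ together with Proposition~\ref{LenBeta} to force $\ell(\beta)=N+1$. Your explicit rank-equation computation showing $\beta_{N+1}=d$ is a welcome elaboration of a detail the paper leaves implicit (the paper instead counts that $\{i\colon\beta_{i}=d\}$ must be $E_{1}\cup E_{4}\cup\{N+1\}$ because the values exceeding $d$ are already pinned to the intervals $I_{u}$ with $u>2$), but it does not change the structure of the argument.
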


\begin{proof}By Lemma~\ref{LastLemma}, if $\{i\colon i\leq N,\,\beta_{i}=d\}=E_{1}\cup E_{3}$ and $\{i\colon i\leq N,\,\beta_{i}=0\}=E_{2}\cup E_{4}$, then $\beta=\mu$. Otherwise $\{ i\colon \beta_{i}=d\}=E_{1}\cup E_{4}\cup\{ N+1\}$, which has cardinality $n-1$, and $\{i\colon i\leq N,\beta_{i}=0\}=E_{2}\cup E_{3}$, with cardinality~$nd$.
\end{proof}

We finish this section illustrating our results with an example.
\begin{example*}
Consider the parameters $j=1$, $K=2$, $n=4$, $d=3$, and $N=17$, for which $\mu=\big(4,4,4,3,0^{10},3,3,0\big)$ and $\beta=\big(4,4,4,3,0^{12},3,3\big)$, with $\ell(\beta)=18$.
\end{example*}

\section{Concluding remarks}\label{Sec7}

We have shown that if $\S \in\mathsf{Tab}_{\tau}$ with $\mathsf{col}_{\mathbb{S}}[i] = \mathsf{col}_{\mathbb{S}}[i+1]=k$ and $\mathsf{row}_{\mathbb{S}}[i+1]=j$, then the polynomials $M_{\alpha (\S s_{i} )}$ and $M_{\alpha (\Theta_{j,k} )}$ are $\varpi$-equipolar for
$S\in\mathsf{Tab}_{\tau}$ and $M_{\alpha (\S s_{i} )}$ has no pole at~$\varpi$ in~$N$ variables. Hence, the polynomials $M_{\alpha(\S)}$, for $S\in\mathsf{Tab}_{\tau}$ specialized to~$\varpi$ satisfy the equations $M_{\alpha(\S)}\xi_{i}=M_{\alpha (\S)}\phi_{i}$ for all~$i$, and are singular.

The result on critical pairs provides a new proof for singular nonsymmetric Jack polynomials with the restriction $\gcd (m,n)=1$; then the quasistaircase polynomials are singular for $\kappa=-\frac{m}{n}$ (see~\cite{D2005}). Considering the known singular nonsymmetric Jack polynomials theory we suspect that there are no singular Macdonald polynomials other than the quasistaircase types constructed in this paper. This may be quite harder to prove, if true.

We also want to point out that there is a different behavior for partitions with only two parts, in the sense that there may be more than one quasistaircase for a given $\tau$ and $(m,n)$.
That is, consider the case $K=1$, for which necessarily $j=1$. Then, $\tau= (N-\tau_{2},\tau_{2})$ with $\tau_{2}\leq N/2$, and $\lambda=\big(m^{\tau_{2}},0^{N-\tau_{2}}\big)$, with $n=N-\tau_{2}+1$. We want to figure out the values of $\omega$ for which $\varpi=\big(\omega u^{-n},u^{m}\big)$ provides singular polynomials. Let $g=\gcd (m,n )$ and $d$ be a factor of~$g$.

To produce a quasistaircase, set $n=dn_{1},m=dm_{1}$ subject to $\tau_{2}\leq n_{1}-1$. That is, $\frac{n}{d}\geq\tau_{2}+1$, or $d\leq \frac{n}{\tau_{2}+1}$. Then, let $\omega=\exp\big(\frac{2\pi k\mathrm{i}}{m}\big)$ with $\gcd(g,k)=d$. As a result $(q,t)=\big(\omega u^{-n},u^{m}\big)$ satisfies
$q^{m/d}t^{n/d}=1$. This formula is based on replacing $m$, $n$, and $g$ by $\frac{m}{d}$, $\frac{n}{d}$, and $\frac{g}{d}$, respectively, and setting $k=k^{\prime}d$, with $\gcd\big(k^{\prime},\frac{g}{d}\big)=1$.

We wrap up the paper with a last example illustrating all the study done here.
\begin{example*}
Let $\lambda=\big(30^{3},0^{11}\big)$, for which $N=14$, $n=12$, and $\tau_{2}=3$. Then, $\gcd(30,12)=6$ and $d$ is a factor of $6$ such that $d\leq\dfrac{12}{4}=3$. Thus, $\omega=\exp\big(\frac{2\pi k\mathrm{i}}{30}\big)$, with $\gcd (k,6)=1$, 2, or 3, resulting in the singular values $q^{30}t^{12}=1$, $q^{15}t^{6}=1$, and $q^{10}t^{4}=1$. In terms of $\varpi$ the implication is that $\varpi=\big(\omega u^{-2},u^{5}\big)$ where
\begin{enumerate}\itemsep=0pt
\item[1)] $\omega^{2}-\omega+1=0$ (primitive $6^{\rm th}$ root of unity) and $q^{30}t^{12}=1$;
\item[2)] $\omega^{2}+\omega+1=0$ (primitive $3^{\rm rd}$ root of unity) and $q^{15}t^{6}=1$;
\item[3)] $\omega+1=0$ (primitive square root of unity) and $q^{10}t^{4}=1$.
\end{enumerate}

Note that the fact that $\omega=1$ is specifically excluded is a manifestation of the result that the nonsymmetric Jack polynomial with label $\big(30^{3},0^{11}\big)$ is not singular for $\kappa=-30/12=-5/2$, and so it may have poles. The known results in~\cite{D2005} assert that for every pair
$(m,n)$ with $2\leq n\leq14$ and $m=1, 2, 3,\ldots$ such that $\dfrac{m}{n}\notin\mathbb{Z}$ there is a nonsymmetric Jack polynomial singular for $\kappa=-\frac{m}{n}$. In our case, for the pair $(30,12)$ the corresponding label is $\big(40,35,30,0^{11}\big)$.
That is, $\big(30^{3},0^{11}\big)$ is not a valid label for singular Jack polynomials.
\end{example*}

\subsection*{Acknowledgements}

The authors would like to thank Jean-Gabriel Luque for his fruitful discussions and his colla\-bo\-ration during the previous years. They also thank the referees for their careful reading and suggestions on improving the presentation.

\pdfbookmark[1]{References}{ref}
\LastPageEnding


\begin{thebibliography}{99}
\footnotesize\itemsep=0pt

\bibitem{BF1997}
Baker T.H., Forrester P.J., A {$q$}-analogue of the type {$A$} {D}unkl operator
 and integral kernel, \href{https://doi.org/10.1155/S1073792897000445}{\textit{Int. Math. Res. Not.}} \textbf{1997} (1997), 667--686.

\bibitem{CDL2019}
Colmenarejo L., Dunkl C.F., Luque J.-G., Connections between vector-valued and
 highest weight {J}ack and {M}acdonald polynomials, \href{https://arxiv.org/abs/1907.04631}{arXiv:1907.04631}.

\bibitem{DJ1986}
Dipper R., James G., Representations of {H}ecke algebras of general linear
 groups, \href{https://doi.org/10.1112/plms/s3-52.1.20}{\textit{Proc. London Math. Soc.}} \textbf{52} (1986), 20--52.

\bibitem{D2005}
Dunkl C.F., Singular polynomials for the symmetric groups, \href{https://doi.org/10.1155/S1073792804140610}{\textit{Int. Math.
 Res. Not.}} \textbf{2004} (2004), 3607--3635, \href{https://arxiv.org/abs/math.RT/0403277}{arXiv:math.RT/0403277}.

\bibitem{D05}
Dunkl C.F., Singular polynomials and modules for the symmetric groups,
 \href{https://doi.org/10.1155/IMRN.2005.2409}{\textit{Int. Math. Res. Not.}} \textbf{2005} (2005), 2409--2436, \href{https://arxiv.org/abs/math.RT/0501494}{arXiv:math.RT/0501494}.

\bibitem{D2007}
Dunkl C.F., Hook-lengths and pairs of compositions, \href{https://doi.org/10.1016/j.cam.2005.05.038}{\textit{J.~Comput. Appl.
 Math.}} \textbf{199} (2007), 39--47, \href{https://arxiv.org/abs/math.CO/0410466}{arXiv:math.CO/0410466}.

\bibitem{DL2011}
Dunkl C.F., Luque J.-G., Vector-valued {J}ack polynomials from scratch,
 \href{https://doi.org/10.3842/SIGMA.2011.026}{\textit{SIGMA}} \textbf{7} (2011), 026, 48~pages, \href{https://arxiv.org/abs/1009.2366}{arXiv:1009.2366}.

\bibitem{DL2012}
Dunkl C.F., Luque J.-G., Vector valued {M}acdonald polynomials, \textit{S\'{e}m.
 Lothar. Combin.} \textbf{66} (2012), Art.~B66b, 68~pages, \href{https://arxiv.org/abs/1106.0875}{arXiv:1106.0875}.

\bibitem{DL2015}
Dunkl C.F., Luque J.-G., Clustering properties of rectangular {M}acdonald
 polynomials, \href{https://doi.org/10.4171/AIHPD/19}{\textit{Ann. Inst. Henri Poincar\'{e}~D}} \textbf{2} (2015), 263--307, \href{https://arxiv.org/abs/1204.5117}{arXiv:1204.5117}.

\bibitem{JL2011}
Jolicoeur Th., Luque J.-G., Highest weight {M}acdonald and {J}ack polynomials,
 \href{https://doi.org/10.1088/1751-8113/44/5/055204}{\textit{J.~Phys.~A: Math. Theor.}} \textbf{44} (2011), 055204, 21~pages, \href{https://arxiv.org/abs/1003.4858}{arXiv:1003.4858}.

\bibitem{KS1997}
Knop F., Sahi S., A recursion and a combinatorial formula for {J}ack
 polynomials, \href{https://doi.org/10.1007/s002220050134}{\textit{Invent. Math.}} \textbf{128} (1997), 9--22, \href{https://arxiv.org/abs/q-alg/9610016}{arXiv:q-alg/9610016}.

\bibitem{Macd1995}
Macdonald I.G., Symmetric functions and {H}all polynomials, 2nd~ed., \textit{Oxford
 Mathematical Monographs}, The Clarendon Press, Oxford University Press, New
 York, 1995.

\bibitem{S1999}
Stanley R.P., Enumerative combinatorics, {V}ol.~2, \textit{Cambridge Studies in
 Advanced Mathematics}, Vol.~62, \href{https://doi.org/10.1017/CBO9780511609589}{Cambridge University Press}, Cambridge, 1999.

\end{thebibliography}
\end{document}